\newif\ifpictures
\definecolor{DarkBlue}{rgb}{0,0.2,0.6}
\newcommand{\struc}[1]{{\color{DarkBlue} #1}}
\numberwithin{equation}{section}
\newtheorem{thm}{Theorem}
\newtheorem{prop}[thm]{Proposition}
\newtheorem{lemma}[thm]{Lemma}
\newtheorem{cor}[thm]{Corollary}
\newtheorem{prob}[thm]{Problem}
\numberwithin{thm}{section}
\theoremstyle{definition}
\newtheorem{defn}[thm]{Definition}
\newtheorem*{defn*}{Definition}
\newtheorem{exa}[thm]{Example}
\newtheorem*{exa*}{Example}
\newcounter{FNC}[page]
\def\newfootnote#1{{\addtocounter{FNC}{2}$^\fnsymbol{FNC}$%
     \let\thefootnote\relax\footnotetext{$^\fnsymbol{FNC}$#1}}}
\newcommand{\C}{\mathbb{C}}
\newcommand{\F}{\mathbb{F}}
\newcommand{\K}{\mathbb{K}}
\newcommand{\N}{\mathbb{N}}
\renewcommand{\P}{\mathbb{P}}
\newcommand{\Q}{\mathbb{Q}}
\newcommand{\R}{\mathbb{R}}
\newcommand{\Z}{\mathbb{Z}}
\newcommand\cA{{\ensuremath{\mathcal{A}}}\xspace}
\newcommand\cB{{\ensuremath{\mathcal{B}}}\xspace}
\newcommand\cC{{\ensuremath{\mathcal{C}}}\xspace}
\newcommand\cM{{\ensuremath{\mathcal{M}}}\xspace}
\newcommand\cR{{\ensuremath{\mathcal{R}}}\xspace}
\newcommand\cS{{\ensuremath{\mathcal{S}}}\xspace}
\newcommand\cT{{\ensuremath{\mathcal{T}}}\xspace}
\newcommand\cV{{\ensuremath{\mathcal{V}}}\xspace}
\newcommand{\eps}{\varepsilon}
\newcommand{\alp}{\alpha}
\newcommand{\lam}{\lambda}
\newcommand{\lf}{\left}
\newcommand{\ri}{\right}
\newcommand{\ra}{\rightarrow}
\newcommand{\Lera}{\Leftrightarrow}
\newcommand{\ovl}{\overline}
\DeclareMathOperator{\trop}{trop}
\DeclareMathOperator{\Arg}{Arg}
\DeclareMathOperator{\Log}{Log}
\DeclareMathOperator{\New}{New}
\DeclareMathOperator{\vol}{vol}
\DeclareMathOperator{\re}{re}
\DeclareMathOperator{\im}{im}
\DeclareMathOperator{\ord}{ord}
\DeclareMathOperator{\RE}{Re}
\DeclareMathOperator{\IM}{Im}
\DeclareMathOperator{\val}{val}
\DeclareMathOperator{\Ker}{Ker}
\DeclareMathOperator{\lt}{lt}
\DeclareMathOperator{\area}{area}
\DeclareMathOperator{\archtrop}{archtrop}
\DeclareMathOperator{\SL}{SL}
\newcommand{\Fa}[1][|\mathbf{z}|]{f^{#1}}
\def\endenvi{\hfill$\hexagon$}
\title{The Boundary of Amoebas}
\author{Franziska Schroeter}
\author{Timo de Wolff}
\address{Franziska Schroeter, Universit\"at Hamburg, Analysis und Differentialgeometrie, Fachbereich Mathematik, Bundesstr. 55,
 20146 Hamburg, Germany \medskip \newline 
\hspace*{8pt} Timo de Wolff, Texas A\&M University, Department of 
Mathematics, College Station, TX 77843-3368, 
 USA \medskip
}
\email{franziska.schroeter@uni-hamburg.de, dewolff@tamu.math.edu}
\subjclass[2010]{14P25, 14Q10 (primary), 14T05, 51H25, 51H30 (secondary)}
\keywords{Amoeba, amoeba basis, boundary of amoeba, computation of amoeba, contour of amoeba, logarithmic Gau\ss \, map, parameter space}
\begin{document}

\begin{abstract}
The computation of amoebas has been a challenging open problem for the last dozen years. The most natural 
approach, namely to compute an amoeba via its boundary has not been practical so far since only a superset of the boundary, the contour, is understood in theory and computable in practice.

We define and characterize the extended boundary of an amoeba, which is sensitive to some degenerations that the topological boundary does not detect. Our description of the extended boundary also allows us
to distinguish between the contour and the boundary. This gives rise not only to new 
structural results in amoeba theory, but in particular allows us to compute hypersurface 
amoebas via their boundary in any dimension. In dimension two this can be done
using Gr\"obner bases alone.

We introduce the concept of amoeba bases, which are sufficient for understanding the amoeba of an ideal. We show that our characterization of the boundary is essential for the computation of these amoeba bases and we illustrate the potential of this concept by constructing amoeba bases for linear systems of equations.
\end{abstract}

\maketitle

\section{Introduction}
\label{Sec:Introduction}
Let $\struc{f} \in \struc{\C[\mathbf{z}^{\pm 1}]} = \C[z_1^{\pm 1},\ldots,z_n^{\pm 1}]$ be a multivariate Laurent
polynomial defining a non-singular hypersurface $\struc{\cV(f)} \subset \struc{(\C^*)^n} = (\C \setminus \{0\})^n$. The
\struc{\textit{amoeba} $\cA(f)$} of $f$, introduced by Gelfand, Kapranov, and Zelevinsky in 1993
\cite{Gelfand:Kapranov:Zelevinsky}, is the image of $\cV(f)$ under the $\Log$ absolute value map given
by \begin{eqnarray}
\label{Equ:LogMap}
	\struc{\Log|\cdot|}: \ \lf(\C^*\ri)^n \to \R^n, \quad (z_1,\ldots,z_n) \mapsto (\log|z_1|,
\ldots, \log|z_n|) \, .
\end{eqnarray}

\noindent Amoebas are objects with an amazing amount of structural properties, which have
been intensively studied during the last 20 years. In particular, amoebas reveal an
intrinsic connection between classical algebraic geometry and tropical geometry
\cite{Einsiedler:Kapranov:Lind,Maclagan:Sturmfels,Maslov}. Moreover, they
appear in various other fields of mathematics such as
complex analysis \cite{Forsberg:Passare:Tsikh,Passare:Rullgard:Spine}, real algebraic 
geometry \cite{Iliman:deWolff:Circuits,Mikhalkin:Annals}, and statistical thermodynamics \cite{Passare:Pochekutov:Tsikh}. Expository writings on amoeba theory see \cite{deWolff:Diss,Gelfand:Kapranov:Zelevinsky,Mikhalkin:Survey,Passare:Tsikh:Survey,Rullgard:Diss}.

Gelfand, Kapranov, and Zelevinsky observed that every connected component of the complement
of an amoeba is an open, convex set \cite[p.\:195, Prop.\:1.5 and 
Cor.\:1.6]{Gelfand:Kapranov:Zelevinsky}; see also \cite[Prop. 
1.2]{Forsberg:Passare:Tsikh}.
This means that the amoeba itself has a boundary (note that the amoeba can also be 
compactified if necessary, e.g. via toric compactification 
\cite{Gelfand:Kapranov:Zelevinsky,Mikhalkin:Annals}). With exception of the 
linear case \cite{Forsberg:Passare:Tsikh} this boundary is very hard to describe and no 
explicit characterization is known so far.

For many applications it is essential to compute or at least 
approximate amoebas. The computation and approximation of amoebas was pioneered
by Theobald \cite{Theobald:ComputingAmoebas}, where an approximation of the
\struc{\textit{contour}}, a superset of the topological boundary, in the plane is described, using
a result by Mikhalkin; see Theorem \ref{Thm:MikhalkinContour}. Later, approximation methods of amoebas based on (algebraic)
certificates were given by Purbhoo \cite{Purbhoo} using iterated
resultants, by Theobald and the second author \cite{Theobald:deWolff:SOS} via semidefinite
programming (SDP) and sums of squares (SOS), and by Avenda\~{n}o, Kogan,
Nisse and Rojas \cite{Nisse:Rojas:et:al} via tropical geometry. We compare these
approaches in Section \ref{Sec:Computation} in detail. Here, we only point out that
none of these methods can be used to compute the boundary of an amoeba, decide 
membership of a point in an amoeba exactly (only non-membership can be certified so far), or give 
general degree bounds on certificates. In this article we provide an explicit 
characterization of the \struc{\textit{extended boundary}} of an amoeba (Corollary 
\ref{Cor:FiberComputation}), which can be checked algorithmically. Particularly, membership in the extended boundary, the contour and the amoeba itself can be decided exactly. Moreover, the entire boundary of an amoeba, and therefore also the amoeba itself, can be approximated in this way, see Theorem \ref{Thm:AmoebaComputation}.\\

The \struc{\textit{contour} $\cC(f)$} of the amoeba $\cA(f)$ is defined as the $\Log|\cdot|$ image of the set of 
points of $\cV(f)$, which are critical with respect to the $\Log|\cdot|$ map, see Section \ref{SubSec:Contour} for details. Mikhalkin's results \cite{Mikhalkin:Annals, Mikhalkin:PairsOfPants} state
that the critical points of $\Log|\cdot|$ restricted to $\cV(f)$, and thus also the contour $\cC(f)$, are given by the
points with a real image under the \textit{logarithmic Gau\ss{} map}. The 
\struc{\textit{logarithmic Gau\ss{} map}}, introduced by Kapranov \cite{Kapranov:LogGaussMap}, is a 
composition of a branch of the holomorphic logarithm of
each coordinate with the usual Gau\ss{} map, see more details in Section \ref{SubSec:GaussMap}. It is 
given by
\begin{eqnarray*}
	\struc{\gamma}: \cV(f) \ra \P_{\C}^{n-1}, \quad \mathbf{z}=(z_1,\ldots,z_n) \mapsto
\lf(z_1 \cdot \frac{\partial f}{\partial z_1}(\mathbf{z}):\cdots:z_n \cdot \frac{\partial
f}{\partial
z_n}(\mathbf{z})\ri).
\end{eqnarray*}

For a given hypersurface $\cV(f)$ we define the set $S(f)$ by 
\begin{eqnarray*}
	\struc{S(f)}	& =	& \{\mathbf{z} \in \cV(f) \ : \ \gamma(\mathbf{z}) \in
\P_{\R}^{n-1} \subset \P_{\C}^{n-1}\}.
\end{eqnarray*}

\begin{thm}[Mikhalkin \cite{Mikhalkin:Annals,Mikhalkin:PairsOfPants}]
Let $f \in \C[\mathbf{z}^{\pm 1}]$ with $\cV(f) \subset (\C^*)^n$. Then the set of
critical points of the $\Log|\cdot|$ map equals $S(f)$ and thus it follows that $\cC(f)=\Log|S(f)|$.
\label{Thm:MikhalkinContour}
\end{thm}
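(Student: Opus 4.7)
My plan is to move the problem into logarithmic coordinates, so that $\Log|\cdot|$ becomes the $\R$-linear real-part map, and then reduce the critical-point condition to a purely linear-algebraic question about complex hyperplanes in $\C^n$.

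First I would fix a point $\mathbf{z}\in\cV(f)$ and, on a neighborhood in $(\C^*)^n$, choose holomorphic branches $w_j=\log z_j$. In these coordinates the map $\Log|\cdot|$ is just $\RE:\C^n\to\R^n$, $\mathbf{w}\mapsto(\RE w_1,\ldots,\RE w_n)$, and setting $F(\mathbf{w})=f(e^{w_1},\ldots,e^{w_n})$ gives $\partial F/\partial w_j = z_j\,\partial f/\partial z_j$. Hence the tangent space $T_{\mathbf{z}}\cV(f)$, expressed in the $\mathbf{w}$-chart, is the complex hyperplane
\begin{equation*}
H_{\mathbf{z}} \;=\; \lf\{\mathbf{v}\in\C^n \ : \ \sum_{j=1}^n z_j\tfrac{\partial f}{\partial z_j}(\mathbf{z})\,v_j \;=\; 0\ri\},
\end{equation*}
whose complex normal (up to scaling) is exactly the vector representing $\gamma(\mathbf{z})\in\P^{n-1}_\C$. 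Since $\Log|\cdot|$ in these coordinates is $\R$-linear, the differential of $\Log|\cdot||_{\cV(f)}$ at $\mathbf{z}$ is the restriction $\RE|_{H_\mathbf{z}}:H_\mathbf{z}\to\R^n$. Because $\dim_\R H_\mathbf{z}=2(n-1)\geq n$, the point $\mathbf{z}$ is critical precisely when $\RE|_{H_\mathbf{z}}$ has real rank strictly less than $n$.

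Next I would carry out the linear-algebra step: for a complex hyperplane $H=\{\mathbf{v}:\mathbf{a}\cdot\mathbf{v}=0\}\subset\C^n$ with $\mathbf{a}\in\C^n\setminus\{0\}$, the map $\RE|_H$ drops rank iff $\mathbf{a}\in\C^*\cdot\R^n$, i.e.\ iff $[\mathbf{a}]\in\P^{n-1}_\R$. The easy direction: if $\mathbf{a}$ is a complex multiple of a real vector $\mathbf{a}'\in\R^n$, then for every $\mathbf{v}\in H$ one has $\mathbf{a}'\cdot\mathbf{v}=0$, and taking real parts gives $\lan\mathbf{a}',\RE\mathbf{v}\ran=0$, which pins $\RE(H)$ inside a real hyperplane of $\R^n$. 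For the converse, suppose $\RE|_H$ has rank less than $n$; then some nonzero $\mathbf{b}\in\R^n$ satisfies $\RE(\mathbf{b}\cdot\mathbf{v})=0$ for all $\mathbf{v}\in H$. The space of $\R$-linear forms on $\C^n$ that vanish on $H$ is two-dimensional, spanned by $\mathbf{v}\mapsto\RE(\mathbf{a}\cdot\mathbf{v})$ and $\mathbf{v}\mapsto\IM(\mathbf{a}\cdot\mathbf{v})$, so there exist $\alpha,\beta\in\R$ with $\RE(\mathbf{b}\cdot\mathbf{v})=\RE\bigl((\alpha-i\beta)\mathbf{a}\cdot\mathbf{v}\bigr)$ for every $\mathbf{v}\in\C^n$; since $\RE(\mathbf{c}\cdot\mathbf{v})$ determines $\mathbf{c}$, this forces $\mathbf{b}=(\alpha-i\beta)\mathbf{a}$, so $\mathbf{a}\in\C^*\cdot\R^n$.

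Combining the two steps, $\mathbf{z}\in\cV(f)$ is critical for $\Log|\cdot|$ iff the normal $\gamma(\mathbf{z})$ of $H_\mathbf{z}$ lies in $\P^{n-1}_\R$, which is precisely the condition $\mathbf{z}\in S(f)$; the equality $\cC(f)=\Log|S(f)|$ then follows from the definition of the contour. The genuine content of the argument is the linear-algebra lemma, and the main subtlety there is keeping the $\C$-linear and $\R$-linear duals of $\C^n$ straight and arguing that the 2-dimensional $\R$-space of real forms vanishing on $H$ contains a real vector only when $\mathbf{a}$ itself is proportional to one; everything else is bookkeeping in the logarithmic chart, where the choice of branch of $\log$ is irrelevant because it only shifts $\mathbf{w}$ by imaginary constants and hence does not affect $\RE$ or the critical condition.
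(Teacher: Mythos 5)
Your proof is correct, but note that the paper does not supply its own proof of this theorem; it is stated as a cited result of Mikhalkin and used as a black box, so there is no in-paper proof to compare against. Your argument -- pass to a logarithmic chart where $\Log|\cdot|$ becomes $\RE$, identify the differential of $\Log|\cdot||_{\cV(f)}$ at $\mathbf{z}$ with $\RE$ restricted to the complex hyperplane $H_{\mathbf{z}}$ whose normal is $\gamma(\mathbf{z})$, and characterize rank deficiency of $\RE|_H$ by $\mathbf{a} \in \C^* \cdot \R^n$ -- is the standard route and is essentially Mikhalkin's own. It is worth noticing that the paper's nearby machinery is the realification viewpoint: the orthogonal decomposition in \eqref{Equ:Orthogonal} and the proof of Lemma \ref{Lem:CriticalPointsLogArg} decompose tangent directions in $\R^{2n}$ into the kernels of $\Log|\cdot|$ and $\Arg$, rather than working in a single complex log chart as you do. The two viewpoints are interchangeable, and your two spanning real forms $\mathbf{v}\mapsto\RE(\mathbf{a}\cdot\mathbf{v})$ and $\mathbf{v}\mapsto\IM(\mathbf{a}\cdot\mathbf{v})$ are precisely the $\Log|\cdot|$- and $\Arg$-components of the complex normal direction that \eqref{Equ:Orthogonal} separates; phrasing the rank-drop criterion through that decomposition is exactly what the paper does in Lemma \ref{Lem:CriticalPointsLogArg}. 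One small point worth stating explicitly: $H_{\mathbf{z}}$, and hence $\gamma(\mathbf{z})$, is defined only at smooth points of $\cV(f)$, so the argument implicitly uses the paper's standing nonsingularity assumption on $\cV(f)$; also, in the rank computation, the relevant threshold is $\min\{\dim_\R \cV(f), n\} = n$ because $n\geq 2$, which is why "rank $<n$" is the right criticality condition here.
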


\noindent For real $f$, this implies that the \struc{\textit{real locus} $\cV_\R(f)$}, i.e., the
set of real points in $\cV(f)$, is always contained in the contour $\cC(f)$.

Concerning the boundary of an amoeba, this theorem yields the following corollary. A point $\mathbf{w} \in \R^n$ may only be a boundary point of an amoeba $\cA(f)$ if there exists a point in the intersection of its fiber $\F_{\mathbf{w}}=\{\mathbf{z}\in
(\C^*)^n:~\Log|\mathbf{z}|=\mathbf{w}\}$ and the variety $\cV(f)$, which belongs to the
set $S(f)$.

\begin{cor}[Mikhalkin]
Let $f \in \C[\mathbf{z}^{\pm 1}]$ with $\cV(f) \subset (\C^*)^n$ and let $\mathbf{w} \in
\R^n$. Then
\begin{eqnarray*}
	& & \mathbf{w} \in \partial \cA(f) \, \text{ implies that } \, \F_{\mathbf{w}}
\cap \cV(f) \cap S(f) \neq \emptyset.
\end{eqnarray*}
\label{Cor:MikhalkinBoundary}
\end{cor}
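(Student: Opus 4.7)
The plan is to argue by contraposition: I will show that if $\F_{\mathbf{w}} \cap \cV(f) \cap S(f) = \emptyset$, then $\mathbf{w}$ lies in the topological interior of $\cA(f)$, which forces a point of $\partial \cA(f)$ to have a preimage in $S(f)$.

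First I would observe that, since $\cA(f)$ is closed in $\R^n$ (as the image of $\cV(f)$ under the proper--when--restricted map $\Log|\cdot|$), the boundary satisfies $\partial \cA(f) \subset \cA(f)$. Hence the assumption $\mathbf{w} \in \partial \cA(f)$ already provides at least one preimage $\mathbf{z} \in \F_{\mathbf{w}} \cap \cV(f)$. Now suppose toward a contradiction that no such preimage lies in $S(f)$; pick any $\mathbf{z} \in \F_{\mathbf{w}} \cap \cV(f)$. By Theorem \ref{Thm:MikhalkinContour}, the critical locus of $\Log|\cdot|$ restricted to the smooth real $(2n-2)$-manifold $\cV(f)$ equals $S(f)$, so $\mathbf{z}$ is a regular point of $\Log|\cdot|\colon \cV(f) \to \R^n$. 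By the definition of critical point recalled in the excerpt, this means that for every sufficiently small neighborhood $U$ of $\mathbf{z}$ in $\cV(f)$ one has $\dim \Log|U| = n$.

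Combined with smoothness, this amounts to saying that the differential of $\Log|\cdot|\colon \cV(f) \to \R^n$ has maximal rank $n$ at $\mathbf{z}$, so $\Log|\cdot|$ is a submersion on a neighborhood of $\mathbf{z}$. The submersion theorem (equivalently, the open mapping theorem for submersions, or the implicit function theorem) then produces an open neighborhood $U \subset \cV(f)$ of $\mathbf{z}$ whose image $\Log|U|$ is open in $\R^n$ and contains $\mathbf{w}$. Since $\Log|U| \subset \cA(f)$, this puts $\mathbf{w}$ in the interior of $\cA(f)$, contradicting $\mathbf{w} \in \partial \cA(f)$. We conclude $\F_{\mathbf{w}} \cap \cV(f) \cap S(f) \neq \emptyset$, as claimed.

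The only points that need care are the bookkeeping items rather than a hard obstacle: (i) invoking closedness of $\cA(f)$ to ensure the fiber over $\mathbf{w}$ actually meets $\cV(f)$, and (ii) translating the excerpt's definition of ``critical'' (drop in image dimension) into the rank condition required by the submersion theorem. With both in place, the corollary is essentially a direct consequence of Theorem \ref{Thm:MikhalkinContour} together with the open mapping property of submersions, and no finer analysis of the fiber $\F_{\mathbf{w}} \cap \cV(f)$ (which could a priori be higher--dimensional or noncompact in components) is needed, since producing interior points of $\cA(f)$ requires only one regular preimage.
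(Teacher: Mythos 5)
Your argument is correct and is the standard one the paper implicitly intends when it calls the inclusion $\partial \cA(f) \subset \cC(f)$ ``obvious'' and presents Corollary \ref{Cor:MikhalkinBoundary} as an immediate consequence of Theorem \ref{Thm:MikhalkinContour}; the paper gives no separate proof, so there is nothing to compare against beyond this implicit reasoning. The one step that deserves an explicit word is the one you flag yourself at the end: passing from ``$\mathbf{z}\notin S(f)$'' to ``$d(\Log|\cdot|)$ has rank $n$ at $\mathbf{z}$'' is not a formal consequence of the paper's image-dimension-drop definition of \emph{critical} (e.g.\ $(x,y)\mapsto(x^2,y)$ has rank $1<2$ at the origin yet full-dimensional local image), but it is precisely what Mikhalkin proves in the cited references, namely that $S(f)$ is the locus where $\Log|\cdot|_{\cV(f)}$ fails to be a submersion; invoking Theorem \ref{Thm:MikhalkinContour} in that rank form, as you do, is both legitimate and necessary for the open-mapping step.
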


\vspace*{-12pt}

We define the \struc{\textit{extended boundary}} of an amoeba as follows. Let $f \in \C[\mathbf{z}^{\pm 1}]$ with corresponding amoeba $\cA(f)$. We call a point $\mathbf{w} \in \cA(f)$ an \struc{\textit{extended boundary point}} if it can be transferred to the complement of the amoeba by a perturbation of the coefficients of the defining polynomial $f$. We denote the extended boundary of $\cA(f)$ by $\struc{\partial^{e}\cA(f)}$. A formal definition of the extended boundary is given in Definition \ref{Def:ExtendedBoundary}.

 Note that $\partial^{e} \cA(f)$ contains the topological boundary $\partial \cA(f)$ of an amoeba, but $\partial \cA(f)$ can be a strict subset of $\partial^{e} \cA(f)$. In Section \ref{SubSec:Amoebas} we also motivate the concept of the extended boundary of an amoeba; see particularly Example \ref{Ex:Boundary}. 

We give an explicit characterization of the extended boundary of
amoebas up to singular points of the contour by strengthening Corollary
\ref{Cor:MikhalkinBoundary}. More precisely, we show that a point $\mathbf{w} \in \R^n$ may be a boundary point of an amoeba
$\cA(f)$ only if \textit{every} point in the (non-empty) intersection of its
fiber $\F_{\mathbf{w}}$ and the variety $\cV(f)$ belongs to the set $S(f)$. Furthermore,
we show that this condition is also sufficient if $\mathbf{w}$ is a non-singular point of
the contour.

\begin{thm}
Let $f \in \C[\mathbf{z}^{\pm 1}]$ with $\cV(f) \subset (\C^*)^n$ smooth and let
$\mathbf{w} \in \R^n$.
\begin{eqnarray}
	& & \text{ If } \mathbf{w} \in \partial^{e} \cA(f) \, \text{, then } \,
\F_{\mathbf{w}} \cap \cV(f) \subseteq S(f).
\label{Equ:MainTheorem}
\end{eqnarray}
If in addition $\mathbf{w}$ is a smooth point of the contour $\cC(f)$, then the
implication in \eqref{Equ:MainTheorem} is an equivalence.
\label{Thm:Main1}
\end{thm}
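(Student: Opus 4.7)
I would prove the two implications separately. For the necessary condition I argue by contraposition: suppose some $\mathbf{v} \in \F_{\mathbf{w}} \cap \cV(f)$ satisfies $\mathbf{v} \notin S(f)$. By Theorem \ref{Thm:MikhalkinContour}, $\mathbf{v}$ is a non-critical point of the restricted map $\Log|\cdot| \colon \cV(f) \to \R^n$, so its real differential has maximal rank $n$. The submersion theorem yields an open neighborhood $U \subset \cV(f)$ of $\mathbf{v}$ with $\Log|U|$ containing an open neighborhood of $\mathbf{w}$ in $\R^n$. The two underlying open conditions (non-singularity of $\cV(f)$ at $\mathbf{v}$ and full rank of $d(\Log|\cdot|)$) persist under small coefficient perturbations: for any $g \in \C[\mathbf{z}^{\pm 1}]$ and sufficiently small $t$, the implicit function theorem produces a nearby smooth point $\mathbf{v}_t \in \cV(f+tg)$ enjoying the same properties, so that $\Log|\cdot||_{\cV(f+tg)}$ still covers a neighborhood of $\mathbf{w}$. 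Hence $\mathbf{w}$ remains in the interior of $\cA(f+tg)$ for all small $t$, contradicting the assumption that $\mathbf{w}$ is an extended boundary point.

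For the sufficient condition at a non-singular contour point, let $\mathbf{v}_1, \dots, \mathbf{v}_k$ enumerate $\F_{\mathbf{w}} \cap \cV(f)$. By hypothesis every $\mathbf{v}_i$ lies in $S(f)$, so by Theorem \ref{Thm:MikhalkinContour} each $\mathbf{v}_i$ is critical for $\Log|\cdot||_{\cV(f)}$, giving $\mathrm{rank}\, d(\Log|\cdot|)_{\mathbf{v}_i} \leq n-1$. Non-singularity of $\cC(f)$ at $\mathbf{w}$ forces this rank to equal $n-1$ and forces the image of $d(\Log|\cdot|)_{\mathbf{v}_i}$ to coincide with $T_{\mathbf{w}}\cC(f)$ for every $i$. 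I would then carry out a second-order expansion of $\Log|\cdot|$ along the real kernel of $d(\Log|\cdot|)_{\mathbf{v}_i}$ inside $T_{\mathbf{v}_i}\cV(f)$; the non-singular-contour hypothesis is precisely what prevents the normal-to-$T_{\mathbf{w}}\cC(f)$ component of this Hessian from vanishing, so the Log-image of a neighborhood of $\mathbf{v}_i$ is contained in one closed half-space bounded by $T_{\mathbf{w}}\cC(f)$. Since $\cC(f)$ is locally a single smooth hypersurface near $\mathbf{w}$, these half-spaces must agree across all $\mathbf{v}_i$, and $\cA(f)$ is locally contained in one closed half-space bounded by $\cC(f)$ near $\mathbf{w}$.

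To produce the witnessing perturbation, I apply the implicit function theorem simultaneously to $f=0$ and to the reality condition $\gamma(\mathbf{z}) \in \P^{n-1}_\R$ defining $S(f)$, deforming each $\mathbf{v}_i$ smoothly to $\mathbf{v}_i(t) \in \cV(f+tg) \cap S(f+tg)$. A generic choice of $g$ (for instance, variation of a single constant coefficient of $f$) makes $\frac{d}{dt}\Log|\mathbf{v}_i(t)||_{t=0}$ have a non-zero normal component pointing out of the local amoeba at $\mathbf{w}$. For all small $t>0$, the local contour $\cC(f+tg)$ is thus pushed past $\mathbf{w}$ into the open half-space opposite the local amoeba, so $\mathbf{w}$ lies in the complement of $\cA(f+tg)$. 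This verifies $\mathbf{w} \in \partial \cA(f)$ in the extended sense.

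The main obstacle is the one-sidedness statement in the sufficient direction: turning the pointwise fact that every preimage is critical into the global geometric assertion that the entire local amoeba lies on a single side of $\cC(f)$ near $\mathbf{w}$. This requires careful coordination of the Hessian analysis across all points in the finite fiber $\F_{\mathbf{w}} \cap \cV(f)$ and uses the non-singular contour hypothesis in an essential way, both to secure the maximal rank $n-1$ and to exclude Hessian degeneracy transverse to $\cC(f)$.
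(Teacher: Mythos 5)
Your proof of the necessary direction is correct and is essentially the paper's argument, just stated in coordinate-free differential-topological language rather than via the realification that the paper uses. The paper passes to $f^{\re},f^{\im}$ and shows that the fiber-torus zero sets $\cV(\Fa[|\mathbf{v}|,\re])$ and $\cV(\Fa[|\mathbf{v}|,\im])$ meet transversally at $\Arg(\mathbf{v})$ (Lemmas \ref{Lem:CriticalPointsLogArg} and \ref{Lem:RegularPointsonFiber}), and invokes stability of transversal intersections; your phrasing via the submersion theorem and persistence of full rank under perturbation is the same mechanism packaged differently. Both routes work.

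The sufficient direction is where your proposal falls short of what the paper actually proves. Your broad plan --- perturb so that the contour is pushed past $\mathbf{w}$ into the complement --- is the same as the paper's, but you skip the ingredient that makes it go through, and you replace it with machinery that is both unnecessary and not rigorously supported.

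Concretely, three gaps. First, you enumerate $\F_{\mathbf{w}}\cap\cV(f)$ as a finite list $\mathbf{v}_1,\dots,\mathbf{v}_k$ without justification; the paper does not need finiteness here (it is established as a generic fact only later, in Proposition \ref{Prop:SameTangentSpace} and Theorem \ref{Thm:BoundaryArbitraryDimension}). Second, and more importantly, the coordination across the $\mathbf{v}_i$ that you flag as the ``main obstacle'' is handled in the paper by a precise structural result that you do not reproduce: Proposition \ref{Prop:SameTangentSpace} shows that at a regular contour point $\mathbf{w}$, \emph{all} critical points in the fiber have the \emph{same} image under the logarithmic Gau\ss\ map, because that image equals $G(\mathbf{w})$, the Gau\ss\ image of the contour itself. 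This is what gives a single, well-defined normal direction in which to translate the variety simultaneously away from every $\mathbf{v}_i$. Your appeal to ``these half-spaces must agree'' is asserting a consequence of this proposition without proving it, and your Hessian analysis does not substitute for it: regularity of $\cC(f)$ at $\mathbf{w}$ controls the first-order data (tangent spaces) but does not by itself rule out a degenerate second-order expansion in the kernel directions, so the claim that non-singularity of the contour ``is precisely what prevents the normal component of the Hessian from vanishing'' is unsupported. Third, the witnessing perturbation: the paper chooses the perturbation as a local translation of $\cV(f)$ in the (unique, by Prop.~\ref{Prop:SameTangentSpace}) negative normal direction, and then combines this with the already-proved necessary direction (no transversal intersection points can survive, since there were none to begin with) to conclude $\cV(\tilde f)\cap\F_{\mathbf{w}}=\emptyset$. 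Your version instead posits that a ``generic'' one-parameter perturbation works, which is exactly the thing one must verify, not assume. Importing Proposition \ref{Prop:SameTangentSpace} and replacing the Hessian discussion with the translation construction would close the gaps.
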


In the case of the topological boundary $\partial \cA(f)$, the implication \eqref{Equ:MainTheorem} follows easily from the implicit function theorem. The extension to $\partial^{e} \cA(f)$, however, requires a more sophisticated proof.

Theorem \ref{Thm:Main1} characterizes those points of the contour of an amoeba that belong to the extended boundary of the amoeba. However, to compute an amoeba $\cA(f)$ via its extended boundary, we need to transform this characterization into an algorithmic test for points of the contour $\cC(f)$. 
 
Thus, we use Theorem \ref{Thm:Main1} to derive a second, less abstract description of the extended boundary which is much more useful from a computational point of view.
 
The precise statement is given in Theorem \ref{Thm:BoundaryArbitraryDimension}. We summarize the result here as follows.

\begin{cor}
Let $f \in \C[\mathbf{z}^{\pm 1}]$ with $\cV(f) \subset (\C^*)^n$ smooth and
$\mathbf{w}$ a smooth point of the contour $\cC(f)$. Then $\mathbf{w}$ is a 
point of the extended boundary of the amoeba $\cA(f)$ if and only if every point $\mathbf{v} \in \cV(f) \cap \F_{\mathbf{w}}$ has
\begin{itemize}
 \item multiplicity greater than one for $n = 2$, and
 \item $\cV(f) \cap \F_{\mathbf{w}}$ is finite for $n \geq 3$.
\end{itemize}
This is the case if and only if every real zero of a certain
ideal $I \subset \R[x_1,\ldots,x_n,y_1,\ldots,y_n]$ determined by $f$ and $\mathbf{w}$
\begin{itemize}
 \item has multiplicity greater than one for $n = 2$,
 \item and the real locus $\cV_\R(I)$ is finite for $n \geq 3$.
\end{itemize}
\label{Cor:Main1}
\end{cor}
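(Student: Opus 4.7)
The plan is to invoke Theorem~\ref{Thm:Main1} to reduce the boundary characterization to the condition that every $\mathbf{v}\in\F_\mathbf{w}\cap\cV(f)$ lies in $S(f)$, then to translate this criticality condition pointwise into the stated multiplicity/finiteness statements, and finally to record everything in terms of the realified ideal $I$. Reading $\mathbf{v}\in S(f)$ as criticality of the map $\Log|\cdot|$ restricted to $\cV(f)$, the condition is that its real differential at $\mathbf{v}$ has rank strictly less than $n$, equivalently that $T_\mathbf{v}\cV(f)\cap T_\mathbf{v}\F_\mathbf{w}$ has real dimension strictly greater than the expected transversal value $n-2$.

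For $n=2$ the expected transversal dimension is zero, so criticality at $\mathbf{v}$ amounts to $\cV(f)$ and $\F_\mathbf{w}$ failing to meet transversally at $\mathbf{v}$. Since $\cV(f)$ is non-singular, both are smooth real $2$-folds in $\R^4$, and the four real equations $\RE(f)$, $\IM(f)$, $x_j^2+y_j^2-e^{2w_j}$ ($j=1,2$) form a square system on $\R^4$ cutting out $\cV(f)\cap\F_\mathbf{w}$. Non-transversality is equivalent to the Jacobian of this system dropping rank at $\mathbf{v}$, which is the classical characterization of the root having multiplicity greater than one.

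For $n\geq 3$ I would argue in two directions. If some $\mathbf{v}\in\F_\mathbf{w}\cap\cV(f)$ does not lie in $S(f)$, then the differential of $\Log|\cdot|$ restricted to $\cV(f)$ has full rank $n$ at $\mathbf{v}$; by the implicit function theorem the intersection $\cV(f)\cap\F_\mathbf{w}$ is, near $\mathbf{v}$, a real-analytic submanifold of dimension $n-2\geq 1$, hence infinite. Conversely, Theorem~\ref{Thm:MikhalkinContour} identifies $\cC(f)$ with $\Log|S(f)|$, and a dimension count on $S(f)=\gamma^{-1}(\P_\R^{n-1})$ shows that $S(f)$ has pure real dimension $n-1$; at a non-singular contour point $\Log|\cdot|$ restricted to $S(f)$ is locally finite-to-one onto the $(n-1)$-dimensional contour, so $S(f)\cap\F_\mathbf{w}$ is finite. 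Therefore ``every $\mathbf{v}\in\F_\mathbf{w}\cap\cV(f)$ lies in $S(f)$'' forces $\F_\mathbf{w}\cap\cV(f)\subseteq S(f)\cap\F_\mathbf{w}$ to be finite.

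For the reformulation in terms of $I$, write $z_j=x_j+iy_j$ and let $I\subset\R[x_1,\ldots,x_n,y_1,\ldots,y_n]$ be the ideal generated by $\RE(f)$, $\IM(f)$, and $x_j^2+y_j^2-e^{2w_j}$ for $j=1,\ldots,n$. Under the identification $\C^n\simeq\R^{2n}$, the real locus $\cV_\R(I)$ coincides with $\cV(f)\cap\F_\mathbf{w}$, so both multiplicity at isolated real roots (for $n=2$) and finiteness of $\cV_\R(I)$ (for $n\geq 3$) transfer directly from the analytic intersection. The main obstacle I anticipate is justifying the finiteness of $S(f)\cap\F_\mathbf{w}$ at a non-singular contour point, which requires a careful local analysis of $S(f)$ as a real-analytic subset of $\cV(f)$ and of $\Log|\cdot|$ restricted to it; the remaining steps are pointwise rank calculations and standard facts about the realification.
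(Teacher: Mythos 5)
Your overall strategy matches the paper's: reduce to Theorem~\ref{Thm:Main1}, translate the condition ``every $\mathbf{v}\in\F_{\mathbf{w}}\cap\cV(f)$ lies in $S(f)$'' into transversality/multiplicity data for the realified system, and package the result via the ideal $I=\langle f^{\re}, f^{\im}, x_1^2+y_1^2-|v_1|^2,\ldots,x_n^2+y_n^2-|v_n|^2\rangle$. Your $n=2$ argument is correct and coincides with the paper's, and your ``non-critical implies a local $(n-2)$-manifold, hence infinite'' direction for $n\geq 3$ is also fine.

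The genuine gap is exactly where you flag it: the finiteness of $S(f)\cap\F_{\mathbf{w}}$ when every intersection point is critical. Your argument -- ``$S(f)$ has pure dimension $n-1$, the contour has dimension $n-1$, so $\Log|\cdot|$ restricted to $S(f)$ is locally finite-to-one'' -- is not a proof; equality of source and target dimensions does not preclude positive-dimensional fibers (the map could collapse a whole arc of $S(f)\cap\F_{\mathbf w}$ to the single point $\mathbf{w}$). The paper fills this in with two specific inputs you do not supply. First, Proposition~\ref{Prop:SameTangentSpace} establishes the commuting diagram~\eqref{Equ:LogGaussDiagramm}: at a non-singular contour point $\mathbf{w}$, all points of $S(f)\cap\F_{\mathbf{w}}$ are mapped by the logarithmic Gau\ss\ map $\gamma$ to the \emph{same} value $G(\mathbf{w})\in\P_\R^{n-1}$, so $S(f)\cap\F_{\mathbf{w}}\subseteq\gamma^{-1}(G(\mathbf{w}))$. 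Second, Lemma~\ref{Lem:LogGaussFinite} (Mikhalkin) gives that $\gamma$ has degree $\vol(\New(f))$, so $\gamma^{-1}(G(\mathbf{w}))$ is generically finite. This is why Theorem~\ref{Thm:BoundaryArbitraryDimension} carries the explicit genericity hypothesis ``$\gamma^{-1}(G(\mathbf{w}))$ finite'' in addition to non-singularity of the contour point; that hypothesis is implicit in Corollary~\ref{Cor:Main1} and is not a consequence of non-singularity alone, so your argument would need to assume it as well. Without Proposition~\ref{Prop:SameTangentSpace} (or an equivalent statement that the fiber of $\Log|\cdot|_{S(f)}$ over $\mathbf{w}$ sits inside a single fiber of $\gamma$), the finiteness step remains unproved.
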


This result goes beyond a description of the contour and the (extended) boundary of amoebas. It provides new geometric insight since it allows to interpret the contour of an amoeba as the boundary of the full dimensional cells a particular decomposition of the ambient space of an amoeba, see Theorem \ref{Thm:BettiDecompositionContour}. This \struc{\textit{Betti decomposition}} is given by the $0$-th Betti number of the intersection of the original hypersurface with the fiber over each point in the amoeba space.

For $n = 3$ we give bounds on the number of connected components of $\cV(f) \cap
\F_{\mathbf{w}}$ depending on the degree of $f$. For $n = 2$ the entire set $\cV(f) \cap 
\F_{\mathbf{w}}$ is generically finite and we can  compute its cardinality. For further details see 
Section \ref{Sec:GenericallySufficient}, particularly Theorem 
\ref{Thm:BettiUpperBound}.\\

Note that Theorem \ref{Thm:BoundaryArbitraryDimension} allows us to recover the description of amoebas of linear polynomials by Forsberg,
Passare and Tsikh \cite{Forsberg:Passare:Tsikh}, see also Proposition \ref{Prop:LinearCase}. Also, Theorem \ref{Thm:BoundaryArbitraryDimension} implies that for every Harnack curve \cite{Harnack} the contour and the boundary of the corresponding
amoeba coincide, which had been proven earlier by Mikhalkin and Rullg{\aa}rd \cite{Mikhalkin:Rullgard}, see also Proposition \ref{Prop:Harnack}.\\

\medskip

Based on these theoretical results we provide a new algorithmic approach for the initial 
problem: the computation of amoebas. This approach has the following key properties, 
see Section \ref{Sec:Computation} for details.

\begin{thm}
Suppose $\textnormal{i}$ satisfies $\textnormal{i}^2 = -1$. Let $f \in \Q[\textnormal{i}][\mathbf{z}^{\pm 1}]$  and
$\mathbf{v} \in \Q^n$. Then we can compute the contour and the boundary of
$\cA(f)$ as well as decide membership of $\Log|\mathbf{v}| \in \cA(f)$ algorithmically.
\label{Thm:AmoebaComputation}
\end{thm}

This approach can be implemented efficiently in dimension two and the
computation uses only Gr\"obner basis methods for zero-dimensional ideals. The Betti decomposition
mentioned above can be approximated with the same methods. An implementation is also possible in higher dimensions, but then the computation requires quantifier elimination methods, which are known to be computationally hard; see e.g. \cite{Basu:Pollack:Roy}. We have 
implemented a prototype version of both the boundary computation in dimension two, see Example \ref{Ex:Boundary} and Figure 
\ref{Fig:AmoebaBoundaryApproximation}, and the computation of the Betti decomposition, see Example \ref{Exa:BettiDecomp} and Figure \ref{Fig:BettiDecompositon}.\\

The methods developed in this paper are essential for the comprehension and computation of
finitely many representatives of amoebas of ideals, which we call \textit{amoeba bases}, see Theorem
\ref{Thm:AmoebaBasesBoundary}. We call a set $G =
\{g_1,\ldots,g_s\}$ an \struc{\textit{amoeba basis}} for a finitely generated ideal
$I$ if $\langle g_1,\ldots,g_s\rangle= I$, $\cA(I) = \bigcap_{j = 1}^s
\cA(g_j)$ and no subset of $G$ satisfies these properties, see Definition
\ref{Def:AmoebaBasis}. This definition is analogous to Gr\"{o}bner bases for classical algebraic
varieties, e.g., \cite{Cox:Little:OShea}, and tropical bases for tropical varieties, e.g.,
\cite{Bogart:et:al,Hept:Theobald:2,Speyer:Sturmfels}, Note that in contrast to the classical and the tropical case,
amoeba bases are not well understood yet and even their existence is unclear in general.
Here, we provide amoeba bases for a first, non-trivial class of ideals, namely for amoebas
of ideals corresponding to full ranked systems of linear equations, see Theorem
\ref{Thm:AmoebaBasesLinearCase}. The latter theorem was obtained in joint work with Chris
Manon and we are thankful for his approval to publish it in this paper.  Meanwhile, Theorem
\ref{Thm:AmoebaBasesLinearCase} was generalized by Nisse in \cite{Nisse:AmoebaBasesZeroDimensional} after our article had been published on the arXiv.\\

This article is organized as follows. In Section \ref{Sec:Preliminaries} we introduce
notation and recall all relevant facts
about amoebas. In Section \ref{Sec:NecessaryCondition} we prove the
implication \eqref{Equ:MainTheorem} of Theorem \ref{Thm:Main1}. In Section \ref{Sec:GenericallySufficient} we show
that \eqref{Equ:MainTheorem} is in fact an equivalence if $\mathbf{w} \in
\R^n$ is a non-singular point of the contour. We also prove Corollary \ref{Cor:Main1} and
discuss its consequences. In Section \ref{Sec:Computation} we review approximation methods for amoebas and develop
an algorithm for computing the boundary and the contour. In Section
\ref{Sec:AmoebaBases} we introduce the concept of an amoeba basis, give an overview
about known facts, and show why a description of the boundary of amoebas is crucial for the computation of amoeba bases, see Theorem \ref{Thm:AmoebaBasesBoundary}.\\

We remark that some of the results of this article, mostly Section \ref{Sec:NecessaryCondition}, are part of the thesis of the second author  \cite{deWolff:Diss}.

\bigskip

\subsection*{Acknowledgments.} We would like to thank Hannah Markwig and Thorsten Theobald
for their support during the development of this article. We thank Taylor Brysiewicz, Aur\'{e}lien Greuet, Simon Hampe, Johannes Rau, Elias Tsigaridas, and particularly Laura Matusevich for their helpful comments.
We cordially thank Chris Manon for his contribution on amoeba bases, specifically regarding Theorem
\ref{Thm:AmoebaBasesLinearCase}, and his approval to present this result here.\\

The first author was partially supported by GIF Grant no.\ 1174/2011. The second author 
was partially supported by GIF Grant no.\ 1174/2011, DFG project MA 4797/3-2 and DFG 
project TH 1333/2-1.

\section{Preliminaries}
\label{Sec:Preliminaries}

\subsection{Amoebas}
\label{SubSec:Amoebas}
As follows we always consider irreducible polynomials with a smooth variety. The amoeba $\cA(f)\subset \R^n$ of an irreducible Laurent polynomial $f$ is a full-dimensional real analytic set with open, convex components of the complement. Note that $\cA(f)$ is not necessarily full-dimensional if $f$ is reducible, see \cite[p. 58, Figure 2]{Rullgard:Diss}. Furthermore, every component of the complement of $\cA(f)$ corresponds to a lattice point $\alp$ in the Newton polytope $\New(f)$ of $f$ via the \struc{\textit{order map}}, see \cite{Forsberg:Passare:Tsikh}, which can be interpreted as a multivariate analog of the classical argument principle from complex analysis.
\begin{eqnarray}
  \label{Equ:Ordermap}
\struc{\ord}: \R^n \setminus \cA(f) & \to & \New(f) \cap \Z^n, \quad \mathbf{w} \mapsto (u_1,\ldots,u_n) \ \text{ with } \\
  u_j & = & \frac{1}{(2\pi i)^n} \int_{\Log|\mathbf{z}| = \mathbf{w}} \frac{z_j \partial_j f(\mathbf{z})}{f(\mathbf{z})}
    \frac{dz_1 \cdots dz_n}{z_1 \cdots z_n} \ \text{ for all } \ 1 \le j \le n \, .\nonumber
\end{eqnarray}
The order map is invariant on all points of a particular component of the complement of $\cA(f)$ and it is injective on the set of all components of the complement of $\cA(f)$; see \cite[Prop. 2.5, Theorem 2.8]{Forsberg:Passare:Tsikh}. We
denote the component of the complement of $\cA(f)$ containing all points of order $\alp \in \Z^n \cap
\New(f)$ by $E_\alp(f)$, i.e.,
\begin{eqnarray*}
 \struc{E_\alp(f)} & = & \{\mathbf{w} \in \R^n \setminus \cA(f) \ : \ \ord(\mathbf{w}) = \alp\}.
\end{eqnarray*}
Here, we treat $\alp \in \Z^n$ simultaneously as an exponent and as a lattice point in a polytope with slight abuse of notation.\\

In the following paragraph we introduce and motivate the \textit{extended boundary} of an amoeba. For a finite set $A \subset \Z^n$ we define the \struc{\textit{parameter space} $(\C^*)^A$} as the set of 
all Laurent polynomials $f = \sum_{\alp \in A} b_\alp \mathbf{z}^{\alp}$ with support set 
$A$ and non-vanishing complex coefficients $b_\alp \in \C^*$. Note that $\cV(f)$ is dilation invariant, i.e. $\cV(f) = \cV(c \cdot f)$ for all $c \in \C^*$. Thus, we assume that all $f \in (\C^*)^A$ are monic in the sense 
that the leading term in reverse lexicographic term 
ordering equals one. Moreover, every monomial is a unit 
in the corresponding Laurent polynomial ring and hence $\cV(f)$ and $\cA(f)$ are 
invariant under translations of support sets
\begin{eqnarray}
	\tau_\beta: \Z^n \to \Z^n, \quad \mathbf{z}^{\alp} \mapsto \mathbf{z}^{\alp +
\beta} \label{Equ:Translation}
\end{eqnarray}
with $\beta \in \Z^n$. Hence, we assume from now on that
\begin{eqnarray*}
	A \subset \N^n.
\end{eqnarray*}
 Thus, we can represent each Laurent polynomial by a monic, regular polynomial with support set $A \subset \N^n$ in $(\C^*)^A$. To indicate this representation we write in the following occasionally $f \in \C[\mathbf{z}]$ with $\cV(f) \subset (\C^*)^n$ with slight abuse of notation. 
 
 We can identify every polynomial in $(\C^*)^A$ with its coefficient vector. Therefore, we can identify $(\C^*)^A$ with a $(\C^*)^{d-1}$ space, where $d = \# A$. The Euclidean metric on  $(\C^*)^{d-1}$ induces a \struc{\textit{parameter metric}} $\struc{d^A}: (\C^*)^A \times (\C^*)^A \ra \R_{\geq 0}$ on $(\C^*)^A$ in the following way. Let $f = \sum_{\alp \in A} b_\alp \cdot \mathbf{z}^{\alp}$ and $g =
\sum_{\alp \in A} c_\alp \cdot \mathbf{z}^{\alp}$, then
\begin{eqnarray*}
	\struc{d^A(f,g)} & =	& \lf(\sum_{\alp \in A} \lf|b_\alp - c_\alp\ri|^2\ri)^{1/2}.
\end{eqnarray*} 
Note that $d^A$ is also dilation invariant since every polynomial in $(\C^*)^A$ is assumed to be monic.

\begin{defn}
Let $f \in (\C^*)^A$. We call a point $\mathbf{w} \in \cA(f)$ an \struc{\textit{extended boundary point}}
of $\cA(f)$ if there exists $\alp \in \New(f) \cap \Z^n$ such that for every $\eps >
0$ there exists $g \in \cB_{\eps}^A(f) \subset (\C^*)^A$ with $\mathbf{w} \in
E_{\alp}(g)$. Here, $\struc{\cB_{\eps}^A(f)}$ denotes the open ball around the polynomial $f$ with
radius $\eps$, with respect to the parameter metric $d^A$ on $(\C^*)^A$. We denote the extended boundary of $\cA(f)$ as $\struc{\partial^{e} \cA(f)}$.
\label{Def:ExtendedBoundary}
\endenvi
\end{defn}

This definition of the extended boundary yields a set which may exceed the topological boundary of the amoeba. In what follows we motivate this object. Namely, it guarantees that the boundary $\partial \cA(f)$ of an amoeba $\cA(f)$ is continuous under the change of coefficients.

More precisely, it is well-known that the number of components of the complement of $\cA(f)$ is lower semicontinuous under arbitrary small perturbations of coefficients, see \cite[Prop. 1.2]{Forsberg:Passare:Tsikh}. However, a new bounded component of the complement of can appear inside an amoeba after an arbitrary small change of the coefficients in $(\C^*)^A$, as the following example shows. Note that an amoeba $\cA(f)$ is called \struc{\textit{solid}} if the number of connected components of its complement is minimal, i.e., if every component of the complement of $\cA(f)$ corresponds to a vertex in $\New(f)$.

\begin{exa}
\label{Ex:Boundary}
Let $f = z_1^3 + z_2^3 + c z_1z_2 + 1$ with $c \in \R$. Then $\cA(f)$ is solid if and only if $c \in [-3,1]$. Particularly, for $c = 1$ every sufficiently small
neighborhood of the origin is contained in the interior of $\cA(f)$. If we, however,
increase the coefficient of $z_1z_2$ by an arbitrary small $\eps > 0$, then the origin is
contained in a bounded component of the complement of $\cA(f)$ of order $(1,1)$. See Figure \ref{Fig:AmoebaExa}, and
\cite{deWolff:Diss,Passare:Rullgard:Spine,Theobald:deWolff:Genus1} for further details.
\label{Exa:BoundaryDefinition}
\endenvi
\end{exa}

\begin{figure}
\ifpictures
\includegraphics[width=0.4\linewidth]{./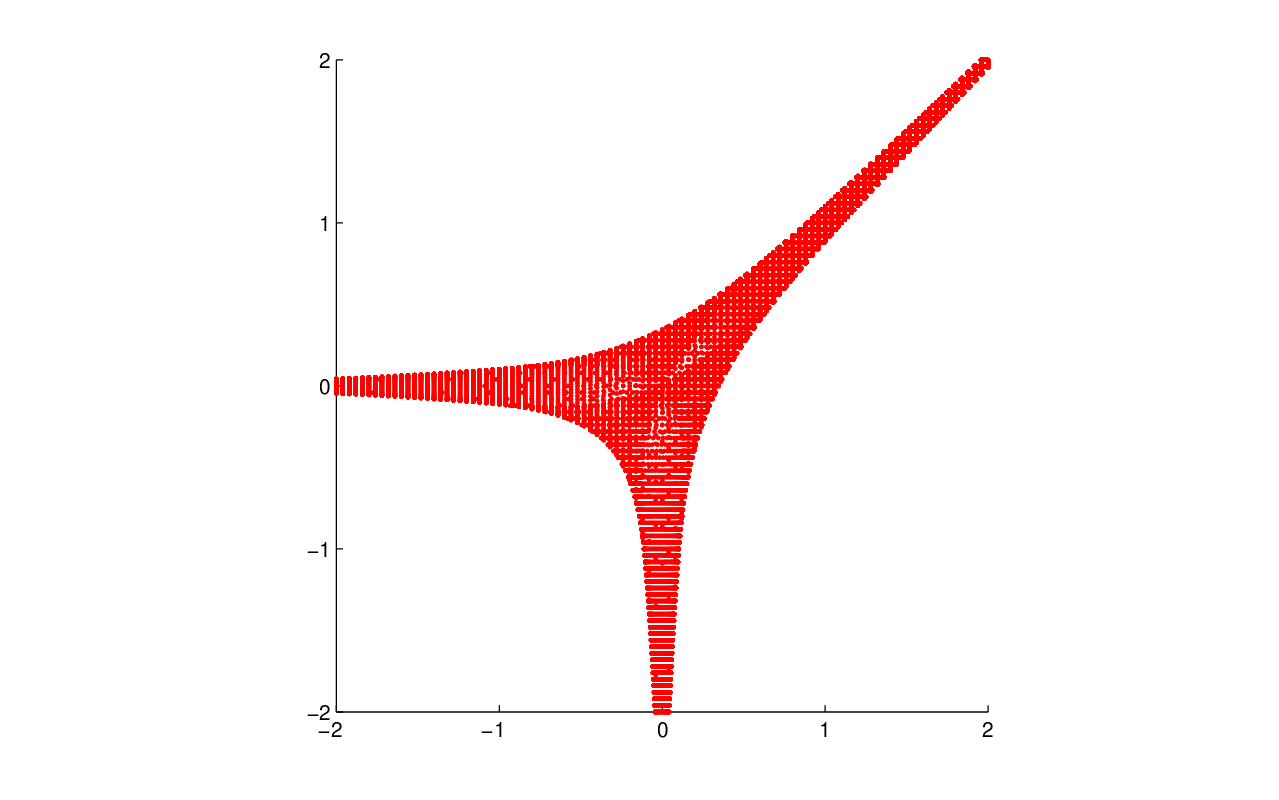}
\includegraphics[width=0.4\linewidth]{./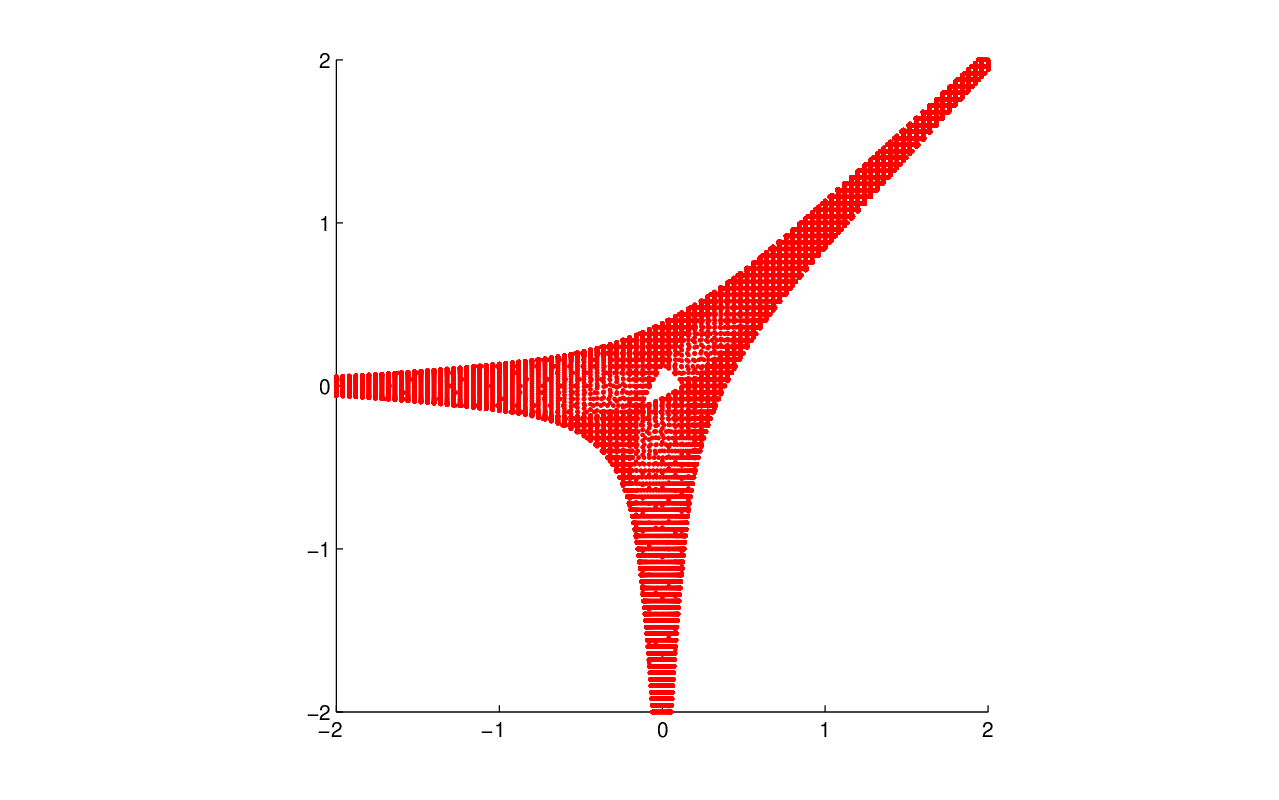}
\fi
\caption{The amoeba of $f = z_1^3 + z_2^3 + c z_1z_2 + 1$ with $c = 1$ and $c = 1.3$.}
\label{Fig:AmoebaExa}
\end{figure}

Hence, if we investigate the topological boundary of a family of amoebas given by 
polynomials $f$ as in the Example \ref{Exa:BoundaryDefinition} with coefficient $c$ being 
changed continuously from $1+\eps$ to $1$, then the boundary of the 
bounded component of the complement of $\cA(f)$ converges to the origin as a single, isolated point at the origin. 
But, obviously the origin is not part of the topological boundary of the amoeba of $f = 
z_1^3 + z_2^3 + z_1z_2 + 1$. Therefore, we consider the extended boundary, which 
characterizes such points as extended boundary points as well.\\

Recall that for every variety $\cV(f) \subset (\C^*)^n$ one defines its \struc{\textit{real locus}} as $\struc{\cV_{\R}(f)} =
\cV(f) \cap (\R^*)^n$. One essential tool to prove the main results of this article is to
investigate the following \struc{\textit{realification}} of polynomials. For every
polynomial $f \in \C[\mathbf{z}] = \C[z_1, \ldots,z_n]$ we denote its real and imaginary part as $\struc{f^{\re}}, \struc{f^{\im}} \in \struc{\R[\mathbf{x},\mathbf{y}]} = \R[x_1,\ldots,x_n,y_1,\ldots,y_n]$. So, we have
\[ 
  f(\mathbf{z}) \ = \ 
  f(\mathbf{x}+\text{i}\mathbf{y}) \ = \ f^{\re}(\mathbf{x},\mathbf{y}) + \text{i} \cdot
f^{\im}(\mathbf{x},\mathbf{y})\,,
\]
see also \cite{Theobald:deWolff:SOS}. With this notation $\cV(f) \subset (\C^*)^n \cong (\R^2 \setminus \{(0,0)\})^n \subset \R^{2n}$ coincides with the intersection of the real loci $\cV_{\R}(f^{\re}), \cV_{\R}(f^{\im}) \in (\R^2 \setminus \{(0,0)\})^n$ of the varieties $\cV(f^{\re}),\cV(f^{\im}) \subset (\C^*)^{2n}$ of $f^{\re}$ and $f^{\im}$.

Of $\cV(f)$ is smooth, then  $\cV_\R(f^{\re})$ 
and $\cV_\R(f^{\im})$ are also smooth after the embedding of $\cV(f)$ in $\R^{2n}$. We use this fact in Section \ref{Sec:NecessaryCondition}.

\subsection{Fibers}
\label{SubSec:Fibers}

Recall that a branch of the \struc{\textit{holomorphic logarithm}} is defined as
\begin{eqnarray*}
	\struc{\log_\C}: \C^* \to \C, \quad z \mapsto \log|z| + \text{i} \arg(z),
\end{eqnarray*}
where $\struc{\arg(z)}$ denotes the argument of the complex number $z$. Hence, the log absolute map $\log|\cdot|$ is the real part of any branch of the complex logarithm. Since the multivariate case works componentwise like the univariate case, the holomorphic logarithm $\struc{\Log_\C}$ yields a fiber bundle for $\Log|\cdot|$ such that the following diagram commutes \cite{deWolff:Diss,Mikhalkin:Annals,Mikhalkin:Survey}:
\begin{eqnarray}
\begin{xy}
	\xymatrix{
	(\C^*)^n \ar[rr]^{\Log_\C} \ar[rd]_{\Log|\cdot|} & & \R^n \times (S^1)^n \ar[ld]^{\RE}. \\
	& \R^n &
	}
\end{xy} \label{Equ:FiberBundle}
\end{eqnarray}

\noindent The \struc{\textit{fiber}} $\F_{\mathbf{w}}$ of each point $\mathbf{w} \in \R^n$ is given by
\begin{eqnarray*}
	\struc{\F_{\mathbf{w}}} & = & \{\mathbf{z} \in (\C^*)^n \ : \ \Log|\mathbf{z}| =
\mathbf{w}\}.
\end{eqnarray*}
Due to \eqref{Equ:FiberBundle} each $\F_{\mathbf{w}}$ is homeomorphic to an $n$-torus. For $f = \sum_{\alp \in A} b_\alp \mathbf{z}^{\alp}$ and $\mathbf{v} \in (\C^*)^n$ we define the \struc{\textit{fiber function}} \cite{deWolff:Diss,Theobald:deWolff:Genus1}
\begin{eqnarray*}
\struc{f^{|\mathbf{v}|}}: (S^1)^n \to \C, \quad  \boldsymbol{\phi} \mapsto
f(e^{\Log|\mathbf{v}| + \text{i} \boldsymbol{\phi}}) = \sum_{\alp \in A} b_\alp \cdot |\mathbf{v}|^\alp \cdot e^{\text{i} \langle \alp, \boldsymbol{\phi} \rangle},
\end{eqnarray*}
where $\struc{\langle \cdot, \cdot\rangle}$ denotes the standard inner product. The map $f^{|\mathbf{v}|}$ equals $f$ restricted to the fiber $\F_{\Log|\mathbf{v}|}$. More 
precisely, $f^{|\mathbf{v}|}$ is the function obtained by the pullback 
$\struc{\iota_{|\mathbf{v}|}^*(f)}$ of $f$ under the homeomorphism $\struc{\iota_{|\mathbf{v}|}}: (S^1)^n 
\to \F_{\Log|\mathbf{v}|} \subset (\C^*)^n$, which is induced by the fiber bundle 
structure. Hence, the corresponding zero sets satisfy
\begin{eqnarray}
	\cV(f^{|\mathbf{v}|}) \ = \ \cV(\iota_{|\mathbf{v}|}^*(f)) \ \cong \ \cV(f)
\cap \F_{\Log|\mathbf{v}|}, \label{Equ:FiberFunction1}
\end{eqnarray}
and thus
\begin{eqnarray}
	\Log|\mathbf{v}| \in \cA(f) & \Lera & \cV(f^{|\mathbf{v}|}) \neq
\emptyset. \label{Equ:FiberFunction2}
\end{eqnarray}

\bigskip

The $\Arg$ map is given by
\begin{eqnarray*}
	\struc{\Arg}: (\C^*)^n \to (S^1)^n, \quad (z_1,\ldots,z_n) \mapsto (\arg(z_1),\ldots,\arg(z_n)).
\end{eqnarray*}
It is a counterpart of the $\Log|\cdot|$ map, since it is given by the componentwise projection on the imaginary part of the multivariate complex logarithm $\Log_\C$.\\

It is easy to see that the $\Log|\cdot|$ and the $\Arg$ map including their fibrations extend to the realified version in $(\R^2 \setminus \{(0,0)\})^n$ of a given variety $\cV(f)$. Here, we denote these maps as $\Log_\R|\cdot|$ and $\Arg_\R$. Later, if the context is clear, then we just write $\Log|\cdot|$ and $\Arg$ with slight abuse of notation. Let $\struc{\cR}: (\C^*)^n \to (\R^2 \setminus \{(0,0)\})^n$ denote the \struc{\textit{realification homeomorphism}}. Then following diagram commutes:

\begin{eqnarray*}
\begin{xy}
	\xymatrix{
	\R^n \ar[r]^{\text{univ. covering}\phantom{X}} & (S^1)^n & \\
	  (\R^2 \setminus \{(0,0)\})^n \ar[u]^{\Arg_\R} \ar[rd]_{\Log_\R|\cdot|} &
(\C^*)^n \ar[l]_{\phantom{XXXx}\cR} \ar[r]^{\Log_\C\phantom{XX}}
\ar[d]_{\Log|\cdot|}
\ar[u]^{\Arg} & \R^n \times (S^1)^n
\ar[ld]^{\RE} \ar[lu]_{\IM}. \\
	& \R^n &
	}
\end{xy}
\end{eqnarray*}

\subsection{The Contour of an amoeba}
\label{SubSec:Contour}
Mikhalkin gave an explicit characterization of the \textit{contour} $\cC(f)$ of
the amoeba $\cA(f)$. For two smooth manifolds $\cM_1 \subset \R^m, \cM_2 \subset \R^n$ 
and a smooth map $g : \cM_1 \to \cM_2$ a point $\mathbf{v} \in \cM_1$ is
\struc{\textit{critical}} under $g$ if its Jacobian does not have maximal rank, i.e., 
$\min\{\dim~\cM_1, \dim~\cM_2\}$, at $\mathbf{v}$.
The \struc{\textit{contour} $\cC(f)$} of $\cA(f)$ is defined as the $\Log|\cdot|$ image of the set of 
critical points of $\cV(f)$ under the $\Log|\cdot|$ map. The contour of $\cA(f)$ is a 
closed real-analytic hypersurface in $\cA(f) \subset 
\R^n$; see \cite{Passare:Tsikh:Survey} and also Lemma \ref{Lem:Contour}. Note that $\cC(f)$ is not algebraic, since $\Log|\cdot|$ is not an algebraic map. It is easy to see  that $\partial \cA(f) \subset \cC(f)$, but in general the boundary $\partial \cA(f)$ is a 
proper subset of the contour $\cC(f)$ and hence $\cC(f)$ is not sufficient to describe the boundary, see Section \ref{Sec:NecessaryCondition} for further details. 

\subsection{The Gau\ss \ Map}
\label{SubSec:GaussMap}
For a smooth variety $\cV(f) \subset (\C^*)^n$, interpreted as a complex,
smooth $(n-1)$-manifold, the \struc{\textit{Gau\ss{} map}} is given by
\begin{eqnarray}
	\struc{G}: \cV(f) \ra \P^{n-1}_\C, \quad \mathbf{z}=(z_1,\ldots,z_n) \mapsto
\lf(\frac{\partial f}{\partial z_1}(\mathbf{z}):\ldots:\frac{\partial f}{\partial
z_n}(\mathbf{z})\ri). \label{Equ:GaussMap}
\end{eqnarray}

\noindent Geometrically, the Gau\ss{} map can be interpreted as follows: For every 
point $\mathbf{v} \in \cV(f)$ consider the \struc{\textit{tangent space} $T_{\mathbf{v}}\cV(f)$} in 
$\C^n$ of $\cV(f)$ at $\mathbf{v}$. Then the $i$-th entry of $G(\mathbf{v})$ is the $i$-th standard basis vector of the tangent plane $T_{\mathbf{v}}\cV(f)$. Hence, the image of the Gau\ss{} map is 
in bijection with the tangent bundle $T\cV(f)$ of 
$\cV(f)$.

Since $\cV(f) \subset (\C^*)^n$, and $(\C^*)^n$ is a Lie group, there exists a canonical trivialization of the tangent bundle $T 
\cV(f)$, i.e., $T \cV(f) \approx \cV(f) \times \C^{n-1}$.
Namely, there exists an isomorphism $T_{\mathbf{z}}\cV(f) \cong 
T_{\mathbf{1}}\cV(f)$, with $\struc{\mathbf{1}} = (1,\ldots,1)$, which is induced by the group
action on $(\C^*)^n$, which itself is given by the multiplication with $\mathbf{z}^{-1}$. 
Thus, if $\mathbf{v}$ is given, then we can always treat the affine space $T_{\mathbf{v}}\cV(f)$ as a linear space. For further details see \cite{Lee,Mikhalkin:Survey}.

For the logarithmic Gau\ss \ map $\gamma$, which we defined in the introduction, an analogous trivialization can be achieved in logarithmic coordinates.

\section{A Necessary Condition for Contour Points to be Boundary Points}
\label{Sec:NecessaryCondition}

The goal in this section is to prove Implication \eqref{Equ:MainTheorem} of Theorem
\ref{Thm:Main1}. We show that a point $\mathbf{w} \in \R^n$ is an extended
boundary point of an amoeba $\cA(f)$ only if \textit{every} point of the variety $\cV(f)$
intersected with the fiber $\F_{\mathbf{w}}$ is critical under the $\Log|\cdot|$ map,
i.e., by Mikhalkin's theorem, if it has real (projective) image under the logarithmic Gau\ss{} map. For points in the \textit{topological} boundary this statement is an easy consequence of the implicit function theorem. However, the statement is less obvious for points in the \textit{extended} boundary which do not belong to the topological boundary. Moreover, we need the techniques developed in this section to complete the proof of Theorem \ref{Thm:Main1} and to show further statements in Section \ref{Sec:GenericallySufficient}.

The key idea of our approach is to study how $f^{\re}$ and $f^{\im}$, the real and the imaginary parts of $f$, behave on a
particular fiber, and in particular, how their varieties intersect.\\

We briefly recall some of the known facts regarding boundaries of amoebas. Let $\struc{\mathbf{0}}$ denote the origin and let $\struc{e_1,\ldots,e_n}$ denote the denote the standard vectors in $\Z^n$. As mentioned in the introduction, the boundary of amoebas is well understood in the linear case. The following proposition provides a characterization. It was given by Forsberg, Passare and Tsikh \cite[Prop. 4.2]{Forsberg:Passare:Tsikh} and follows from a direct calculation.

\begin{prop}[Forsberg, Passare, Tsikh]
If $f = 1 + \sum_{j = 1}^n b_j z_j$ is linear, then  $\mathbf{v} \in \ovl{E_{e_j}(f)} \cap \partial \cA(f)$ if and only if $|b_j
v_j| = 1 + \sum_{k \in \{1,\ldots,n\} \setminus \{j\}} |b_k v_k|$. Analogously, $\mathbf{v} \in \ovl{E_{\mathbf{0}}(f)} \cap \partial 
\cA(f)$ if and only if $\sum_{j = 1}^n |b_j v_j| = 1$.
\label{Prop:LinearCase}
\end{prop}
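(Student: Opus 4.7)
The plan is to characterize membership $\mathbf{v} \in \cA(f)$ purely in terms of the moduli $|b_j v_j|$ by means of the fiber function, and then to read off the boundary. By \eqref{Equ:FiberFunction2}, $\Log|\mathbf{v}| \in \cA(f)$ if and only if there exist angles $\boldsymbol{\phi}\in(S^1)^n$ with
\[
1 + \sum_{j=1}^n |b_j v_j|\, e^{\text{i}(\arg(b_j)+\phi_j)} \ = \ 0,
\]
i.e.\ the non-negative reals $1,|b_1v_1|,\ldots,|b_nv_n|$ are the edge lengths of a closed (possibly degenerate) polygon in the plane.

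First, I would invoke the elementary polygon inequality: non-negative reals $a_0,\ldots,a_n$ can be assigned angles $\theta_0,\ldots,\theta_n$ with $\sum_k a_k e^{\textnormal{i}\theta_k} = 0$ if and only if $a_k \le \sum_{\ell\neq k} a_\ell$ for every $k$. Applied to $a_0=1$ and $a_j=|b_jv_j|$, this gives
\[
\Log|\mathbf{v}|\in \cA(f)\ \Leftrightarrow\ |b_jv_j|\le 1+\sum_{k\neq j}|b_kv_k|\ \text{ for all }j,\ \text{ and }\ 1\le\sum_{j=1}^n |b_jv_j|.
\]
Consequently, the complement of $\cA(f)$ decomposes into the open convex pieces $U_0=\{\sum_j |b_jv_j|<1\}$ and $U_j=\{|b_jv_j|>1+\sum_{k\neq j}|b_kv_k|\}$ for $1\le j\le n$.

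Second, I would match these pieces with the components $E_\alpha(f)$ via the order map \eqref{Equ:Ordermap}. Along the ray where $|v_j|\to\infty$ with the remaining $|v_k|$ bounded, the integral in \eqref{Equ:Ordermap} reduces by a standard residue computation to $e_j$, while $|v_j|\to 0$ for all $j$ gives order $\mathbf{0}$. Since each $U_j$ is open, connected, and touches the corresponding asymptotic direction, and since the order map is locally constant on $\R^n\setminus\cA(f)$, we deduce $U_j=E_{e_j}(f)$ and $U_0=E_{\mathbf{0}}(f)$.

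Finally, $\New(f)\cap\Z^n$ consists exactly of the $n+1$ vertices of the standard simplex, so no further complement component of any order can appear under an arbitrarily small perturbation of the coefficients; hence the extended boundary coincides with the topological boundary of $\cA(f)$. Taking closures of the $U_\alpha$ and reading off the defining equalities yields the proposition. I expect the main technical point to be the sufficiency direction of the polygon inequality, which I would either dispatch by a short induction on $n$ placing the vectors head-to-tail and using the intermediate value theorem to close the polygon, or simply cite from \cite{Forsberg:Passare:Tsikh}.
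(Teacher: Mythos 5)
Your proof is correct, but it takes a genuinely different route from the paper. The paper re-derives Proposition \ref{Prop:LinearCase} \emph{as a corollary of its new machinery}: since $\vol(\New(f))=1$, Lemma \ref{Lem:LogGaussFinite} and Corollary \ref{Cor:LogGaussFormula} show the logarithmic Gau\ss{} map has degree one, so by Theorem \ref{Thm:Main1} a boundary point $\mathbf{w}$ must have $\#(\F_{\mathbf{w}}\cap\cV(f))=1$; the proof then observes that the image of the fiber function of a linear polynomial is a closed annulus centered at the constant term, whose boundary circle passes through the origin exactly when the stated equalities hold. You instead bypass the contour/critical-point theory entirely and prove the result from scratch: reduce membership to the polygon inequality via \eqref{Equ:FiberFunction2}, identify the resulting complement pieces with the $E_\alpha(f)$ via the order map, and take closures. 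This is in essence the original Forsberg--Passare--Tsikh direct calculation and is perfectly valid and self-contained; what it does not deliver is the paper's actual point here, which is to demonstrate that Theorem \ref{Thm:BoundaryArbitraryDimension} \emph{subsumes} the classical linear case.

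One step deserves slightly more care. When arguing that the extended boundary equals the topological boundary you only say that no new complement component can appear because $\New(f)\cap\Z^n$ consists of the $n+1$ vertices. That handles one failure mode, but the extended boundary could in principle also exceed the topological one if an \emph{existing} component $E_\alpha(g)$ reached an interior point of $\cA(f)$ for $g$ arbitrarily close to $f$. You should add: for $\mathbf{w}$ in the topological interior of $\cA(f)$ there is $\delta>0$ with $\cB_\delta(\mathbf{w})\subset\cA(f)\setminus\partial\cA(f)$; since the defining inequalities for the $U_\alpha$ depend continuously on the coefficients $b_j$, for $g$ sufficiently close to $f$ the sets $E_\alpha(g)$ still avoid $\cB_{\delta/2}(\mathbf{w})$, so $\mathbf{w}\notin E_\alpha(g)$. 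With that small addition the argument is complete.
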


If $n=2$, then the extended boundary and
the contour of an amoeba coincide for a far more general class due to a result by Mikhalkin and Rullg{\aa}rd
\cite{Mikhalkin:Rullgard}. Remember that a variety $\cV(f)$ is
called \struc{\textit{real}} if it is invariant under complex conjugation of the variables
$z_1,\ldots,z_n$. For any lattice polytope $P \subset \Z^n$ we denote the \struc{\textit{volume}} of $P$ 
by the \struc{\textit{volume form} $\vol(P)$} of $\Z^n$ satisfying:
\begin{enumerate}
 \item The standard simplex has volume one.
 \item $\vol(\cdot)$ is under $\SL_n(\Z)$ actions and affine translations.
 \item For two polytopes $P$ and $Q$ which have an intersection of codimension at least one it holds $\vol(P \cup Q) = \vol(P) + \vol(Q)$.
\end{enumerate}
For additional information see \cite[p. 182 et seq.]{Gelfand:Kapranov:Zelevinsky}. Furthermore, one 
denotes the \struc{\textit{area}} of an amoeba of a complex polynomial $f \in \C[z_1,z_2]$ with respect to 
the Lebesgue measure as $\struc{\area(\cA(f))}$. $\cV(f) \subset (\C^*)^2$ is called
\struc{\textit{real up to scalar multiplication}} if there exist $a,b_1,b_2 \in \C^*$ such that 
$a f(z_1 / b_1,z_2 / b_2)$ has real coefficients. Mikhalkin and Rullg{\aa}rd showed the following statement; see \cite[Theorem 1]{Mikhalkin:Rullgard}.

\begin{prop}[Mikhalkin, Rullg{\aa}rd]
Let $f \in \C[z_1,z_2]$ with $\vol(\New(f)) > 0$. Then the following conditions are equivalent
\begin{enumerate}
	\item $\area(\cA(f)) = \pi^2 \cdot \vol(\New(f))$
	\item $\Log|_{\cV(f)}$ is at most $2$ to $1$ and $\cV(f)$ is real up to scalar
multiplication.
	\item $\cV(f)$ is real up to scalar multiplication and its real locus
$\cV_{\R}(f)$ is a (possibly singular) Harnack curve.
\end{enumerate}
Furthermore, if any of these conditions is satisfied, then $\Log|\cV_\R(f)| = \partial \cA(f)$.
\label{Prop:Harnack}
\end{prop}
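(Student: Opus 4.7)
My plan is to prove the cycle $(1) \Rightarrow (2) \Rightarrow (3) \Rightarrow (1)$ and then derive the identity $\Log|\cV_\R(f)| = \partial \cA(f)$ from Theorem \ref{Thm:Main1}. The central tool is the Passare--Rullg{\aa}rd area identity: a direct computation of the Jacobian of $\Log|\cdot|$ restricted to $\cV(f)$ produces a pointwise density proportional to $|\IM(\gamma_1(\mathbf{z})/\gamma_2(\mathbf{z}))|$, where $\gamma = (\gamma_1 : \gamma_2)$ is the logarithmic Gau{\ss} map. Integrating over $\cV(f)$ and using the Kouchnirenko--Bernstein bound $\deg \gamma = 2\vol(\New(f))$ yields the inequality $\area(\cA(f)) \leq \pi^2 \vol(\New(f))$, with equality precisely when $\Log|_{\cV(f)}$ is generically $2$-to-$1$.

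For $(1) \Rightarrow (2)$, equality in the area bound gives the $2$-to-$1$ condition immediately; reality up to scalar then follows because a generic fiber-preserving involution on the irreducible complex curve $\cV(f) \subset (\C^*)^2$ must coincide with complex conjugation after a toric rescaling of $(z_1,z_2)$ and a global scalar on $f$. For $(2) \Rightarrow (3)$, the two generic preimages of a point in $\cA(f)$ become complex conjugates (after the rescaling), so the branching locus of $\Log|_{\cV(f)}$ equals $\cV_\R(f)$; a Riemann--Hurwitz count combined with Harnack's bound on the number of ovals forces $\cV_\R(f)$ to attain that bound, i.e.\ to be a Harnack curve. For $(3) \Rightarrow (1)$ I would invoke the explicit parametrization and area formula for Harnack curves due to Mikhalkin, which evaluates to $\pi^2 \vol(\New(f))$.

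For the boundary identity I would apply Theorem \ref{Thm:Main1}. Every $\mathbf{v} \in \cV_\R(f)$ lies in $S(f)$ since both the coordinates and the partial derivatives are real, forcing $\gamma(\mathbf{v}) \in \P_{\R}^{1}$. Under the $2$-to-$1$ hypothesis, for $\mathbf{w} \in \Log|\cV_\R(f)|$ the fiber intersection $\F_{\mathbf{w}} \cap \cV(f)$ sits entirely inside $\cV_\R(f) \subset S(f)$, so Theorem \ref{Thm:Main1} puts $\mathbf{w}$ in $\partial \cA(f)$; for $\mathbf{w} \in \cA(f) \setminus \Log|\cV_\R(f)|$ the fiber preimages form a genuinely non-real complex-conjugate pair, and one checks that these escape $S(f)$, so $\mathbf{w}$ violates the necessary condition of Theorem \ref{Thm:Main1}. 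The step I expect to be hardest is $(2) \Rightarrow (3)$: converting the ``$2$-to-$1$ plus real'' condition into the global topological Harnack property requires a careful real-algebraic analysis of the branch locus on the toric compactification associated to $\New(f)$, and this is where the bulk of the Mikhalkin--Rullg{\aa}rd argument sits.
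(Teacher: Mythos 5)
The paper does not prove Proposition~\ref{Prop:Harnack}: it is stated and attributed to Mikhalkin and Rullg{\aa}rd, so there is no internal argument to compare yours against. The only related remark in the paper appears after Theorem~\ref{Thm:BoundaryArbitraryDimension}, where it is observed that if $\Log|_{\cV(f)}$ is $2$-to-$1$ then $\#\cV_\R(I)=2$ on every fiber, so either both roots of the fiber ideal are multiple or neither is, whence $\cC(f)=\partial\cA(f)$. The authors explicitly call this a \emph{partial} re-proof; it reaches the boundary identity through multiplicities of the realified fiber ideal, not through the logarithmic Gau{\ss} map as you do, and it does not touch the equivalences (1)--(3) at all.

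Taking your blind attempt on its own terms, two steps are asserted where a real argument is required. In $(1)\Rightarrow(2)$ you state that the fiber-swapping involution arising from the $2$-to-$1$ condition ``must coincide with complex conjugation after a toric rescaling''; this is a theorem, not a formality --- one has to show that the set-theoretic involution is antiholomorphic, extends across the branch locus and compactifies, and then classify the resulting real structure on $(\C^*)^2$. More seriously, for the boundary identity you claim that for $\mathbf{w}\in\cA(f)\setminus\Log|\cV_\R(f)|$ the conjugate pair of preimages ``escapes $S(f)$.'' For a general real curve this is false: a non-real point can perfectly well have real image under the logarithmic Gau{\ss} map, so $S(f)\supsetneq\cV_\R(f)$ is the generic situation. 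The identity $S(f)=\cV_\R(f)$, equivalently $\cC(f)=\Log|\cV_\R(f)|$, is precisely one of the structural consequences of the Harnack/$2$-to-$1$ hypothesis that needs proving, and as written your argument assumes it. A cleaner route to the ``furthermore'' part is the one the paper sketches: use Theorem~\ref{Thm:BoundaryArbitraryDimension} to get $\cC(f)=\partial\cA(f)$ from the $2$-to-$1$ condition, and then note that for a real curve whose generic fiber is a conjugate pair, a multiple fiber root must be a fixed point of conjugation and hence real, giving $\cC(f)\subseteq\Log|\cV_\R(f)|$ directly; combined with the easy inclusion $\Log|\cV_\R(f)|\subseteq\cC(f)$ this closes the circle without needing to decide membership in $S(f)$ for non-real points. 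Your use of Theorem~\ref{Thm:Main1} for the inclusion $\Log|\cV_\R(f)|\subseteq\partial\cA(f)$ is fine, modulo the non-singularity caveat in that theorem, which you should address by a genericity-and-closure argument.
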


The boundary and the contour of an amoeba do not coincide in general. Later, we see that the boundary and the contour for the amoeba introduced in Example \ref{Ex:Boundary} do not coincide for positive coefficients of the term $z_1z_2$, see Figure \ref{Fig:AmoebaBoundaryApproximation}. Further examples can be found in \cite{Mikhalkin:Annals,Passare:Tsikh:Survey,Theobald:ComputingAmoebas}. \\

In what follows we prove the first part of Theorem \ref{Thm:Main1}. Let
$\mathbf{v} \in (\C^*)^n$ with $\Log|\mathbf{v}| = \mathbf{w}$. After the
realification of $f$ the fiber $\F_{\mathbf{w}}$ is given by $\{(\mathbf{x},\mathbf{y})
\in \R^{2n} \ : \ x_j^2 + y_j^2 = |v_j|^2 \text{ for } 1 \leq j \leq n\}$, i.e.,
every fiber is a real variety in $\R^{2n}$ of codimension $n$. By construction we have
$\Fa[|\mathbf{v}|,\re] = f^{\re}_{|\F_{\mathbf{w}}}$, and analogously for the imaginary part. 
Thus, it does not matter whether we investigate the real part $\Fa[|\mathbf{v}|,\re]$ of the 
fiber function $\Fa[|\mathbf{v}|]$, given by the restriction of $f$ to $\F_{\mathbf{w}}$ 
or whether we first take the real part $f^{\re}$ of $f$ and restrict it afterwards to the fiber
$\F_{\mathbf{w}}$, since the bundle structure is preserved under the realification (see Section \ref{SubSec:Fibers}). Hence,
we have
\begin{eqnarray*}
	\cV(\Fa[|\mathbf{v}|,\re]) & \cong & \cV(f^{\re}) \cap \F_{\mathbf{w}},
\end{eqnarray*}
and analogously for the imaginary part. The following lemma describes the structure of
$\cV(\Fa[|\mathbf{v}|,\re])$.

\begin{lemma}
Let $f \in \C[\mathbf{z}]$ with $\cV(f) \subset (\C^*)^n$ non-singular and
$\Log|\mathbf{v}| = \mathbf{w} \in \R^n$ with $\F_{\mathbf{w}} \cap \cV(f) \neq
\emptyset$. Then generically $\cV(\Fa[|\mathbf{v}|,\re])$ and $\cV(\Fa[|\mathbf{v}|,\im])$ are
real smooth $(n-1)$-manifolds.
\label{Lem:DimFiberManifolds}
\end{lemma}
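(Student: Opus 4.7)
The plan is to identify $\cV(\Fa[|\mathbf{v}|,\re])$ with the zero set of a real-analytic function on a smooth compact manifold and then apply the regular value theorem together with a parametric transversality argument in $\mathbf{v}$. The case of $\cV(\Fa[|\mathbf{v}|,\im])$ is handled in exactly the same way, so I concentrate on the real part.

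First, by the discussion preceding the lemma, $\cV(\Fa[|\mathbf{v}|,\re]) = \cV(f^{\re}) \cap \F_{\mathbf{w}}$ coincides with the zero set of the restriction $g_{\mathbf{w}} := f^{\re}|_{\F_{\mathbf{w}}}$. Here $\F_{\mathbf{w}} = \{(\mathbf{x},\mathbf{y}) \in \R^{2n} : x_j^2 + y_j^2 = |v_j|^2,\, 1 \le j \le n\}$ is a smooth compact manifold diffeomorphic to $(S^1)^n$ of real dimension $n$, and $g_{\mathbf{w}}$ is real analytic. By the regular value theorem, $g_{\mathbf{w}}^{-1}(0)$ is automatically a non-singular real $(n-1)$-submanifold of $\F_{\mathbf{w}}$ as soon as $0$ is a regular value of $g_{\mathbf{w}}$. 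The lemma therefore reduces to showing that $0$ is a regular value of $g_{\mathbf{w}}$ for generic $\mathbf{v}$.

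Next I would establish this genericity by a parametric Sard argument. Consider the real-analytic map
\[
G : \R^n \times (S^1)^n \to \R, \qquad G(\mathbf{w},\boldsymbol{\phi}) = \RE\, f\bigl(e^{\mathbf{w} + \text{i}\boldsymbol{\phi}}\bigr),
\]
so that $g_{\mathbf{w}} = G(\mathbf{w},\cdot)$ under the identification $\F_{\mathbf{w}} \cong (S^1)^n$, and the discriminant locus
\[
B \;:=\; \bigl\{ (\mathbf{w},\boldsymbol{\phi}) \in \R^n \times (S^1)^n : G(\mathbf{w},\boldsymbol{\phi}) = 0 \text{ and } \nabla_{\boldsymbol{\phi}} G(\mathbf{w},\boldsymbol{\phi}) = 0 \bigr\}.
\]
$B$ is defined by $n+1$ real-analytic equations on the $2n$-dimensional manifold $\R^n \times (S^1)^n$. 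If $B$ has the expected real dimension at most $n-1$, its projection to the $\mathbf{w}$-factor has Lebesgue measure zero, and every $\mathbf{w}$ outside this projection makes $0$ a regular value of $g_{\mathbf{w}}$.

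The main technical obstacle is verifying that $B$ really has dimension at most $n-1$, i.e.\ that the $n+1$ defining equations are generically independent. Using the Cauchy-Riemann identities $\partial_{w_j} G = \RE(z_j \partial_{z_j} f)$ and $\partial_{\phi_j} G = -\IM(z_j \partial_{z_j} f)$ for $z_j = e^{w_j + \text{i} \phi_j}$, $B$ corresponds under the identification $\R^n \times (S^1)^n \cong (\C^*)^n$ to the locus where $f^{\re}(\mathbf{z}) = 0$ together with $z_j \partial_{z_j} f(\mathbf{z}) \in \R$ for all $j$. An open piece of $B$ of dimension at least $n$ would force all $\partial_{z_j} f$ to take real values simultaneously on an open subset of $\cV(f^{\re})$; combined with the non-singularity assumption on $\cV(f)$, which forces the holomorphic gradient of $f$ to be non-zero on $\cV(f)$, one derives a contradiction on the locus $\cV(f) \cap B$. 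This is the step where the non-singularity hypothesis is essentially used and where the word ``generically'' in the statement of the lemma is made precise.
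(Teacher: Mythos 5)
Your strategy---view $\cV(\Fa[|\mathbf{v}|,\re])$ as the zero set of the restriction of $f^{\re}$ to the compact torus $\F_{\mathbf{w}}$ and argue via the regular value theorem plus a parametric Sard argument---is in the same spirit as the paper's proof, which treats $\cV(\Fa[|\mathbf{v}|,\re])$ as the intersection in $\R^{2n}$ of the $(2n-1)$-dimensional non-singular hypersurface $\cV(f^{\re})$ with the $n$ cylinders $x_j^2+y_j^2 = |v_j|^2$ and invokes generic transversality of $n+1$ hypersurfaces. However, there are two genuine problems with your argument as written.

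First, the direction of genericity is wrong. The paper's remark immediately following the lemma specifies that ``generically'' means: for fixed $\mathbf{w}$, the set of \emph{polynomials} $g$ in a neighborhood of $f$ in the parameter space $\C^A$ for which the conclusion fails has positive codimension. Your argument fixes $f$ and perturbs $\mathbf{w}$, so you are establishing a different statement. This distinction matters downstream: the proofs that use this lemma (in particular the proofs of Theorem \ref{Thm:Main1} and Theorem \ref{Thm:BoundaryArbitraryDimension}) work with a fixed fiber $\F_{\mathbf{w}}$ and perturb the coefficients of $f$ inside $\cB^A_\eps(f)$, so genericity in $\mathbf{w}$ does not plug in where genericity in $f$ is needed.

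Second, your claimed dimension bound $\dim B \le n-1$ is false in general, and the sketch of how one ``derives a contradiction'' from non-singularity of $\cV(f)$ does not succeed, because $B$ lives inside $\cV(f^{\re})$ (a real $(2n-1)$-fold), not inside $\cV(f)$; the non-singularity of $\cV(f)$ controls $\cV(f)\cap B = S(f)$ but says nothing about the rest of $B$. A concrete counterexample: take $f = z_1\cdots z_n - c$ with $|c| = 1$ and $\RE(c)\neq 0$. Then $\cV(f)\subset(\C^*)^n$ is non-singular, yet $z_j\partial_{z_j} f = z_1\cdots z_n$ is the \emph{same} function for every $j$, so the $n$ equations $\IM(z_j\partial_{z_j} f)=0$ collapse to a single equation and $B = \{\RE(z_1\cdots z_n)=\RE(c),\ \IM(z_1\cdots z_n)=0\}$ has real dimension $2n-2 \ge n$. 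Thus $\dim B < n$ fails for every $n\ge 2$, and the Sard projection argument you rely on is not available (even though in this particular example the projection of $B$ to $\mathbf{w}$-space happens to be measure zero for other reasons, your proof does not establish that). The paper avoids this pitfall by working directly with transversality of the $n+1$ hypersurfaces under perturbation of $f$, rather than trying to bound the dimension of a discriminant locus in the $(\mathbf{w},\boldsymbol{\phi})$-parameter space.

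A small stylistic note: your auxiliary map $G$ clashes with the paper's notation $G$ for the Gau\ss{} map introduced in \eqref{Equ:GaussMap}; if you rework the argument you should rename it.
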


 Note that $\cV(\Fa[|\mathbf{z}|,\re])$ and $\cV(\Fa[|\mathbf{z}|,\im])$ are in
general neither connected nor smooth. In the lemma, the term ``\struc{\textit{generically}}'' means 
that if $\cV(\Fa[|\mathbf{v}|,\re])$ or $\cV(\Fa[|\mathbf{v}|,\im])$ is singular or their 
intersection has dimension lower than $n-1$, then the subset of all polynomials $g$ in 
the parameter space $(\C^*)^A$ with the same property and which are located in a
neighborhood of $f$ (with respect to the coefficient metric $d_A$) has codimension at 
least one. 

\begin{proof}(Lemma \ref{Lem:DimFiberManifolds})
We show only the real case, when $\cV(f^{\re})$ is a real, non-singular
$(2n-1)$-manifold in $\R^{2n}$. The fiber $\F_{\mathbf{w}}$ is
given by $n$ real, non-singular hypersurfaces in $\R^{2n}$ defined by
$x_j^2 + y_j^2 = |v_j|^2$, $1\leq j\leq n$. Since $\F_{\mathbf{w}} \cap \cV(f^{\re})
\neq \emptyset$ by assumption, it is generically the transverse, i.e., non-singular intersection of $n+1$
real, non-singular $(2n-1)$-manifolds, so it is a real,
generically non-singular manifold of dimension $(n+1)\cdot(2n-1)-n\cdot(2n)=n-1$.
\end{proof}

\begin{lemma}
Let $f \in \C[\mathbf{z}]$ with $\cV(f) \subset (\C^*)^n$. Then a point
$\mathbf{v} \in \cV(f)$ is critical under the $\Log|\cdot|$ map if and only if it is critical
under the $\Arg$ map.
\label{Lem:CriticalPointsLogArg}
\end{lemma}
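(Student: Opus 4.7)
The plan is to work in the local logarithmic coordinates provided by the fiber bundle structure from Section \ref{SubSec:Fibers} and to exploit the fact that $T_{\mathbf{v}}\cV(f)$ is a complex linear subspace of $T_{\mathbf{v}}(\C^*)^n$. I fix $\mathbf{v} \in \cV(f)$ and choose local real coordinates $(u_1,\ldots,u_n,\theta_1,\ldots,\theta_n)$ around $\mathbf{v}$ via $z_j = e^{u_j + \text{i}\theta_j}$. In these coordinates $\Log|\cdot|$ is the projection $(u,\theta) \mapsto u$ and $\Arg$ is the projection $(u,\theta) \mapsto \theta$, so their differentials at $\mathbf{v}$ have the complementary $n$-dimensional kernels $K_L := \Ker(d\Log|\cdot|_{\mathbf{v}}) = \text{span}_{\R}(\partial_{\theta_1},\ldots,\partial_{\theta_n})$ and $K_A := \Ker(d\Arg_{\mathbf{v}}) = \text{span}_{\R}(\partial_{u_1},\ldots,\partial_{u_n})$, in accordance with the decomposition \eqref{Equ:Orthogonal} applied to $T_{\mathbf{v}}(\C^*)^n$.

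Next I bring in the complex structure $J$ (multiplication by $\text{i}$). Since $\partial z_j/\partial u_j = z_j$ and $\partial z_j/\partial \theta_j = \text{i}\, z_j$, one obtains $J \partial_{u_j} = \partial_{\theta_j}$, so $J(K_A) = K_L$. Because $\cV(f)$ is a complex submanifold of $(\C^*)^n$, the tangent space $T_{\mathbf{v}}\cV(f)$ is $J$-invariant, and therefore $J$ restricts to an $\R$-linear isomorphism between $T_{\mathbf{v}}\cV(f) \cap K_A$ and $T_{\mathbf{v}}\cV(f) \cap K_L$. This yields the key identity
\begin{eqnarray*}
\dim_{\R}\!\lf(T_{\mathbf{v}}\cV(f) \cap K_L\ri) \ = \ \dim_{\R}\!\lf(T_{\mathbf{v}}\cV(f) \cap K_A\ri).
\end{eqnarray*}

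To close, I translate this equality into the equivalence of criticality by rank-nullity. The real dimension of $T_{\mathbf{v}}\cV(f)$ equals $2(n-1)$, so the rank of $d\Log|\cdot|$ restricted to $T_{\mathbf{v}}\cV(f)$ equals $2(n-1) - \dim_{\R}(T_{\mathbf{v}}\cV(f) \cap K_L)$, and the analogous formula with $K_A$ holds for $\Arg$. By the critical point definition from the introduction, $\mathbf{v}$ is critical under $\Log|\cdot|$ iff this rank is strictly less than $n = \min\{2(n-1),n\}$, i.e., iff $\dim_{\R}(T_{\mathbf{v}}\cV(f) \cap K_L) \geq n-1$, and the same equivalence holds for $\Arg$ with $K_A$ in place of $K_L$. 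The dimension identity of the previous paragraph then finishes the argument. The main obstacle I anticipate is purely bookkeeping: one must verify that the $J$-invariance of $T_{\mathbf{v}}\cV(f)$ is transported cleanly through the local biholomorphic chart $\Log_\C$ and that the rank-count remains correct even when $T_{\mathbf{v}}\cV(f)$ fails to project surjectively onto either complementary factor $K_L, K_A$. Both points are automatic once one notes that $\Log_\C$ is a local biholomorphism and $K_L \oplus K_A = T_{\mathbf{v}}(\C^*)^n$, so no substantively new idea is required beyond the interplay of the complex structure with the log--arg decomposition.
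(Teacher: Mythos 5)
Your proof is correct and takes a genuinely different, cleaner route than the paper's. Both arguments work in local logarithmic coordinates and use the decomposition $T_{\mathbf{v}}(\C^*)^n = K_L \oplus K_A$ (cf.\ \eqref{Equ:Orthogonal}) with $K_L = \Ker(d\Log|\cdot|)$ and $K_A = \Ker(d\Arg)$, but where the paper assumes criticality under $\Log|\cdot|$, builds an orthogonal basis of $T_{\mathbf{v}}\cV(f)$ adapted to $T_{\mathbf{v}}\cV(f)\cap K_L$, argues that the complementary basis vectors land in $K_A$, and then dismisses the converse as ``analogous,'' you instead put your finger on the underlying structural fact: the complex structure $J$ satisfies $J\partial_{u_j}=\partial_{\theta_j}$, hence $J(K_A)=K_L$, and $T_{\mathbf{v}}\cV(f)$ is $J$-invariant since $\cV(f)$ is a complex submanifold. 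This gives $\dim_\R\bigl(T_{\mathbf{v}}\cV(f)\cap K_L\bigr)=\dim_\R\bigl(T_{\mathbf{v}}\cV(f)\cap K_A\bigr)$ in one line, and rank--nullity then identifies the two criticality conditions. Your approach buys you two things: the argument is symmetric in $\Log|\cdot|$ and $\Arg$ from the outset, so there is no ``vice versa'' step to check, and it isolates exactly where holomorphicity enters --- the paper's step in which the orthogonal complement of $T_{\mathbf{v}}\cV(f)\cap K_L$ inside $T_{\mathbf{v}}\cV(f)$ is claimed to lie in $K_A$ is really the assertion that $T_{\mathbf{v}}\cV(f) = \bigl(T_{\mathbf{v}}\cV(f)\cap K_L\bigr)\oplus\bigl(T_{\mathbf{v}}\cV(f)\cap K_A\bigr)$, which is not a consequence of orthogonality alone but is supplied by the $J$-invariance you invoke explicitly.
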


 This statement follows already at least implicitly from Mikhalkin's
argumentation on the logarithmic Gau\ss{} map \cite{Mikhalkin:Annals} and was also
observed by Nisse and Passare \cite{Nisse:LogGauss} before. For convenience, we give an independent proof in the Appendix \ref{Sec:Appendix}. 

We denote the \struc{\textit{tangent space}} at a smooth point $\mathbf{v}$ of a manifold $\cM$ as $\struc{T_{\mathbf{v}}\cM}$ and its \struc{\textit{orthogonal complement}} in the ambient space of $\cM$ as $\struc{(T_{\mathbf{v}}\cM)^\perp}$.

\begin{lemma}
Let $f \in \C[\mathbf{z}]$ with $\cV(f) \subset (\C^*)^n$ smooth and let
$\mathbf{v} \in \cV(f)$ be a non-critical point under the $\Arg$ map with
$\Arg(\mathbf{v}) = \boldsymbol{\phi} \in [0,2\pi)^n$. Then $T_{\boldsymbol{\phi}}\cV(\Fa[|\mathbf{v}|,\re]) \neq
T_{\boldsymbol{\phi}}\cV(\Fa[|\mathbf{v}|,\im])$ and furthermore both $\cV(\Fa[|\mathbf{v}|,\re])$ and 
$\cV(\Fa[|\mathbf{v}|,\im])$ are smooth at $\boldsymbol{\phi}$.
\label{Lem:RegularPointsonFiber}
\end{lemma}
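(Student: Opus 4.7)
The plan is to compute the real gradients of $\Fa[|\mathbf{v}|,\re]$ and $\Fa[|\mathbf{v}|,\im]$ at $\boldsymbol{\phi}$ directly from the fiber-function formula, and to recognize that their $\R$-linear dependence is exactly the condition $\gamma(\mathbf{v})\in\P^{n-1}_\R$. Combined with Mikhalkin's Theorem \ref{Thm:MikhalkinContour} and Lemma \ref{Lem:CriticalPointsLogArg}, this will contradict the hypothesis that $\mathbf{v}$ is non-critical under $\Arg$, giving both non-singularity and distinctness of the two tangent spaces at once.

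First I would differentiate $\Fa[|\mathbf{v}|](\boldsymbol{\phi}) = f(|v_1|e^{\text{i}\phi_1},\ldots,|v_n|e^{\text{i}\phi_n})$ componentwise; the chain rule immediately gives
\[
  \frac{\partial \Fa[|\mathbf{v}|]}{\partial \phi_j}(\boldsymbol{\phi})
  \;=\; \text{i}\,v_j\,\frac{\partial f}{\partial z_j}(\mathbf{v}) \qquad (j=1,\ldots,n).
\]
Writing $v_j\partial_j f(\mathbf{v}) = R_j + \text{i}\,I_j$ with $R_j, I_j \in \R$ and separating real and imaginary parts yields
\[
  \nabla \Fa[|\mathbf{v}|,\re](\boldsymbol{\phi}) \;=\; -(I_1,\ldots,I_n),
  \qquad
  \nabla \Fa[|\mathbf{v}|,\im](\boldsymbol{\phi}) \;=\; (R_1,\ldots,R_n).
\]
Non-singularity of $\cV(f)$ at $\mathbf{v}$ guarantees that at least one of the $v_j\partial_j f(\mathbf{v})$ is nonzero, so $\gamma(\mathbf{v})$ is well defined.

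Next, I would observe that $\cV(\Fa[|\mathbf{v}|,\re])$ and $\cV(\Fa[|\mathbf{v}|,\im])$ are locally each cut out by a single real equation on $(S^1)^n$, so by the implicit function theorem each is non-singular at $\boldsymbol{\phi}$ iff the corresponding gradient is nonzero, in which case the tangent space is exactly the kernel of that gradient. Hence both non-singularity statements and the inequality of tangent spaces follow at once from $\R$-linear independence of the two gradient vectors above. A nontrivial real relation $\beta R_j - \alpha I_j = 0$ for all $j$ rewrites as $\RE((\beta + \text{i}\alpha)\cdot v_j\partial_j f(\mathbf{v}))=0$ for all $j$, so the vector $(v_1\partial_1 f(\mathbf{v}),\ldots,v_n\partial_n f(\mathbf{v}))$ lies on a single real line in $\C^n$; equivalently, $\gamma(\mathbf{v})\in\P^{n-1}_\R$. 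By Theorem \ref{Thm:MikhalkinContour} this says $\mathbf{v}\in S(f)$, and by Lemma \ref{Lem:CriticalPointsLogArg} this contradicts the non-criticality of $\mathbf{v}$ under $\Arg$.

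I expect the main subtlety to be the careful translation between the linear-algebraic proportionality condition on the two real gradients and the projective condition $\gamma(\mathbf{v})\in\P^{n-1}_\R$; the calculation itself is short, but one must handle signs and the degenerate case in which all $v_j\partial_j f(\mathbf{v})$ simultaneously vanish, which is ruled out by non-singularity of $\cV(f)$. Everything else is a direct application of the previously established results.
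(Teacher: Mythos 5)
Your proof is correct, and it takes a genuinely different route from the paper. The paper argues by contrapositive at the level of dimensions: assuming the two tangent spaces coincide (or one variety is singular), it counts $\dim T_{\boldsymbol{\phi}}\cV(\Fa[|\mathbf{v}|]) = n-1$ using Lemma \ref{Lem:DimFiberManifolds}, notes this $(n-1)$-dimensional subspace lies in $T_{\mathbf{v}}\cV(f)\cap T\F_{\mathbf{w}}$ and hence in the kernel of $D\Log|\cdot|$, and then invokes the argument of Lemma \ref{Lem:CriticalPointsLogArg} to conclude $\mathbf{v}$ is critical. Your route instead computes the two gradients explicitly via the chain rule, writes them as $\mp$ the imaginary and real parts of the vector $\bigl(v_j\,\partial_j f(\mathbf{v})\bigr)_j$, and observes that their $\R$-linear dependence (including the case where one vanishes) is \emph{literally} the condition $\gamma(\mathbf{v})\in\P_\R^{n-1}$, i.e.\ $\mathbf{v}\in S(f)$ by definition, hence critical under $\Log|\cdot|$ by Theorem \ref{Thm:MikhalkinContour} and under $\Arg$ by Lemma \ref{Lem:CriticalPointsLogArg}. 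This buys a completely explicit and self-contained identification of the failure locus with the log-Gauss reality condition, whereas the paper's proof leaves that identification implicit behind dimension arithmetic and Lemma \ref{Lem:DimFiberManifolds}; in particular your version makes transparent why non-singularity of $\cV(f)$ rules out both gradients vanishing simultaneously, a point the paper asserts but does not spell out. One small attribution slip: $\gamma(\mathbf{v})\in\P_\R^{n-1}$ is the \emph{definition} of $\mathbf{v}\in S(f)$; Theorem \ref{Thm:MikhalkinContour} is what converts membership in $S(f)$ into criticality of $\Log|\cdot|$. The chain of reasoning is unaffected.
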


 Note that $T_{\boldsymbol{\phi}}\cV(\Fa[|\mathbf{v}|,\re])$ is the tangent space of the variety $\cV(\Fa[|\mathbf{v}|,\re])$ of the real part of the fiber function $\Fa[|\mathbf{v}|]$ at $\boldsymbol{\phi}$ (analogously for the imaginary part). So,
$T_{\boldsymbol{\phi}}\cV(\Fa[|\mathbf{v}|,\re])$ is a subset of real dimension $n-1$ of $(S^1)^n$ (isomorphic to the fiber
$\F_{\mathbf{w}}$), and hence of codimension one if $\cV(\Fa[|\mathbf{v}|,\re])$ is smooth at $\boldsymbol{\phi}$.

\begin{proof}
Assume first $T_{\boldsymbol{\phi}}\cV(\Fa[|\mathbf{v}|,\re])
= T_{\boldsymbol{\phi}}\cV(\Fa[|\mathbf{v}|,\im])$ and
$\cV(\Fa[|\mathbf{v}|,\re]),\cV(\Fa[|\mathbf{v}|,\im])$ are smooth at
$\boldsymbol{\phi}$. Since
$T_{\boldsymbol{\phi}}\cV(\Fa[|\mathbf{v}|]) =
T_{\boldsymbol{\phi}}\cV(\Fa[|\mathbf{v}|,\re]) \cap
T_{\boldsymbol{\phi}}\cV(\Fa[|\mathbf{v}|,\im])$ and
$\dim~\cV(\Fa[|\mathbf{v}|,\re]) = \dim~\cV(\Fa[|\mathbf{v}|,\im]) = n-1$ by Lemma
\ref{Lem:DimFiberManifolds}, we have
$\dim~T_{\boldsymbol{\phi}}\cV(\Fa[|\mathbf{v}|]) = n-1$. Since
$\cV(\Fa[|\mathbf{v}|]) \cong \cV(f) \cap \F_{\mathbf{w}}$ for $\mathbf{w} =
\Log|\mathbf{v}|$ and hence, $T_{\boldsymbol{\phi}}\cV(\Fa[|\mathbf{v}|]) \cong
T_{\boldsymbol{\phi}}\cV(f) \cap \F_{\mathbf{w}}$, there is an immersion of an
$(n-1)$-dimensional subspace of $T_{\mathbf{v}}\cV(f)$
into $\Arg(T_{\mathbf{v}}\cV(f))$, which yields that $\mathbf{v}$ is critical by the same argument as in Lemma \ref{Lem:CriticalPointsLogArg}.

Assume $\cV(\Fa[|\mathbf{v}|,\re])$ is singular at $\boldsymbol{\phi}$. Then
$T_{\boldsymbol{\phi}}\cV(\Fa[|\mathbf{v}|,\re]) = [0,2\pi)^n$ and hence,
$T_{\boldsymbol{\phi}}\cV(\Fa[|\mathbf{v}|]) =
T_{\boldsymbol{\phi}}\cV(\Fa[|\mathbf{v}|,\im])$. Since $\cV(f)$ is smooth,
$\cV(\Fa[|\mathbf{v}|,\im])$  may not be singular at $\boldsymbol{\phi}$ either. Therefore, $\dim(T_{\boldsymbol{\phi}}\cV(\Fa[|\mathbf{v}|])) = n-1$. The rest works in the same way as above.
\end{proof}

 We are now ready to prove the main result of this section, namely the
first part of Theorem \ref{Thm:Main1} given by the Implication \eqref{Equ:MainTheorem}.
The idea of proof is that if there exists a point $\mathbf{v} \in \cV(f) \cap
\F_{\mathbf{w}}$ with $\mathbf{v} \notin S(f)$, then the manifolds
$\cV(\Fa[|\mathbf{v}|,\re])$ and $\cV(\Fa[|\mathbf{v}|,\im])$ intersect regularly in
$\Arg(\mathbf{v})$. But this means that they also intersect for every small perturbation
of the coefficients of $f$. This is a contradiction to the assumption $\mathbf{w}
\in \partial^{e} \cA(f)$, which means exists a small perturbation of the coefficients
yielding $\cV(f) \cap \F_{\mathbf{w}} = \emptyset$, i.e., $\cV(\Fa[|\mathbf{v}|,\re])
\cap \cV(\Fa[|\mathbf{v}|,\im]) = \emptyset$. 

\begin{proof}\textit{(Theorem \ref{Thm:Main1}, Implication \eqref{Equ:MainTheorem})}
Let $\mathbf{w} \in \partial^{e} \cA(f)$ and assume that there exists $\mathbf{v} \in \cV(f)
\cap \F_{\mathbf{w}}$ with $\mathbf{v} \notin S(f)$. By Theorem
\ref{Thm:MikhalkinContour}, $\mathbf{v}$ is a non-critical point under
the $\Log|\cdot|$ map and hence, by Lemma \ref{Lem:CriticalPointsLogArg}, $\mathbf{v}$ is a non-critical point under the $\Arg$ map, too. Thus, by Lemma \ref{Lem:RegularPointsonFiber} $T_{\boldsymbol{\phi}}\cV(\Fa[|\mathbf{v}|,\re]) \neq T_{\boldsymbol{\phi}}\cV(\Fa[|\mathbf{v}|,\im])$, and both $\cV(\Fa[|\mathbf{v}|,\re])$ and $\cV(\Fa[|\mathbf{v}|,\im])$ are regular at
$\boldsymbol{\phi}$, and thus also in a neighborhood $U_{\boldsymbol{\phi}} \subset (S^1)^n \cong \F_{\mathbf{w}}$. Therefore,
$\cV(\Fa[|\mathbf{v}|,\re])$ and $\cV(\Fa[|\mathbf{v}|,\im])$ intersect regularly in $U_{\boldsymbol{\phi}}$. Hence, there
exists a $\delta > 0$ such that in $U_\phi$ the intersection of every $\delta$-perturbation of
$\cV(\Fa[|\mathbf{v}|,\re])$ and $\cV(\Fa[|\mathbf{v}|,\im])$ is not empty.

Since $\mathbf{w} \in \partial^{e} \cA(f)$ we find some $g \in \cB_{\eps}^A(f) \subset (\C^*)^A$ for
every arbitrary small $\eps > 0$ such that $\mathbf{w} \notin \cA(g)$, i.e., $\cV(g) \cap
\F_{\mathbf{w}} = \emptyset$, and thus in particular $\cV(g^{|\mathbf{v}|,\re})
\cap \cV(g^{|\mathbf{v}|,\im}) = \emptyset$. The functions $f^{\re}, f^{\im}$ are continuous 
under a change of the coefficients of $f$. Hence, also
the regular loci of $\cV(f^{\re}), \cV(f^{\im})$ are continuous under a sufficiently small change the coefficients of $f$.
Therefore, by definition of the fiber function, $\cV(g^{|\mathbf{v}|,\re})$ and
$\cV(g^{|\mathbf{v}|,\im})$ are arbitrary small perturbations of
$\cV(\Fa[|\mathbf{v}|,\re])$ and $\cV(\Fa[|\mathbf{v}|,\im])$ in a neighborhood of the regular point $\boldsymbol{\phi}$. Thus,
$\cV(\Fa[|\mathbf{v}|,\re])$ and $\cV(\Fa[|\mathbf{v}|,\im])$ may not intersect
regularly in $\boldsymbol{\phi}$. This is a contradiction.
\end{proof}

\begin{figure}[ht]
\ifpictures
	\includegraphics[width=0.4\linewidth]{./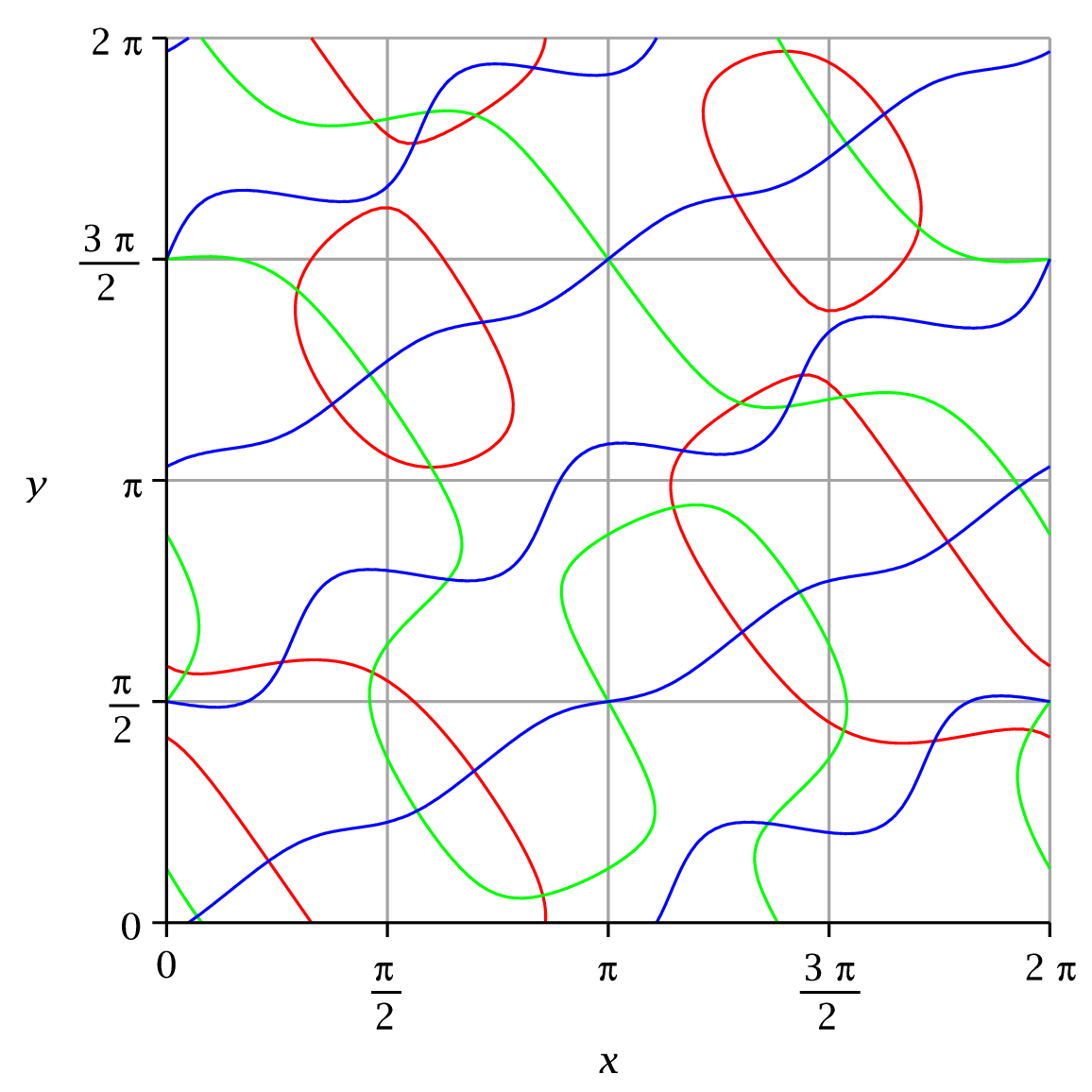}
	\includegraphics[width=0.4\linewidth]{./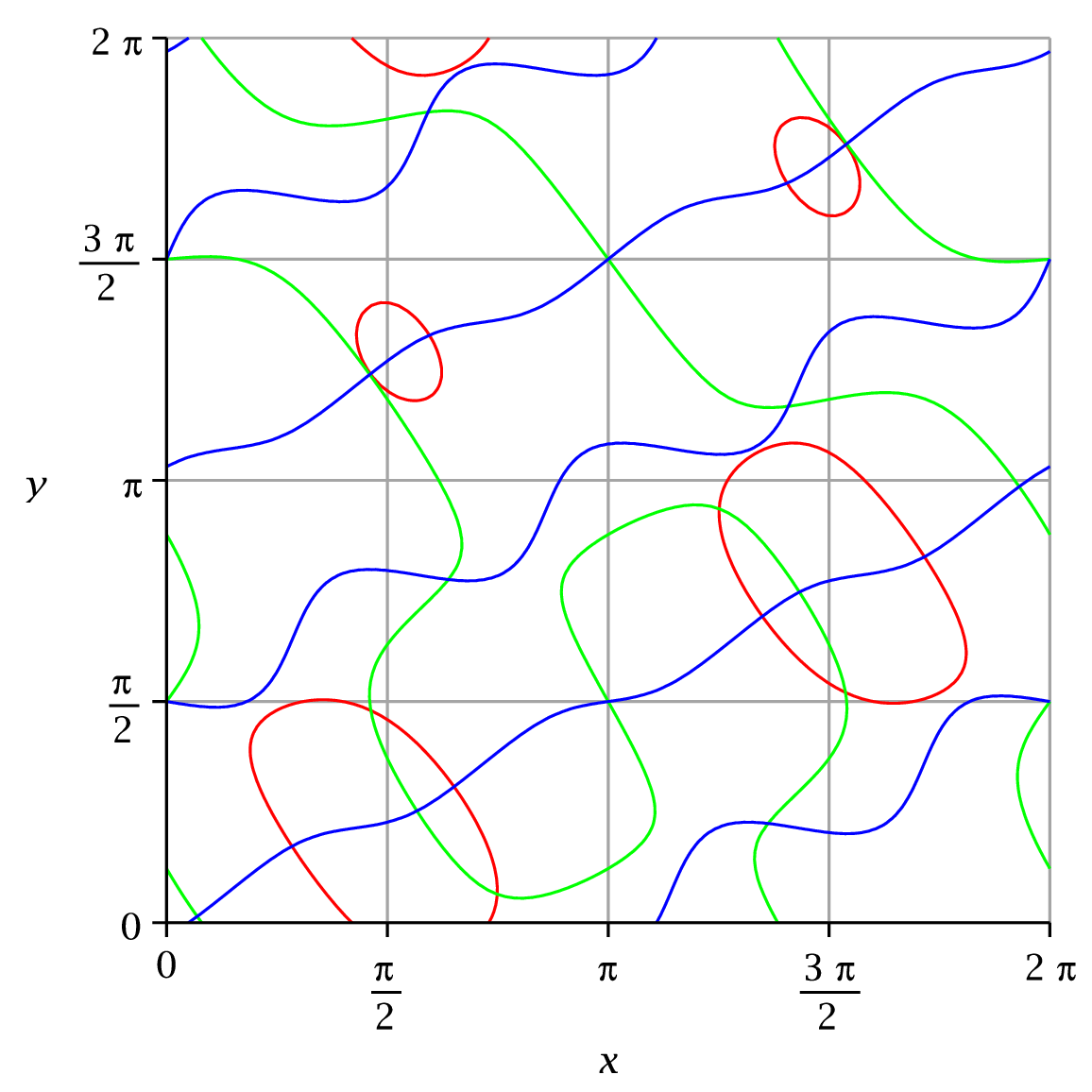}
	\includegraphics[width=0.4\linewidth]{./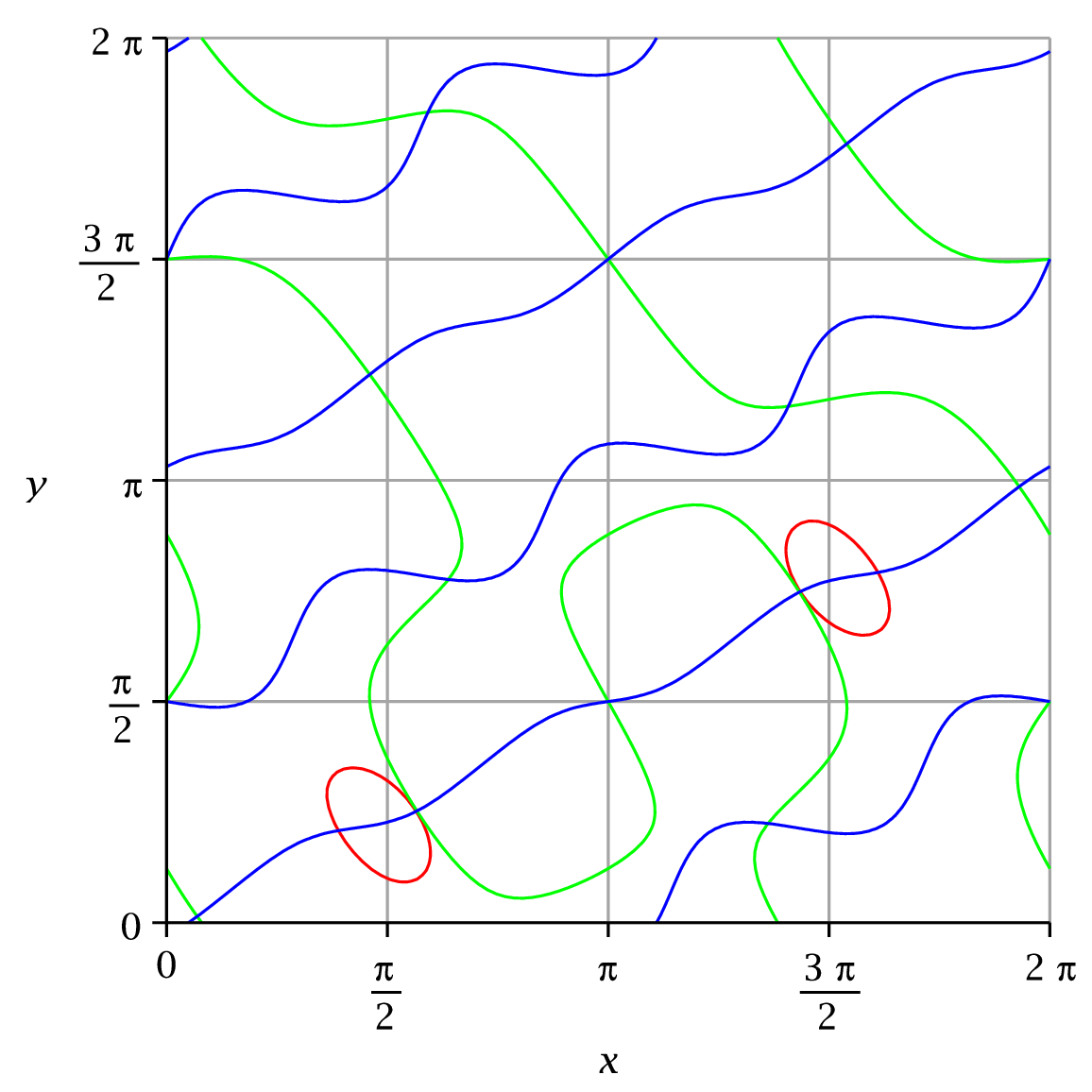}
	\includegraphics[width=0.4\linewidth]{./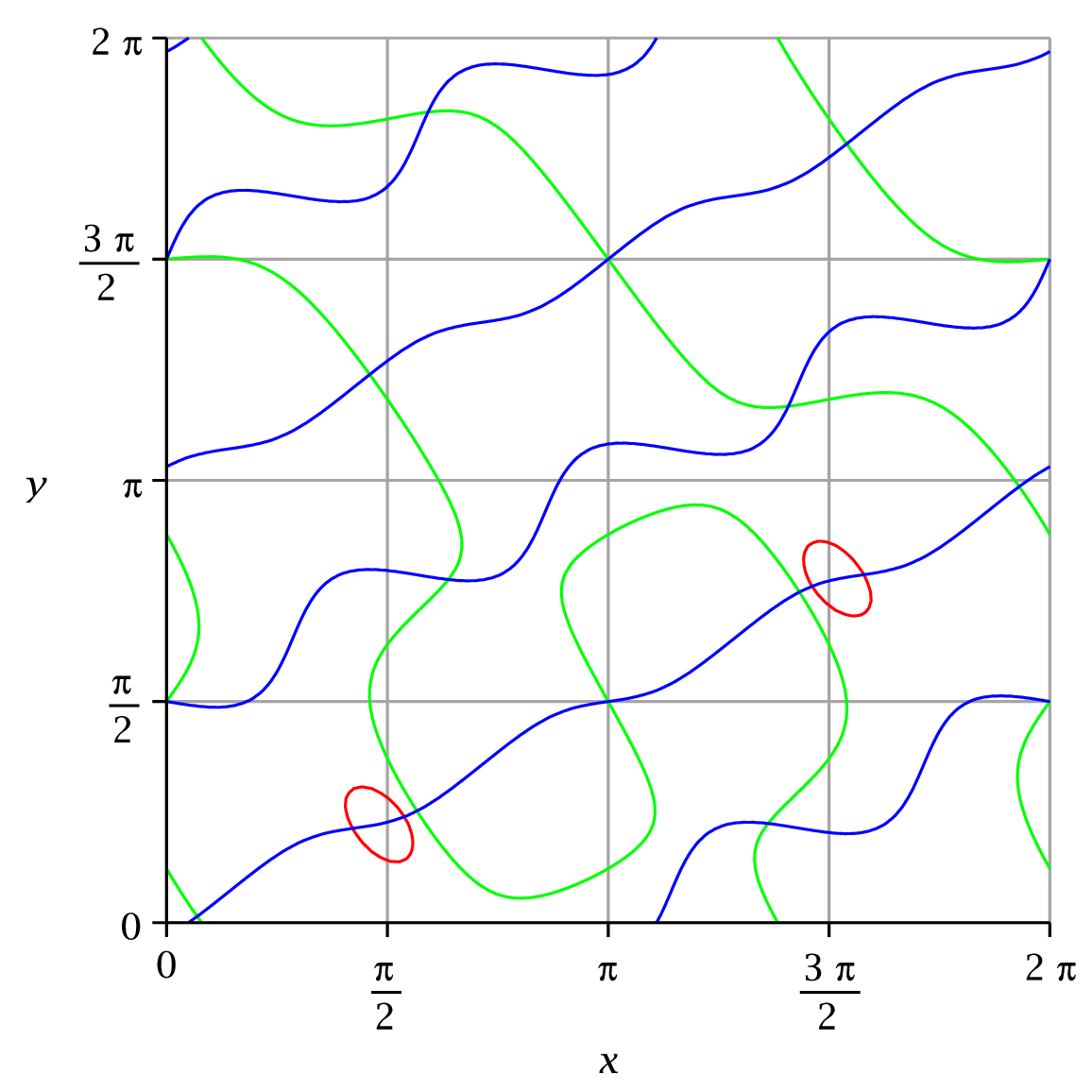}
\fi
	\caption{The behavior of $f = -2 z_1^2 - 2 z_1z_2^2 + 1,5 e^{\text{i} \pi \cdot
0,5}
z_1^{-1}z_2^{-1} + c$ on the fiber $\F_{(0,0)}$ for $c \in \{-1.2,-2.7,-4.6,-4.9\}$.}
	\label{Fig:BoundaryFiber}
\end{figure}

\noindent We finish this section with an example. For simplicity, we do not distinguish here between a fiber 
$\F_{\Log|\mathbf{v}|}$ and the corresponding torus $(S^1)^n$ of the fiber function 
$f^{|\mathbf{v}|}: (S^1)^n \to (\C^*)^n$ by slight abuse of notation.

\begin{exa}
Let $f$ be a Laurent polynomial given by 
\begin{eqnarray*}
	  f & = & -2 z_1^2 - 2 z_1z_2^2 + 1,5 e^{\text{i} \pi \cdot 0.5} z_1^{-1}z_2^{-1}
+ c.
\end{eqnarray*}
Consider the fiber $\F_{(0,0)}$ of the point $\Log|(1,1)|$ for $c = -1.2, -2.7,
-4.6$ and $-4.9$ shown in Figure \ref{Fig:BoundaryFiber}. In all pictures the red curve
corresponds to $\cV(\Fa[|(1,1)|,\re])$ and the green curve corresponds to
$\cV(\Fa[|(1,1)|,\im])$ in $\F_{(0,0)}$. Hence, the points in the intersection of the red and the green
curve are the points where the real and the imaginary part of $\Fa[|(1,1)|]$ vanish,
i.e., these are the intersection points of the fiber $\F_{(0,0)}$ with the variety
$\cV(f)$.

The blue curve presents the argument of points on the complex unit sphere, which are
critical points under the logarithmic Gau\ss{} map, i.e., the critical points of $\gamma$
on the fiber $\F_{(0,0)}$. Thus, by Corollary \ref{Cor:MikhalkinBoundary}, $(0,0)$ is part of the contour if there is a point where
the red, green and blue curve intersect and, by Theorem \ref{Thm:Main1}, $(0,0)$ may only be a boundary point if all intersection points of the red and the green
curve also intersect the blue one. Note that in this example a change of the coefficient $c$ along the real axis only changes the red curve.

Furthermore, observe for $c = -1.2$ in the upper left of Figure
\ref{Fig:BoundaryFiber} the red and green curve intersect regularly in several points and
hence in this case $(0,0) \in \cA(f)$.  On the upper right pictures with $c = -2.7$, there are two intersection points where all three curves intersect. But there
are other points where (only) the red and the green curve intersect regularly. Thus, in
this case $(0,0)$ is part of the contour but still $(0,0) \in \cA(f) \setminus \partial^{e}
\cA(f)$. On the lower left picture with $c =-4.6$ the only two intersection
points of the red and the green curve also intersect the blue one. Hence, in this case
$(0,0)$ might be part of the extended boundary. On the lower right picture with $c
=-4.9$, the red and the green curve do not intersect in any point anymore. Therefore, we
have $(0,0) \notin \cA(f)$.
\endenvi
\end{exa}

Note that the values $c = -2.7$ and $c = -4.6$ are not the exact values of $c$ such that the 
point $(0,0)$ lies in the contour or in the extended boundary of $\cA(f)$. They
are approximations of the exact values in order to visualize the situation.

\section{Characterization of the Extended Boundary}
\label{Sec:GenericallySufficient}

In the first part of this section we finish the proof of Theorem \ref{Thm:Main1}. Afterwards, 
we point out that the approach used to prove Theorem \ref{Thm:Main1} allows an alternative 
description of amoebas, their contour and their extended boundary by using the real locus of a 
variety of a particular ideal instead of critical points of the $\Log|\cdot |$ map and the 
logarithmic Gau\ss \ map. We formulate these results in Theorem 
\ref{Thm:BoundaryArbitraryDimension}. This alternative approach also yields a 
decomposition of the ambient space of an amoeba, which is given by the $0$-th Betti number of 
the real locus of the variety of the ideal mentioned above. In Theorem 
\ref{Thm:BettiDecompositionContour} we show that the contour and the extended boundary of an amoeba arise as certain intersections of cells in this decomposition.\\

To complete the proof of Theorem \ref{Thm:Main1} we need some facts. First, we characterize the critical locus of a hypersurface $\cV(f)$ with respect to the $\Log|\cdot|$ map. The following statement is known in the community, but unfortunately we did not find a reference.

\begin{lemma}
Let $f \in \C[\mathbf{z}]$ with $\cV(f) \subset (\C^*)^n$. Then the critical
locus $S(f) \subset \cV(f)$ is a real algebraic $(n-1)$-variety in $\R^{2n}$.
\label{Lem:CritPtsVar}
\end{lemma}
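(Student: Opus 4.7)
The strategy is twofold: first, write down explicit real polynomial equations cutting out $S(f)$ in $\R^{2n}$; second, use Mikhalkin's characterization of the contour as a real-analytic hypersurface to pin down the dimension.

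For the first step, I use that a point $(w_1 : \ldots : w_n) \in \P_\C^{n-1}$ lies in $\P_\R^{n-1}$ if and only if $(w_1,\ldots,w_n)$ is a complex scalar multiple of a real vector, which is equivalent to the vanishing of all $2\times 2$ minors
\[
w_j \overline{w_k} - w_k \overline{w_j} \ = \ 2\text{i}\cdot \IM(w_j\overline{w_k}) \ = \ 0, \qquad 1 \leq j < k \leq n.
\]
Applied to $w_j(\mathbf{z}) = z_j \cdot \partial f/\partial z_j(\mathbf{z})$ and after the realification $z_j = x_j + \text{i}y_j$, each product $w_j\overline{w_k}$ becomes a polynomial in $\C[\mathbf{x},\mathbf{y}]$, and taking its imaginary part yields a polynomial $h_{jk} \in \R[\mathbf{x},\mathbf{y}]$. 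Together with the realification equations $f^{\re} = 0$ and $f^{\im} = 0$ defining $\cV(f)$ inside $\R^{2n}$, the set $S(f)$ is the zero locus in $\R^{2n}$ of the finitely many real polynomials $f^{\re}, f^{\im}, \{h_{jk}\}_{1\leq j<k\leq n}$, and is therefore a real algebraic variety.

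For the dimension claim, Mikhalkin's Theorem \ref{Thm:MikhalkinContour} identifies $S(f)$ with the critical locus of $\Log|\cdot|_{\cV(f)}$, so that $\Log|S(f)| = \cC(f)$. Since the contour $\cC(f)$ is a closed real-analytic hypersurface in $\R^n$ (by Passare-Tsikh, as cited in the introduction), we have $\dim \cC(f) = n-1$, and by continuity of $\Log|\cdot|$ this gives $\dim S(f) \geq n-1$. For the reverse inequality, I would argue that at a generic smooth point of $\cV(f)$ the logarithmic Gau\ss \ map $\gamma: \cV(f)\to \P_\C^{n-1}$ has maximal real rank $2(n-1)$; since $\P_\R^{n-1}\subset \P_\C^{n-1}$ is a real-analytic submanifold of real codimension $n-1$, the implicit function theorem then yields that $\gamma^{-1}(\P_\R^{n-1}) = S(f)$ has real codimension $n-1$ in the realified $(2n-2)$-dimensional manifold $\cV(f)$, hence real dimension $n-1$.

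The main technical obstacle is justifying that $\gamma$ generically has full rank. This holds whenever $\gamma$ is dominant, which is ensured by the assumption that $\cV(f)$ is non-singular together with the fact that $\cC(f)$ is a genuine hypersurface (a degenerate $\gamma$ would force $\cC(f)$ to have lower dimension, contradicting Passare-Tsikh). A careful treatment might require stratifying $\cV(f)$ by the rank of $\gamma$ and observing that strata where the rank drops contribute only components of real dimension strictly less than $n-1$ to $S(f)$, which do not affect the claimed equidimensionality at the generic level.
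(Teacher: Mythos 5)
Your proof is correct and follows essentially the same two-step plan as the paper: first exhibit $S(f)$ as the real zero locus of $f^{\re}$, $f^{\im}$ and a family of real polynomials encoding that the entries of $\gamma$ agree up to a real scalar, then deduce $\dim S(f) = n-1$ by a tangent-space / implicit-function-theorem argument applied to $\gamma$. Your form of the reality condition, $\IM\bigl(w_j\overline{w_k}\bigr)=0$, is a cleaner encoding than the paper's displayed norm-product identity, and you are more explicit than the paper about the need to justify (via genericity of full rank of $\gamma$) the codimension count that the paper's tangent-space argument tacitly assumes.
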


\begin{proof}
 By Theorem \ref{Thm:MikhalkinContour} the critical locus $S(f)$ of $f$ is given by all points in $\cV(f)$ which have projective real image under the logarithmic Gau\ss \, map.
Hence, a point $\mathbf{v} \in \R^{2n}$ is contained in $S(f)$ if and only if $f^{\re}(\mathbf{v}) = f^{\im}(\mathbf{v}) = 0$ and for all $j,k \in \{1,\ldots,n\}$ with $j \neq k$ it holds that
\begin{eqnarray}
	\left(||v_j \frac{\partial f}{\partial z_j}(\mathbf{v})||^2 \cdot v_k \frac{\partial f}{\partial z_k}(\mathbf{v}) + ||v_k \frac{\partial f}{\partial z_k}(\mathbf{v})||^2 \cdot v_j \frac{\partial f}{\partial z_j}(\mathbf{v})\right) & \cdot & \\ \label{Equ:ProofCharacterizationCriticalLocus}
\left(||v_j \frac{\partial f}{\partial z_j}(\mathbf{v})||^2 \cdot v_k \frac{\partial f}{\partial z_k}(\mathbf{v}) - ||v_k
\frac{\partial f}{\partial z_k}(\mathbf{v})||^2 \cdot v_j \frac{\partial f}{\partial z_j}(\mathbf{v})\right) & = & 0. \nonumber
\end{eqnarray}
The latter means that the $j$-th and the $k$-th entry of the image of $\mathbf{v}$ under the logarithmic Gau\ss \ map only differ by a real scalar. Since after a realification of \eqref{Equ:ProofCharacterizationCriticalLocus} all equations are
given by polynomials in $\R[\mathbf{x},\mathbf{y}]$, the critical locus is a real algebraic variety. For every smooth point $\mathbf{v}$ of
$S(f)$  the tangent space $T_{\mathbf{v}}S(f)$ of the critical locus at the point $\mathbf{v}$ is given by the subset of $T_{\mathbf{v}}\cV(f)$ of $\cV(f)$ which keeps the argument of the logarithmic Gau\ss \ image invariant. Thus, the real dimension is $n-1$.
\end{proof}

The next lemma is a statement about the logarithmic Gau\ss \ map by Mikhalkin
\cite[Lemma 2]{Mikhalkin:Annals} (note that he uses a different volume form); see also \cite{Passare:Tsikh:Survey}.

\begin{lemma}
Let $A \subset \Z^n$ be finite, $f \in (\C^*)^A$ with $\cV(f) \subset (\C^*)^n$ and 
$\mathbf{t} \in \P_{\C}^{n-1}$. Let $\gamma$ denote the logarithmic Gau\ss \ map. Then 
$\gamma$ has degree 
$\vol(\New(f))$. More precisely, if the pair $(f,t)$ is generic, then the set
\begin{eqnarray*}
	N_{\mathbf{t}} & = & \{\mathbf{z} \in (\C^*)^n \ : \ \mathbf{z} \in \cV(f) \cap \gamma^{-1}(\mathbf{t})\},
\end{eqnarray*}
has cardinality $\vol(\New(f))$ and hence it is in particular finite.
\label{Lem:LogGaussFinite}
\end{lemma}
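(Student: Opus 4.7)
The plan is to realize the preimage $N_{\mathbf{t}}$ as the zero locus in $(\C^*)^n$ of a square Laurent polynomial system whose constituent polynomials all have Newton polytope contained in $\New(f)$, and then invoke the Bernstein--Kushnirenko--Khovanskii (BKK) theorem. First I would rewrite the condition $\gamma(\mathbf{z}) = \mathbf{t}$: after choosing (for generic $\mathbf{t}$) an index $j$ with $t_j \neq 0$, say $j=1$, the projective equality $(z_1 \partial_1 f : \cdots : z_n \partial_n f) = (t_1 : \cdots : t_n)$ is equivalent to the $n-1$ scalar equations
\begin{eqnarray*}
	h_k(\mathbf{z}) & := & t_1 \cdot z_k \partial_k f(\mathbf{z}) - t_k \cdot z_1 \partial_1 f(\mathbf{z}) \ = \ 0, \quad k = 2,\ldots,n.
\end{eqnarray*}
Together with $f(\mathbf{z}) = 0$ this gives an $n \times n$ Laurent polynomial system on $(\C^*)^n$.

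The key monomial observation is that if $f = \sum_\alpha b_\alpha \mathbf{z}^\alpha$, then $z_j \partial_j f = \sum_\alpha \alpha_j b_\alpha \mathbf{z}^\alpha$, so $\supp(z_j \partial_j f) \subseteq \supp(f)$ and hence $\New(z_j \partial_j f) \subseteq \New(f)$. Thus each $h_k$ has Newton polytope contained in $\New(f)$, as does $f$ itself. Applying BKK with all $n$ Newton polytopes equal to $\New(f)$ gives the upper bound
\begin{eqnarray*}
	\# N_{\mathbf{t}} & \leq & n! \cdot MV(\New(f),\ldots,\New(f)) \ = \ n! \cdot \vol_{\mathrm{Eucl}}(\New(f)) \ = \ \vol(\New(f)),
\end{eqnarray*}
using the normalization in which the unit simplex has volume $1$, with solutions counted with multiplicity in $(\C^*)^n$.

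Second, I would establish that for generic $\mathbf{t}$ this bound is attained and all the common zeros are simple. The genericity half of BKK says that within the space of coefficient tuples for $n$ polynomials with the fixed Newton polytopes, the locus where the intersection drops dimension or fails to be transverse is Zariski-closed and proper. So the task is to show that the one-parameter family of systems $(f, h_2, \ldots, h_n)$ indexed by $\mathbf{t} \in \P^{n-1}_\C$ is not entirely contained in that bad locus. This follows from the non-degeneracy hypotheses: $\cV(f)$ is smooth of complex dimension $n-1$ and $\gamma$ is a morphism to a variety of the same dimension, so either $\gamma$ is dominant (hence generically finite, and the argument closes) or $\gamma$ is constant, which is ruled out because a single monomial of $f$ already forces different fibers of $\cV(f)$ to have different images under $\gamma$.

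The main obstacle will be the genericity/transversality step: BKK gives the \emph{maximum} possible number of solutions for a generic coefficient system with the prescribed Newton polytopes, but here the coefficients are not arbitrary, they come from a specific algebraic family parametrized by $\mathbf{t}$. I would handle this by noting that the coefficients of $h_k$ depend $\C$-linearly and nontrivially on the entries of $\mathbf{t}$, so the image of the parameter map $\mathbf{t} \mapsto (\text{coefficients of the system})$ is not contained in any proper Zariski-closed subset forced only by the common polytope structure — in particular not in the discriminantal subvariety governing non-transverse intersection — and hence a generic $\mathbf{t}$ yields a generic system in the BKK sense. Once finiteness and transversality hold at every point of $N_{\mathbf{t}}$, the cardinality equals the BKK bound $\vol(\New(f))$, giving the stated degree of $\gamma$.
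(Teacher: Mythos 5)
Your overall route is the same as the paper's: reduce the condition $\gamma(\mathbf{z})=\mathbf{t}$ together with $f(\mathbf{z})=0$ to a square Laurent system of $n$ equations in $(\C^*)^n$, observe that every polynomial in the system has Newton polytope contained in $\New(f)$, and invoke Kouchnirenko/BKK. (The paper even writes down essentially the same system, splitting off the indices with $t_j=0$, and then cites the Kouchnirenko theorem directly.) So the decomposition and the main tool agree.

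Where your write-up runs into trouble is precisely in the step you identify as the ``main obstacle,'' the genericity/transversality argument, and the two justifications you offer for it do not hold up. First, the claim that the coefficients of $h_k$ ``depend $\C$-linearly and nontrivially on $\mathbf{t}$, so the image of the parameter map ... is not contained in any proper Zariski-closed subset'' is not a valid inference: the image of that map is a linear subvariety of dimension at most $n-1$ sitting inside a coefficient space of much larger dimension, and a low-dimensional linear family can perfectly well lie entirely inside a discriminantal hypersurface. Nontriviality of a linear map does not prevent its image from being contained in a proper closed set. Second, the fallback dichotomy ``$\gamma$ is dominant or $\gamma$ is constant'' is false for morphisms between varieties of the same dimension: the image can be a positive-dimensional proper subvariety, in which case $\gamma$ is neither dominant nor constant but $N_{\mathbf{t}}$ is generically empty. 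What is actually needed is that $\gamma$ is dominant (equivalently, generically finite) and that the generic fiber is reduced of the expected cardinality; ruling out ``constant'' does not establish this. The paper sidesteps this by treating the pair $(f,\mathbf{t})$ as generic and citing Kouchnirenko outright, which is terse but does not introduce a wrong step. If you want to make the transversality explicit, the cleaner route is to vary $f$ within $\C^A$ as well (as the lemma statement permits via ``$(f,t)$ generic''): for generic coefficients of $f$ the family $(f,h_2,\ldots,h_n)$ does meet the BKK-generic stratum, and then one argues the common zeros are simple; the dominance/constancy dichotomy should be dropped.
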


For convenience, we recall the proof here. 

\begin{proof}
Without loss of generality let $t_1,\ldots,t_s = 0$ and $t_{s+1},\ldots,t_n \neq 0$. Then $\mathbf{v} \in N_{\mathbf{t}}$ if and only if $\mathbf{v}$ satisfies the following system of equations
\begin{eqnarray*}
	& & f(\mathbf{v}) \ = \ 0, \ z_1 \frac{\partial f}{\partial z_1}(\mathbf{v}) \ = \ 0, \ \ldots, \ z_s \frac{\partial f}{\partial z_s}(\mathbf{v}) \ = \ 0, \\
	& & \frac{z_{s+1}}{t_{s+1}} \frac{\partial f}{\partial z_{s+1}}(\mathbf{v}) \ = \
 \cdots \ = \frac{z_n}{t_{n}}\ \frac{\partial f}{\partial z_n}(\mathbf{v}).
 \end{eqnarray*}
 This is a system of $n$ equations in $n$ variables. Since $\cV(f)$ and $\cA(f)$ are invariant under a translation of the support set $A$ of $f$, see \eqref{Equ:Translation}, we can assume that no term cancellation occurs under differentiation. Hence, all polynomials in the system have the same Newton polytope and the statement follows by Kouchnirenko's Theorem \cite[p. 201]{Gelfand:Kapranov:Zelevinsky}, which states that generically the number of solutions is $\vol(\New(f))$.
\end{proof}

We also recall the structure of the contour of an amoeba itself, see \cite{Passare:Tsikh:Survey}.

\begin{lemma}
Let $f \in \C[\mathbf{z}]$ with $\cV(f) \subset (\C^*)^n$. Then the contour $\cC(f)$ is a closed, real analytic hypersurface in $\cA(f) \subset \R^n$.
\label{Lem:Contour}
\end{lemma}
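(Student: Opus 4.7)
The plan is to combine the characterization $\cC(f)=\Log|S(f)|$ from Mikhalkin's Theorem~\ref{Thm:MikhalkinContour} with Lemma~\ref{Lem:CritPtsVar}, which exhibits $S(f)$ as a real algebraic $(n-1)$-variety in $\R^{2n}$. Away from its real-algebraic singular locus $S(f)$ is then a real-analytic submanifold of $\cV(f)$ of dimension $n-1$, and because $\Log|\cdot|$ is real-analytic, the image $\Log|S(f)|$ is automatically a real-analytic set of dimension at most $n-1$.

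To upgrade this to a ``real-analytic hypersurface'', I would prove that $\Log|_{S(f)}$ is an immersion at a generic smooth point $\mathbf{v}\in S(f)$. At such a $\mathbf{v}$ the differential $d(\Log|_{\cV(f)})_{\mathbf{v}}$ has corank exactly one, so its real kernel $K_{\mathbf{v}}\subset T_{\mathbf{v}}\cV(f)$ is a line, and the dimension count $\dim K_{\mathbf{v}}+\dim T_{\mathbf{v}}S(f)=1+(n-1)<2n-2=\dim T_{\mathbf{v}}\cV(f)$ (for $n\ge 2$) leaves room for $K_{\mathbf{v}}$ to be transverse to $T_{\mathbf{v}}S(f)$. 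Once this transversality is established, $d(\Log|_{S(f)})_{\mathbf{v}}$ is injective, and a neighborhood of $\Log|\mathbf{v}|$ in $\cC(f)$ is a real-analytic submanifold of $\R^n$ of codimension one. The singular locus of $S(f)$, together with the locus where $d(\Log|_{\cV(f)})$ has corank at least two, forms a real-analytic subvariety of strictly smaller dimension, so it cannot spoil the hypersurface structure globally.

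Closedness I would deduce from properness of the log absolute map. A compact $K\subset\R^n$ pulls back to the set of points with every $|z_j|$ bounded above and bounded away from zero, which is a compact polyannulus in $(\C^*)^n$; hence $\Log|\cdot|:(\C^*)^n\to\R^n$ is proper, and so is its restriction to the closed real-algebraic set $S(f)$. Properness then forces $\cC(f)=\Log|S(f)|$ to be closed in $\R^n$, and in particular closed in $\cA(f)$.

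The hardest step is the transversality claim that $K_{\mathbf{v}}$ generically does not lie inside $T_{\mathbf{v}}S(f)$. I expect to handle it by identifying $K_{\mathbf{v}}$ explicitly through the tangent-space description of Section~\ref{SubSec:GaussMap}, so that $K_{\mathbf{v}}$ is read off from the (real part of the) logarithmic normal direction $\gamma(\mathbf{v})\in\P^{n-1}_{\R}$, while $T_{\mathbf{v}}S(f)$ is realized as the $d\gamma_{\mathbf{v}}$-preimage of $T_{\gamma(\mathbf{v})}\P^{n-1}_{\R}$. Using that $\gamma$ is algebraic of degree $\vol(\New(f))$ by Lemma~\ref{Lem:LogGaussFinite}, and is therefore generically finite with nondegenerate differential, one can then confine the failure of transversality to a strict algebraic subvariety of $S(f)$; cleanly carrying out this bookkeeping, together with a stratification handling the higher-corank strata of $\Log|_{\cV(f)}$, will be the technical heart of the argument.
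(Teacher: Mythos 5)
The paper does not actually prove this lemma: it is stated with the remark ``we recall the structure of the contour itself, see, e.g., \cite{Passare:Tsikh:Survey}'' and then cited. So you are supplying an argument where the paper gives none, which is a reasonable thing to attempt; but the one step that does the real work contains a concrete error.

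Your closedness argument via properness of $\Log|\cdot|:(\C^*)^n\to\R^n$ restricted to the closed set $S(f)$ is correct, and combining Theorem~\ref{Thm:MikhalkinContour} with Lemma~\ref{Lem:CritPtsVar} to realize $S(f)$ as a real algebraic $(n-1)$-variety is also the right setup. The problem is the immersion step. At a critical point $\mathbf{v}\in S(f)$ at which $d(\Log|_{\cV(f)})_{\mathbf{v}}$ has corank one, the rank is $n-1$, so the real kernel $K_{\mathbf{v}}\subset T_{\mathbf{v}}\cV(f)$ has dimension $(2n-2)-(n-1)=n-1$, not~$1$. (It is a line only when $n=2$, and even then your inequality $1+(n-1)<2n-2$ reads $2<2$, which is false.) With the correct count one gets $\dim K_{\mathbf{v}}+\dim T_{\mathbf{v}}S(f)=2(n-1)=2n-2=\dim T_{\mathbf{v}}\cV(f)$: two middle-dimensional subspaces of the tangent space. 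There is no slack at all; the transversality $K_{\mathbf{v}}\cap T_{\mathbf{v}}S(f)=\{0\}$ is exactly the borderline case and must be proved directly, not read off from a dimension count ``leaving room.'' You acknowledge this is the hard step, but the proposed route through $d\gamma_{\mathbf{v}}$ and the generic finiteness of $\gamma$ (Lemma~\ref{Lem:LogGaussFinite}) is stated as a program rather than carried out, and one would also need to be careful not to invoke Proposition~\ref{Prop:SameTangentSpace}, since that proposition is proved in the paper \emph{using} the present lemma and would make the argument circular. As written, the proposal does not establish the hypersurface property, which is precisely the nontrivial content of the lemma.
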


Note that this particular lemma justifies talking about tangent spaces, regular and 
singular points with respect to the contour of an amoeba. As a last step, before we can finish the 
proof of Theorem \ref{Thm:Main1}, we need to show how the critical locus $\cS(f)$, the contour $\cC(f)$, 
the Gau\ss \ map and the logarithmic Gau\ss \ map interact. The following proposition is stated partially or
implicitly in several papers, for example \cite{Mikhalkin:Annals,Mikhalkin:Survey} and \cite[p. 277]{Passare:Tsikh:Survey}. But to 
the best of our knowledge it has not been stated in this form before.

\begin{prop}
Let $f \in \C[\mathbf{z}]$ with $\cV(f) \subset (\C^*)^n$. For the regular locus of $S(f)$ 
and the regular locus of the contour $\cC(f)$ the following diagram commutes.
\begin{eqnarray}
\begin{xy}
	\xymatrix{
	S(f) \ar[rr]^{\Log|\cdot|} \ar[rd]_{\gamma} & & \cC(f) \ar[ld]^{G}, \\
	& \P_{\R}^{n-1} &
	}
\end{xy} \label{Equ:LogGaussDiagramm}
\end{eqnarray}
where $G$ denotes the Gau\ss \ map and $\gamma$ denotes the logarithmic Gau\ss \ map. In 
particular, for a regular point $\mathbf{w} \in \cC(f)$, all points in $\cV(f) 
\cap \F_{\mathbf{w}} \cap S(f)$ have the same image under the logarithmic Gau\ss \ map 
and therefore $\cV(f) \cap \F_{\mathbf{w}} \cap S(f)$ is generically finite.
\label{Prop:SameTangentSpace}
\end{prop}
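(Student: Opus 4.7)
The approach is to pass to local logarithmic coordinates around a regular point $\mathbf{v}\in S(f)$ with $\mathbf{w}=\Log|\mathbf{v}|$ a regular point of $\cC(f)$. In such a chart the logarithmic Gau\ss{} image appears directly as a complex normal covector to $\cV(f)$, and the diagram reduces to a linear-algebraic projection.

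Choose a local branch of $\Log_{\C}$ and write $z_j=e^{\zeta_j}$ with $\zeta_j=u_j+\textnormal{i}\phi_j$. By the chain rule, $df=\sum_{j=1}^n z_j(\partial f/\partial z_j)\,d\zeta_j$ at $\mathbf{v}$, so in $\zeta$-coordinates the complex tangent space $T_{\mathbf{v}}\cV(f)$ is cut out by the single linear form $\sum_j t_j\,d\zeta_j=0$, where $\mathbf{t}=(t_1,\ldots,t_n)\in\R^n$ represents $\gamma(\mathbf{v})$ and is real because $\mathbf{v}\in S(f)$. Splitting this equation into its real and imaginary parts,
\[
  T_{\mathbf{v}}\cV(f)\;=\;\bigl\{(u,\phi)\in\R^{2n}\ :\ \textstyle\sum_j t_ju_j=0\text{ and }\sum_jt_j\phi_j=0\bigr\},
\]
and its image under the projection onto the $u$-coordinates---which in this chart is precisely $d\Log|\cdot|$---is the hyperplane $H_{\mathbf{t}}=\{u\in\R^n:\sum_j t_ju_j=0\}$ of real dimension $n-1$.

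To identify this hyperplane with $T_{\mathbf{w}}\cC(f)$, I invoke Lemma \ref{Lem:CritPtsVar} to see that $T_{\mathbf{v}}S(f)$ is $(n-1)$-real-dimensional, and Lemma \ref{Lem:Contour} with regularity of $\mathbf{w}$ to see that $T_{\mathbf{w}}\cC(f)$ is $(n-1)$-real-dimensional. The inclusion $S(f)\subseteq\cV(f)$ yields $d\Log|\cdot|(T_{\mathbf{v}}S(f))\subseteq H_{\mathbf{t}}\cap T_{\mathbf{w}}\cC(f)$, while the regularity hypotheses force $\Log|\cdot|\colon S(f)\to\cC(f)$ to be a local submersion at $\mathbf{v}$, hence a local diffeomorphism for dimension reasons. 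All three $(n-1)$-subspaces then coincide, giving $T_{\mathbf{w}}\cC(f)=H_{\mathbf{t}}$ and therefore $G(\mathbf{w})=[t_1:\cdots:t_n]=\gamma(\mathbf{v})$, which is the commutativity of \eqref{Equ:LogGaussDiagramm}. For the ``in particular'' clause, any $\mathbf{v}'\in\cV(f)\cap\F_{\mathbf{w}}\cap S(f)$ satisfies $\Log|\mathbf{v}'|=\mathbf{w}$, so by the commutativity just proved, $\gamma(\mathbf{v}')=G(\mathbf{w})=\gamma(\mathbf{v})$; the entire set therefore lies in the single fiber $\gamma^{-1}(G(\mathbf{w}))$, which is finite for generic $\mathbf{w}$ by Lemma \ref{Lem:LogGaussFinite}.

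\emph{Main obstacle.} The most delicate point is the submersion claim: a priori a local piece of $S(f)$ near $\mathbf{v}$ could collapse to a lower-dimensional subset of $\cC(f)$, with other preimages of $\mathbf{w}$ accounting for its full $(n-1)$-dimensional smoothness. The joint regularity of $\mathbf{v}\in S(f)$ and $\mathbf{w}\in\cC(f)$, together with the matching dimensions, is precisely what rules this out and makes the diagram commute on the nose rather than only up to a quotient.
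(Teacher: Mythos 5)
Your argument follows essentially the same strategy as the paper's proof: express $\gamma(\mathbf{v})$ as the (real) normal covector to the hyperplane $\Log|T_{\mathbf{v}}\cV(f)|$, identify that hyperplane with $T_{\mathbf{w}}\cC(f)$ by dimension-matching via Lemmata \ref{Lem:CritPtsVar} and \ref{Lem:Contour}, and deduce the ``in particular'' clause and finiteness from Lemma \ref{Lem:LogGaussFinite}. Your pass to explicit logarithmic coordinates $z_j=e^{\zeta_j}$ makes the identification of $\gamma(\mathbf{v})$ with the normal of the $d\Log|\cdot|$-image hyperplane more transparent than the paper's abstract argument with a basis $b_1,\ldots,b_{n-1}$ of $T_{\mathbf{v}}\cV(f)$, and the delicate submersion step you explicitly flag is handled at essentially the same level of detail in the paper's own proof.
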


\begin{proof}
Let $\mathbf{v} \in S(f) \subset \cV(f)$ smooth and $\mathbf{w} = \Log|\mathbf{v}|$ with $\mathbf{w}$ smooth in $\cC(f)$. Since $\cC(f)$
is a smooth real analytic hypersurface and $\mathbf{w}$ is regular in $\cC(f)$ the tangent space $T_{\mathbf{w}}\cC(f)$ is real $(n-1)$-dimensional. 

Let $(b_1,\ldots,b_{n-1}) \subset (\C^*)^n$ be a basis of the tangent space 
$T_{\mathbf{v}}\cV(f)$ and $\mathbf{t}$ the corresponding normal vector. On the one hand, we show in the proof of Lemma \ref{Lem:CriticalPointsLogArg} (see Appendix \ref{Sec:Appendix}) that $\Log|T_{\mathbf{v}}\cV(f)|$ is spanned by $\Log|b_1|,\ldots,\Log|b_{n-1}| \in \R^n$ and hence is always a real $(n-1)$-dimensional hyperplane with normal vector $\Log|\mathbf{t}|$. On the other hand, $\mathbf{v}$ is in 
$S(f)$. Thus, $\gamma(\mathbf{v}) = (\log|t_1|:\cdots:\log|t_n|) \in \P_{\R}^{n-1} \subset \P_{\C}^{n-1}$.
Furthermore, by Lemma \ref{Lem:CritPtsVar}, $S(f)$ is a real $(n-1)$-manifold and for every point in a neighborhood of $\mathbf{v}$ in $S(f)$ the logarithmic Gau\ss \ map needs to remain real. Therefore, it follows that $T_{\mathbf{v}}S(f)$ is also spanned by $c' |b_1|,\ldots,c' |b_{n-1}|$ for some $c' \in \C^*$. Hence, 
\begin{eqnarray*}
	T_{\mathbf{w}}\cC(f) & = & \Log|T_{\mathbf{v}}\cV(f)| \text{ up to a linear transformation}. 
\end{eqnarray*}
Thus, the diagram \eqref{Equ:LogGaussDiagramm} commutes since the normal vector of $T_{\mathbf{w}}(\cC(f))$ is $\Log|\mathbf{t}|$ and so $G(\mathbf{w}) = \gamma(\mathbf{v})$. This also shows that all points in $S(f) \cap \F_{\mathbf{w}}$ have the same image under the logarithmic Gau\ss \ map. The finiteness of the set $\cV(f) \cap \F_{\mathbf{w}} \cap S(f)$ follows from Mikhalkin's Lemma \ref{Lem:LogGaussFinite}.
\end{proof}

The following result is an immediate consequence of Lemma \ref{Lem:LogGaussFinite} and Proposition \ref{Prop:SameTangentSpace}.

\begin{cor}
Let $f \in \C[\mathbf{z}]$ with $\cV(f) \subset (\C^*)^n$ and 
$\{\mathbf{w}_1,\ldots,\mathbf{w}_k\} \subset \cC(f) \subset \R^n$ the set of points in 
the contour of $f$, which are mapped to a point $\lambda \in \P_\R^n$ under the Gau\ss \ map such that $\gamma^{-1}(\lam)$ is finite. Then we have
\begin{eqnarray*}
	\sum_{j = 1}^k \# (S(f) \cap \F_{\mathbf{w}_j}) & = & \vol(\New(f)).
\end{eqnarray*}
\label{Cor:LogGaussFormula}
\end{cor}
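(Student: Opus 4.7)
The plan is to show that the disjoint union $\bigsqcup_{j=1}^k (S(f) \cap \F_{\mathbf{w}_j})$ is precisely the preimage $\gamma^{-1}(\lambda) \cap \cV(f)$ (where $\lambda$ is now viewed as a point of $\P_\C^{n-1}$ via the natural inclusion $\P_\R^{n-1} \subset \P_\C^{n-1}$), and then to apply Lemma \ref{Lem:LogGaussFinite} to count the latter.

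First I would set up genericity: the hypothesis that $\lambda \in \P_\R^{n-1}$ is generic (and that $\{\mathbf{w}_1,\ldots,\mathbf{w}_k\} = G^{-1}(\lambda) \cap \cC(f)$ is finite and avoids the singular locus) is a nonempty open condition on $\P_\R^{n-1}$, and the hypothesis of Lemma \ref{Lem:LogGaussFinite} that $\#(\gamma^{-1}(\lambda) \cap \cV(f)) = \vol(\New(f))$ is a nonempty open condition on $\P_\C^{n-1}$. Since an open subset of $\P_\C^{n-1}$ restricted to $\P_\R^{n-1}$ remains open and dense, both conditions hold simultaneously for $\lambda$ generic, so we may assume both.

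Next I would establish the key set-theoretic identity. The inclusion $\bigsqcup_j (S(f) \cap \F_{\mathbf{w}_j}) \subseteq \gamma^{-1}(\lambda) \cap \cV(f)$ follows directly from Proposition \ref{Prop:SameTangentSpace}: if $\mathbf{v} \in S(f) \cap \F_{\mathbf{w}_j}$, then by commutativity of Diagram \eqref{Equ:LogGaussDiagramm} we have $\gamma(\mathbf{v}) = G(\Log|\mathbf{v}|) = G(\mathbf{w}_j) = \lambda$. Conversely, if $\mathbf{v} \in \cV(f)$ satisfies $\gamma(\mathbf{v}) = \lambda \in \P_\R^{n-1}$, then by the very definition of $S(f)$ we have $\mathbf{v} \in S(f)$, and by commutativity $G(\Log|\mathbf{v}|) = \gamma(\mathbf{v}) = \lambda$, so $\Log|\mathbf{v}| \in G^{-1}(\lambda) \cap \cC(f) = \{\mathbf{w}_1,\ldots,\mathbf{w}_k\}$, placing $\mathbf{v}$ in exactly one of the sets $S(f) \cap \F_{\mathbf{w}_j}$ (disjointness being immediate, since distinct $\mathbf{w}_j$ have disjoint fibers).

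Finally, applying Lemma \ref{Lem:LogGaussFinite} to the right-hand side yields
\[
\sum_{j=1}^k \#\bigl(S(f) \cap \F_{\mathbf{w}_j}\bigr) \;=\; \#\bigl(\gamma^{-1}(\lambda) \cap \cV(f)\bigr) \;=\; \vol(\New(f)).
\]
I do not anticipate a serious obstacle: the only subtle point is checking that the genericity conditions for the Gauss map on $\cC(f)$ and for the logarithmic Gauss map on $\cV(f)$ can be imposed simultaneously, which is handled by the density argument above. The heavy lifting has already been done in Proposition \ref{Prop:SameTangentSpace} and Lemma \ref{Lem:LogGaussFinite}; this corollary is essentially their direct combination.
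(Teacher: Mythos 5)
Your proposal is correct and follows the same route as the paper, which simply cites Lemma \ref{Lem:LogGaussFinite} and Proposition \ref{Prop:SameTangentSpace}; you have fleshed out exactly the set-theoretic bijection and genericity bookkeeping that the paper leaves implicit. The only slightly informal spot is the claim that a Zariski-open subset of $\P_\C^{n-1}$ restricted to $\P_\R^{n-1}$ stays dense --- one should note that the bad locus for Lemma \ref{Lem:LogGaussFinite} does not contain all of $\P_\R^{n-1}$, but this is a minor point and does not affect the correctness of the argument.
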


\begin{flushright}
 $\square$
\end{flushright}

Note that $\gamma^{-1}(\lam)$ is generically finite by Lemma \ref{Lem:LogGaussFinite}.\\

We are now ready to complete the proof of Theorem \ref{Thm:Main1}.

\begin{proof}\textit{(Equivalence statement of Theorem \ref{Thm:Main1})}

Let $f \in \C[\mathbf{z}]$ with $\cV(f) \subset (\C^*)^n$ and $\mathbf{w}
\in \cC(f)$ such that the contour $\cC(f)$ is smooth at $\mathbf{w}$. Assume
that for every $\mathbf{z} \in \cV(f) \cap \F_{\mathbf{w}}$ it holds that
$\mathbf{z} \in S(f)$. That is, the right hand side of \eqref{Equ:MainTheorem} in Theorem \ref{Thm:Main1} is satisfied. By
Proposition \ref{Prop:SameTangentSpace} the $\Log|\cdot|$-images of the complex normal
vectors of the tangent spaces of all $\mathbf{z} \in \cV(f) \cap \F_{\mathbf{w}}$
coincide. We modify the coefficients of $f$ in a $\eps$-neighborhood
$\cB^A_\eps(f)$ with respect to the parameter metric $d^A$ in such a way that the variety $\cV(f)$ is locally
translated in the negative real direction of the $\Log_\C$-image of the 
unique complex normal vector of all points in $\cV(f) \cap \F_{\mathbf{w}}$. 
This is possible since the smooth part of $\cV(f)$ is locally linear in the coefficients of $f$.
After this translation the critical points of $f$ in $\F_{\mathbf{w}}$
are not contained in the translated variety $\cV(\tilde f)$, where 
$\tilde f$ denotes the translated polynomial. Let $\mathbf{v} \in S(f)$ with $\Log|\mathbf{v}| = \mathbf{w}$. 
Since we have only modified the coefficients by an arbitrary small
$\eps > 0$ with respect to the coefficient metric $d_A$ and as
$f^{|\mathbf{v}|,\re}$ and $f^{|\mathbf{v}|,\im}$  are continuous under the change
of coefficients, no new intersection points of $\cV(f^{|\mathbf{v}|,\re})$ and $\cV(f^{|\mathbf{v}|,\im})$ can 
appear in the torus $(S^1)^n$ isomorphic to the fiber $\F_{\mathbf{w}}$. Thus, 
$\cV(\tilde f) \cap \F_{\mathbf{w}} = \emptyset$.
\end{proof}

In Theorem \ref{Thm:Main1} we required $\mathbf{w} \in \R^n$ to be a smooth point 
of the contour of $\cA(f)$. The necessity of this assumption is illustrated in the following example.

\begin{exa}
Let $f = z_1^3 + z_1^3 + z_1z_2 + 1$. Although $\cV(f)$ is smooth the contour 
$\cC(f)$ has a singular point at the origin given by three intersecting branches; see 
Figure \ref{Fig:AmoebaBoundaryApproximation} in Section \ref{Sec:Computation}. 
Considering a neighborhood of the origin we can conclude by a continuity argument
in the fiber over the origin that there exist three different normal directions of tangent 
spaces of $\cV(f^{|\mathbf{0}|, \re})$ and $\cV(f^{|\mathbf{0}|, \im})$. Hence, 
we cannot guarantee that we can find a particular direction in the corresponding parameter 
space such that all intersections in the fiber vanish. It could happen that 
every perturbation of coefficients which lets one intersection point of 
$\cV(f^{|\mathbf{0}|, \re})$ and $\cV(f^{|\mathbf{0}|, \im})$ vanish turns another one 
into a regular intersection. Indeed, in this particular case the origin \textit{is} a 
boundary point and a suitable shifting direction of the parameter space is given by 
increasing the $z_1z_2$ term. See Figure \ref{Fig:AmoebaBoundaryApproximation} again.
\endenvi
\end{exa}

As a next step we give a characterization of the extended boundary of $\cA(f)$ equivalent to Theorem \ref{Thm:Main1}, which is computationally important. The following theorem was already partially stated as Corollary \ref{Cor:Main1} in the introduction.

\begin{thm}
Let $n \geq 2$, $f \in \C[\mathbf{z}]$ with $\cV(f) \subset (\C^*)^n$. Let 
$\Log|\mathbf{v}| = \mathbf{w} \in \R^n$ a smooth point of the contour such that $\gamma^{-1}(G(\mathbf{w}))$ is finite. Let $I = \langle
f^{\re}, f^{\im}, x_1^2 + y_1^2 = |v_1|^2,\ldots, x_n^2 + y_n^2 = |v_n|^2 \rangle$, where
$f^{\re},f^{\im}$ are given by the realification $z_j = x_j + \textnormal{i} y_j$ of
all variables. Then $\cV(I)$ has dimension at most $n-2$ and
\begin{enumerate}
	\item $\mathbf{w} \notin \cA(f)$ if and only if $\cV_{\R}(I) = \emptyset$,
	\item $\mathbf{w} \in \cC(f)$ if and only if 
	\begin{enumerate}
		\item $\cV_{\R}(I)$ contains roots of multiplicity greater than one for $n = 2$,
		\item $\cV_{\R}(I)$ contains an isolated point for $n \geq 3$.
	\end{enumerate}
	\item $\mathbf{w} \in \partial^{e} \cA(f)$ if and only if 
	\begin{enumerate}
		\item every root in $\cV_{\R}(I)$ has multiplicity greater than one for $n = 2$,
		\item $\cV_{\R}(I)$ is finite for $n \geq 3$.
	\end{enumerate}
	\item $\mathbf{w} \in \cA(f) \setminus \cC(f)$ else.
\end{enumerate}
\label{Thm:BoundaryArbitraryDimension} 
\end{thm}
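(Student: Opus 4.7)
The plan is to realify the variety and convert each geometric condition on $\mathbf{w}$ into an intrinsic property of the real algebraic set $\cV_\R(I)$, exploiting the identification $\cV_\R(I) = \cV(f) \cap \F_{\mathbf{w}}$ obtained via $z_j = x_j + \mathrm{i} y_j$: the fiber equations $x_j^2 + y_j^2 = |v_j|^2$ cut out $\F_{\mathbf{w}}$ in $\R^{2n}$, and then $f^{\re} = f^{\im} = 0$ carves out $\cV(f)$, as discussed in Section \ref{SubSec:Amoebas}. The dimension bound follows by a codimension count: $I$ has $n+2$ generators in $2n$ variables, so $\cV(I)$ has codimension at least $n+2$ and hence dimension at most $n-2$. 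Item (1) is then immediate from the fiber function characterization \eqref{Equ:FiberFunction2}: $\mathbf{w} \in \cA(f)$ iff $\cV(f^{|\mathbf{v}|}) \neq \emptyset$ iff $\cV_\R(I) \neq \emptyset$.

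Next I would establish a local dichotomy relating membership in $S(f)$ to the geometry of $\cV_\R(I)$ at a point $\mathbf{v} \in \cV(f) \cap \F_{\mathbf{w}}$ with $\boldsymbol{\phi} = \Arg(\mathbf{v})$. If $\mathbf{v} \notin S(f)$, Lemma \ref{Lem:RegularPointsonFiber} ensures both $\cV(f^{|\mathbf{v}|,\re})$ and $\cV(f^{|\mathbf{v}|,\im})$ are smooth at $\boldsymbol{\phi}$ with distinct tangent spaces, so their intersection is transverse. For $n = 2$ this means $\boldsymbol{\phi}$ is a simple zero of the underlying system, and for $n \geq 3$ it yields a smooth $(n-2)$-dimensional component of $\cV_\R(I)$ through $\boldsymbol{\phi}$, so $\boldsymbol{\phi}$ is not isolated. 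Conversely, for $\mathbf{v} \in S(f)$ either the tangent spaces coincide or one of the two hypersurfaces is singular at $\boldsymbol{\phi}$; in the case $n = 2$ transversality fails and the local intersection multiplicity is at least two. For $n \geq 3$ I would invoke Proposition \ref{Prop:SameTangentSpace}, which confines every critical point in $\F_{\mathbf{w}}$ to the set $\gamma^{-1}(G(\mathbf{w}))$, finite by hypothesis: hence the critical points in the fiber form a finite set, and a positive-dimensional component of $\cV_\R(I)$ through $\mathbf{v}$ would, by Lemma \ref{Lem:RegularPointsonFiber}, consist of smooth non-critical points accumulating at $\mathbf{v}$, forcing $\mathbf{v}$ itself to be a smooth non-critical point and contradicting $\mathbf{v} \in S(f)$. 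Thus $\mathbf{v}$ is isolated.

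With this dichotomy, item (2) follows from Mikhalkin's Theorem \ref{Thm:MikhalkinContour} (equivalently Corollary \ref{Cor:MikhalkinBoundary}): $\mathbf{w} \in \cC(f)$ iff some $\mathbf{v} \in \cV(f) \cap \F_{\mathbf{w}}$ lies in $S(f)$, which translates respectively into the existence of a multiple root ($n=2$) or an isolated point ($n \geq 3$) of $\cV_\R(I)$. Item (3) follows from the equivalence form of Theorem \ref{Thm:Main1} for non-singular contour points: $\mathbf{w} \in \partial\cA(f)$ iff every $\mathbf{v} \in \cV(f) \cap \F_{\mathbf{w}}$ lies in $S(f)$, translating to every root having multiplicity greater than one ($n=2$) or $\cV_\R(I)$ being finite ($n \geq 3$). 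Item (4) is then a consequence by exhaustion: if (1)--(3) fail, then $\cV_\R(I)$ is nonempty and contains no isolated or multiple points, so $\mathbf{w}$ lies in $\cA(f)$ but not in $\cC(f)$.

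The main hurdle is the $n \geq 3$ direction of the dichotomy, namely ruling out critical points that lie on a positive-dimensional component of $\cV_\R(I)$. The hypothesis on $\gamma^{-1}(G(\mathbf{w}))$, together with Proposition \ref{Prop:SameTangentSpace}, is essential here because it prevents a positive-dimensional family of critical points from accumulating at $\mathbf{v}$; Lemma \ref{Lem:RegularPointsonFiber} then upgrades this to smoothness and non-criticality, yielding the required contradiction. For $n = 2$ no such subtlety arises, since the expected intersection dimension in the fiber is already zero and the passage from ``critical'' to ``multiplicity greater than one'' is essentially just Lemma \ref{Lem:RegularPointsonFiber} combined with standard plane transversality.
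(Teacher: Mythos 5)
Your proposal follows essentially the same route as the paper: realify and identify $\cV_\R(I)$ with $\cV(f)\cap\F_{\mathbf{w}}$, translate criticality into the transversality of the two fiber hypersurfaces via Lemmas \ref{Lem:CriticalPointsLogArg} and \ref{Lem:RegularPointsonFiber}, then read off (1) from \eqref{Equ:FiberFunction2}, (2) from Theorem \ref{Thm:MikhalkinContour}, and (3) from the equivalence form of Theorem \ref{Thm:Main1}, with the $n\geq 3$ case handled through Proposition \ref{Prop:SameTangentSpace} and the finiteness of $\gamma^{-1}(G(\mathbf{w}))$. The decomposition, the key lemmas, and the logical ordering are all identical to the paper's.

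One step in your dichotomy is misstated. You argue that a positive-dimensional component of $\cV_\R(I)$ through a critical point $\mathbf{v}$ would consist of smooth non-critical points accumulating at $\mathbf{v}$, ``forcing $\mathbf{v}$ itself to be a smooth non-critical point.'' That inference is invalid: $S(f)$ is a closed real algebraic set (Lemma \ref{Lem:CritPtsVar}), so a limit of non-critical points can perfectly well be critical. What you actually need to rule out is that $\boldsymbol{\phi}=\Arg(\mathbf{v})$ lies, as a tangential singularity, on the closure of a smooth $(n-2)$-dimensional branch of $\cV_\R(I)$. The paper handles this by asserting, for $n\geq 3$, that non-transversality at $\boldsymbol{\phi}$ forces either isolation or an entire neighborhood of critical points in $\cV_\R(I)$, and the latter is excluded by the finiteness of $\gamma^{-1}(G(\mathbf{w}))$; your phrasing takes a shortcut that, as written, asserts something false. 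The intended conclusion ($\boldsymbol{\phi}$ isolated) is the same, so this is a fixable wording slip rather than a structural gap, but the argument should be rewritten so it does not rely on ``limit of non-critical points is non-critical.''
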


Note that the assumptions on $\mathbf{w}$ are very mild. Both the singular points and the points with $\gamma^{-1}(G(\mathbf{w}))$ non-finite form a subset of the contour with codimension at least one by Lemma \ref{Lem:LogGaussFinite}, Lemma \ref{Lem:Contour} and Proposition \ref{Prop:SameTangentSpace}. Therefore, we can think of $\mathbf{w}$ as a \struc{\textit{generic}} point of the contour.

\begin{proof}
Note that $\cV(I)$ has dimension $n-2$.
Claim (1) follows since $\cV(f) \cap \F_{\mathbf{w}} \cong \cV(f^{|\mathbf{v}|,\re}) \cap 
\cV(f^{|\mathbf{v}|, \im}) = \cV(f^{\re}_{|\F_{\mathbf{w}}}) \cap 
\cV(f^{\im}_{|\F_{\mathbf{w}}})$ and $\F_{\mathbf{w}}$ is isomorphic to the real locus of 
the variety of the polynomials $x_1^2 + y_1^2 - |v_1|^2, \ldots, x_n^2 + y_n^2 - |v_n|^2$. 
We conclude $\cV_{\R}(I) \cong \cV(f) \cap \F_{\mathbf{w}}$ by construction. 
By Lemma \ref{Lem:DimFiberManifolds} $\cV(f^{|\mathbf{v}|, \re})$ and 
$\cV(f^{|\mathbf{v}|,\im})$ are real hypersurfaces in a real $n$-dimensional 
variety. The point $\mathbf{w}$ is contained in the contour of $\cA(f)$ if and only if $\cV_{\R}(I) 
\cong \cV(f) \cap \F_{\mathbf{w}}$ contains a critical point $\mathbf{v}$ of the 
$\Log|\cdot|$ map. Let $\boldsymbol{\phi} = \Arg(\mathbf{v})$ (to simplify the notation, we do not distinguish between 
$\boldsymbol{\phi}$ in complex space and after realification; see Section 
\ref{Sec:Preliminaries}). By Lemma \ref{Lem:CriticalPointsLogArg} and Lemma 
\ref{Lem:RegularPointsonFiber} this is the case if and only if 
$T_{\boldsymbol{\phi}}(\cV(f^{|\mathbf{v}|,\re})) = 
T_{\boldsymbol{\phi}}(\cV(f^{|\mathbf{v}|, \im}))$, i.e., the manifolds 
$\cV(f^{|\mathbf{v}|, \re})$ and $\cV(f^{|\mathbf{v}|,\im})$ intersect non-transversally 
in $\boldsymbol{\phi}$. 
\begin{itemize}
 \item For $n = 2$ this is the case if and only if 
$(\mathbf{w},\boldsymbol{\phi})$ is a multiple root.
\item For $n \geq 3$ this is the case if 
and only if all points of $\cV_\R(I)$ in a neighborhood of $\boldsymbol{\phi}$ are 
critical.
\end{itemize}
Since we assumed $\gamma^{-1}(G(\mathbf{w}))$ to be finite, $\boldsymbol{\phi}$ 
has to be an isolated point in $\cV_\R(I)$, and claim (2) follows. Statement (3) is a 
direct consequence of Theorem \ref{Thm:Main1}, which yields that $\mathbf{w}$ is a 
point in the extended boundary if and only if all points in $\cV(f) \cap \F_{\mathbf{w}} \cong 
\cV_{\R}(I)$ are critical, i.e., if and only if they are all multiple roots for $n = 2$ and
isolated points for $n \geq 3$. Statement (4) follows from (1) -- (3).
\end{proof}

Observe that the ideal $I$ in the previous proof is of dimension zero if and only if $n = 2$, which is extremely relevant for the computation of amoebas, their (extended) boundaries and their contours. More precisely, for $n>2$ the corresponding variety $\cV(I)$ is not finite and one has to decide whether its real locus $\cV_\R(I)$ is finite. This is computationally difficult; see Section \ref{Sec:Computation} we see that for further details. 

Second, Theorem \ref{Thm:BoundaryArbitraryDimension} implies that if the $\Log|\cdot|$ map restricted to the variety of a curve in $\C[z_1,z_2]$ is 2 to 1, then the contour of $\cA(f)$ coincides with the extended boundary of the amoeba. Namely, in this case $\# \cV_\R(I) = 2$ and hence either all roots in a fiber are multiple roots or none of them are. This fact partially re-proves Proposition \ref{Prop:Harnack} by Mikhalkin and Rullg{\aa}rd. 

Third, Theorem \ref{Thm:BoundaryArbitraryDimension} leads to a new proof of Proposition \ref{Prop:LinearCase} by Forsberg, Passare and Tsikh as follows.

\begin{proof}\textit{(Proposition \ref{Prop:LinearCase})}
Let $f = 1 + \sum_{j = 1}^n b_j z_j$. After a parameter transformation we can assume that 
every $b_j$ is real. Since $\vol(\New(f)) = 1$ the logarithmic Gau\ss \ map is 1 to 1 by 
Lemma \ref{Lem:LogGaussFinite} and hence, by Theorem \ref{Thm:Main1} and Corollary 
\ref{Cor:LogGaussFormula}, if $\mathbf{w} \in \partial^{e} \cA(f)$, then $\#(\F_{\mathbf{w}} 
\cap \cV(f)) = 1$.
Thus, for $\Log|\mathbf{v}| = \mathbf{w}$ the fiber function $f^{|\mathbf{v}|}$ vanishes at a single point.

It is easy to see by the linearity of $f$ that the image of the fiber function 
$f^{|\mathbf{v}|}$ is a closed annulus around the constant term $1$, where the inner circle 
may have radius zero. Thus, the only uniquely attained values are the two intersection 
points of the outer boundary circle of the annulus with the real line. These extremal points are 
obviously attained if and only if the condition $|b_k v_k| = 1 + \sum_{j \in \{1,\ldots,n\} \setminus 
\{k\} } |b_j v_j|$ is satisfied for some $k$ at a point $\mathbf{v} \in (\C^*)^n$.
\end{proof}

In the remainder of this section we discuss further consequences of Theorem 
\ref{Thm:BoundaryArbitraryDimension}. First, we have a closer look at the topology 
of the variety $\cV(f)$ restricted to a certain fiber $\F_{\mathbf{\Log|\mathbf{v}|}}$, 
which is isomorphic to  $\cV(f^{|\mathbf{v}|})$. By Lemma \ref{Lem:DimFiberManifolds} we 
know that the realification of $\cV(f^{|\mathbf{v}|})$ is a real ($n-2$)-dimensional 
algebraic set. But, as we mentioned before, this set is not connected in general. Thus, 
for every $f \in \C[\mathbf{z}]$ there exists a canonical, non-trivial map 
\begin{eqnarray}
	\struc{B}: \R^n \to \Z, \quad \mathbf{w} \mapsto b_0(\cV(f) \cap \F_{\mathbf{w}}),\label{Equ:BettiMap}
\end{eqnarray}
where $\struc{b_0(\cV(f) \cap \F_{\mathbf{w}})}$ denotes the \struc{$0$-\textit{th Betti number}} of $\cV(f) \cap
\F_{\mathbf{w}}$. This means $B$ maps every point $\mathbf{w}$ of the amoeba ambient space to
the number of connected components of the variety $\cV(f)$ restricted to the fiber of
$\mathbf{w}$ with respect to the $\Log|\cdot|$ map. Thus, the
zero set of $B$ is exactly the complement of the amoeba of $f$. Note that for $n = 2$ we have
$b_0(\cV(f) \cap \F_{\mathbf{w}}) = \# (\F_{\mathbf{w}} \cap \cV(f))$ counted without multiplicity, since by Theorem
\ref{Thm:BoundaryArbitraryDimension} $\F_{\mathbf{w}} \cap \cV(f)$ is isomorphic to a finite arrangement of
points in $(S^1)^2$ in this case.

Obviously, $B$ induces a partition of $\R^n$, particularly of $\cA(f)$, which we
call \struc{$b_0$-\textit{decomposition}} or \struc{\textit{Betti decomposition}}. The following theorem shows that the Betti decomposition yields an alternative way to describe the contour of an amoeba.

\begin{thm}
Let $f \in \C[\mathbf{z}]$ and $\mathbf{w} \in \R^n$. If $\mathbf{w}$ is not an isolated singular point of $\cC(f)$, then $\mathbf{w} \in \cC(f)$ if and only if for all $\eps > 0$ the map $B$ is not constant in the $\eps$-neighborhood $\cB_{\eps}(\mathbf{w})$ of $\mathbf{w}$.
\label{Thm:BettiDecompositionContour}
\end{thm}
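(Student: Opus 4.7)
The plan is to establish the two implications of the equivalence by a direct topological analysis of how the fiber slice $\cV(f) \cap \F_{\mathbf{w}}$ deforms as $\mathbf{w}$ varies.

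\emph{Off the contour implies $B$ locally constant.} If $\mathbf{w} \notin \cA(f)$, the openness of the amoeba complement yields an $\eps$-ball on which $B \equiv 0$. If $\mathbf{w} \in \cA(f) \setminus \cC(f)$, then by Theorem \ref{Thm:MikhalkinContour} every $\mathbf{v} \in \cV(f) \cap \F_{\mathbf{w}}$ is a regular point of $\Log|\cdot|_{\cV(f)}$. Since $\F_{\mathbf{w}}$ is a compact $n$-torus, $\cV(f) \cap \F_{\mathbf{w}}$ is compact; applying the implicit function theorem at each such $\mathbf{v}$ and using this compactness, one finds a neighborhood $U \ni \mathbf{w}$ and an open set $V \supset \cV(f) \cap \F_{\mathbf{w}}$ in $\cV(f)$ on which $\Log|\cdot|: V \to U$ is a locally trivial smooth submersion, with $V \cap \F_{\mathbf{w}'} = \cV(f) \cap \F_{\mathbf{w}'}$ for all $\mathbf{w}' \in U$. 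Local triviality identifies the connected components of $\cV(f) \cap \F_{\mathbf{w}'}$ with those of $\cV(f) \cap \F_{\mathbf{w}}$, so $B$ is constant on $U$.

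\emph{On the contour implies $B$ not locally constant.} Fix $\eps > 0$. By Lemma \ref{Lem:Contour}, $\cC(f)$ is a real-analytic hypersurface, so its singular locus has empty interior in $\cC(f)$; together with the assumption that $\mathbf{w}$ is not an isolated singular point, this guarantees that $\cB_{\eps}(\mathbf{w})$ contains a smooth contour point $\mathbf{w}'$. I would take a short segment $\gamma \subset \cB_{\eps}(\mathbf{w})$ through $\mathbf{w}'$ transverse to $\cC(f)$ and show that $B \circ \gamma$ is non-constant. By Theorem \ref{Thm:BoundaryArbitraryDimension}, some $\boldsymbol{\phi} \in \cV(f) \cap \F_{\mathbf{w}'}$ is a multiple intersection of $\cV(f^{|\mathbf{v}|, \re})$ and $\cV(f^{|\mathbf{v}|, \im})$ (for $n = 2$) or an isolated real intersection point of these two $(n-1)$-manifolds (for $n \geq 3$). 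A local analytic deformation analysis at $\boldsymbol{\phi}$ then shows: for $n = 2$, the double intersection splits into two real simple roots on one side of $\cC(f)$ and a non-real conjugate pair on the other, changing $b_0$ by $\pm 2$; for $n \geq 3$, an isolated tangency is forced (by its isolation) to have Morse index $0$ or $n-1$, and so either leaves the real locus or opens into a small $(n-2)$-sphere component, changing $b_0$ by $\pm 1$. Either way, $B \circ \gamma$ is not constant, so $B$ is not invariant on $\cB_{\eps}(\mathbf{w})$.

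The main obstacle is ensuring that the local jump in $b_0$ contributed at $\boldsymbol{\phi}$ is not exactly cancelled by compensating jumps at other critical fiber points of $\Log|\cdot|_{\cV(f)}$ lying over $\mathbf{w}'$. I would handle this by choosing $\mathbf{w}'$ inside a dense open subset of the smooth locus of $\cC(f)$ on which $\gamma^{-1}(G(\mathbf{w}'))$ is finite (by Lemma \ref{Lem:LogGaussFinite} and Proposition \ref{Prop:SameTangentSpace}), and by choosing the direction of $\gamma$ generically so that only one critical preimage is degenerate at $t = 0$; this reduces the problem to tracking a single bifurcation and rules out accidental cancellation. The non-isolated singular hypothesis on $\mathbf{w}$ is precisely what permits this replacement of $\mathbf{w}$ by such a generic smooth $\mathbf{w}'$ within any prescribed neighborhood.
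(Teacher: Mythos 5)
Your proof follows essentially the same strategy as the paper's: both directions hinge on Proposition \ref{Prop:SameTangentSpace} and on the fact that a change in the topology of $\cV(f) \cap \F_{\mathbf{w}'}$ as $\mathbf{w}'$ varies is detected precisely at contour points via Theorem \ref{Thm:BoundaryArbitraryDimension}. Your treatment of the ``off the contour'' direction (implicit function theorem, compactness of the fiber, local triviality of the submersion) is a clean contrapositive version of the paper's argument and is, if anything, more careful than the paper's compressed formulation.

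For the converse direction you correctly put your finger on the real difficulty, namely that the $b_0$-jumps contributed by the several critical fiber preimages over the chosen regular contour point $\mathbf{w}'$ could in principle cancel. The paper's proof silently glosses over this: it asserts that ``the connected component \ldots which is given by the critical point, vanishes if we move from $\mathbf{w}$ in the corresponding normal direction'' as if there were a single critical preimage, or as if all such preimages necessarily fold in the same sense. Your instinct that this needs justification is right. However, the fix you propose does not work as stated: Proposition \ref{Prop:SameTangentSpace} says that \emph{all} points of $S(f) \cap \F_{\mathbf{w}'}$ over a regular contour point $\mathbf{w}'$ share the same projective image under the logarithmic Gau\ss\ map, which is precisely the normal direction of $\cC(f)$ at $\mathbf{w}'$. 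Consequently every critical preimage degenerates simultaneously as a transversal segment $\gamma$ crosses $\cC(f)$ at $\mathbf{w}'$, so no choice of direction for $\gamma$ can isolate a single bifurcation; and Corollary \ref{Cor:LogGaussFormula} does not guarantee that one can find a regular $\mathbf{w}'$ with exactly one critical preimage when $\vol(\New(f)) > 1$. What is actually needed is an argument that the fold senses of all critical preimages over a regular contour point agree (so the jumps add rather than cancel), or some other mechanism ruling out the balanced configuration; neither your proposal nor the paper's proof supplies this, so the gap you identified remains open in both.
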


Geometrically, the latter theorem means that the contour of a polynomial $f$ is the union of the boundary of (the closure of) the cells given by the Betti decomposition.

\begin{proof}
Assume that there exists no $\eps > 0$ such that $B$ is constant in the $\eps$-neighborhood $\cB_{\eps}(\mathbf{w})$ of $\mathbf{w}$. By
definition, this means that the number of connected
components of the real locus of $\cV(f^{\re}) \cap \cV(f^{\im}) \cap \F_{\mathbf{w}'}$ is not constant for all $\mathbf{w}' \in
\cB_{\eps}(\mathbf{w})$. This means that there exists a point
$\mathbf{w}' \in \cB_{\eps}(\mathbf{w})$ such that for $\Log|\mathbf{v}'| = \mathbf{w}'$ the varieties $\cV(f^{|\mathbf{v}'|, \re})$ and
$\cV(f^{|\mathbf{v}'|, \im})$ do not intersect transversally on some connected component on $\F_{\mathbf{w}'}$, which means that $\mathbf{w}' \in
\cC(f)$ by Theorem \ref{Thm:BoundaryArbitraryDimension}. Since this is true for every $\eps > 0$, it follows that $\mathbf{w}' = \mathbf{w}$. 
 
Assume conversely that there exists an $\eps$-neighborhood of $\mathbf{w}$ where $B$ is constant and $\mathbf{w}$ is a contour point, which is not an isolated singularity. First, assume $\mathbf{w}$ is a regular point of the contour of $\cA(f)$. It follows from Proposition \ref{Prop:SameTangentSpace} and its proof that a connected component of the real locus of $\cV(f^{\re}) \cap \cV(f^{\im}) \cap \F_{\mathbf{w}}$, which is given by some critical point of $f$ with respect to the $\Log|\cdot|$ map, vanishes if we move from $\mathbf{w}$ in the corresponding normal direction. This is a contradiction to the assumption that $B$ is constant. 

If $\mathbf{w}$ is a singular point of the contour but not isolated, then every $\eps$-neighborhood contains regular points of the contour. Hence, the same argument as above finishes the proof.
\end{proof}

\begin{cor}
Let $f \in \C[\mathbf{z}]$. Then the following statements are equivalent
\begin{enumerate}
 \item The smooth loci of $\cC(f)$ and $\partial^{e} \cA(f)$ coincide.
 \item $B$ is constant on $\cA(f) \setminus \partial^{e} \cA(f)$.
 \item The Betti decomposition induced by $B$ has exactly one connected component on $\cA(f) \setminus \partial^{e} \cA(f)$.
\end{enumerate}
\label{Cor:BettiPartition}
\end{cor}

\begin{flushright}
	$\square$
\end{flushright}

\medskip

We show now that we can bound the map $B$ from above for $n=2,3$.
The \struc{\textit{Harnack inequality}} \cite{Harnack}, \cite[Cor. 5.4]{Huisman} implies that a real
algebraic curve of genus $g$ can have at most $g+1$ real connected components.
By a result due to Castelnuovo, e.g., \cite{Miranda,Pecker}, the genus of a real algebraic
curve of degree $d$ in $\P^n$ is bounded from above by \struc{\textit{Castelnuovo's bound}
$C(d,n)$}. $C(d,n)$ is defined as follows: for $d\geq n\geq 2$ there exist unique $m,p\in
\N$ such that $d-1=m(n-1)+p$ with $0\leq p \leq n-1$. Then $C(d,n)=m(\frac{n-1}{m-1}+2
p)/2$. For $n=2$ this bound specializes to the well-known result $C(d,2)=\frac{(d-1)(d-2)}{2}$, which can
also be obtained by the adjunction formula for $g$.

\begin{thm}
Let $f \in \C[\mathbf{z}]$ with $\deg(f) = d$, $\deg(f^{\re}) = d^{\re}$ and
$\deg(f^{\im}) = d^{\im}$. Let $B$ denote the map yielding the associated $b_0$-partition
of $\R^n$ as in \eqref{Equ:BettiMap}. Let $\mathbf{w} \in \R^n$ such that $\cV(f)$ and $\F_{\mathbf{w}}$ do not have a common irreducible component.
\begin{enumerate}
	\item If $n = 2$ then $B(\mathbf{w}) \leq 4 d^{\re} d^{\im} \leq 4 d^2$ for all $\mathbf{w} \in \R^2$.
	\item If $n = 3$ then $B(\mathbf{w}) \leq C(4d^2,6)+1$ for all $\mathbf{w} \in
\R^3$.
\end{enumerate}
\label{Thm:BettiUpperBound}
\end{thm}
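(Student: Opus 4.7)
\textbf{Proof plan for Theorem \ref{Thm:BettiUpperBound}.}

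The plan is to reduce $B(\mathbf{w}) = b_0(\cV(f) \cap \F_{\mathbf{w}})$ to the number of connected components of the real algebraic set $\cV_\R(I)$, where $I = \lan f^{\re}, f^{\im}, x_1^2+y_1^2-|v_1|^2, \ldots, x_n^2+y_n^2-|v_n|^2 \ran$ is the ideal from Theorem \ref{Thm:BoundaryArbitraryDimension}, and then invoke B\'ezout's theorem (for $n = 2$) or the combination of Harnack's and Castelnuovo's inequalities (for $n = 3$). The no-common-component hypothesis guarantees that the intersection is proper of the expected real dimension $n - 2$ by Lemma \ref{Lem:DimFiberManifolds}, and in particular $\cV_\R(I)$ is finite for $n=2$ and a real algebraic curve for $n=3$.

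For Part (1) (the case $n = 2$), the set $\cV_\R(I) \subset \R^4$ is zero-dimensional and $B(\mathbf{w})$ simply counts its points (as noted after equation \eqref{Equ:BettiMap}). This real cardinality is dominated by the number of complex solutions, and B\'ezout in $\P^4$ applied to the four defining hypersurfaces of degrees $d^{\re}, d^{\im}, 2, 2$ yields $\#\cV_\C(I) \leq 4 d^{\re} d^{\im}$. Since $d^{\re}, d^{\im} \leq d$, the claim $B(\mathbf{w}) \leq 4 d^{\re} d^{\im} \leq 4 d^2$ follows.

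For Part (2) (the case $n = 3$), set $C := \cV_\R(I) \subset \R^6$ and pass to its projective complexification $C_\C \subset \P^6$. I would first bound $\deg(C_\C)$ via B\'ezout on the five defining hypersurfaces of degrees $d^{\re}, d^{\im}, 2, 2, 2$; to sharpen the naive count, I would change variables to $u_j = x_j + \textnormal{i} y_j$, $v_j = x_j - \textnormal{i} y_j$, under which $\cV_\C(f^{\re}) \cap \cV_\C(f^{\im}) = \cV_\C(f(\mathbf{u})) \cap \cV_\C(\bar f(\mathbf{v}))$ splits as a product and the three fiber quadrics become the bilinear forms $u_j v_j - |v_j|^2$. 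Careful bookkeeping on these equations gives the degree estimate $\deg C_\C \leq 4 d^2$. Castelnuovo's theorem then yields the geometric genus bound $g(C_\C) \leq C(\deg C_\C, 6) \leq C(4d^2, 6)$, and Harnack's inequality bounds the number of real connected components of the normalization by $g(C_\C) + 1$; the number of components of $C$ itself is no larger.

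The main obstacle will be applying Castelnuovo and Harnack rigorously when $C_\C$ may be reducible, singular, or degenerate in $\P^6$. I would handle this by normalizing each irreducible component of $C_\C$, bounding its geometric genus individually via Castelnuovo, and summing the resulting Harnack contributions, then checking that the total stays within $C(4d^2, 6)+1$ using the super-additivity of Castelnuovo's bound in the degree. A secondary technical point is pinning down the sharp $4 d^2$ degree estimate from the $(u,v)$-presentation rather than the naive B\'ezout value $8 d^{\re} d^{\im}$; if this sharpening fails for a particular choice of $f$, the resulting bound would have to be relaxed to $C(8d^2, 6)+1$, which still depends only on $d$.
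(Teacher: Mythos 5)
Your strategy coincides with the paper's: reduce $B(\mathbf{w})$ to the number of connected components of $\cV_\R(I)$, then apply B\'ezout for $n=2$ and B\'ezout together with Castelnuovo's and Harnack's inequalities for $n=3$. Part (1) is handled identically to the paper. For Part (2) the paper's own proof is considerably terser than yours: it merely observes that $\cV(I)$ is a curve and that one "can use Castelnuovo's bound", without deriving any degree estimate at all, and then cites Harnack's inequality from the preceding discussion.

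Two things you flagged are indeed real issues, and both are left untreated in the paper's proof. First, the degree $4d^2$ in the statement is not obtained by a straightforward B\'ezout count: the five generators of $I$ in $\R^6$ have degrees $d^{\re}, d^{\im}, 2, 2, 2$, giving $8\,d^{\re} d^{\im} \leq 8d^2$; your $(u,v)$-substitution again produces degrees $d, d, 2, 2, 2$, so without additional argument one still gets $8d^2$ (a multi-homogeneous count in $\P^3\times\P^3$ followed by a Segre embedding improves this to $6d^2$, but not to $4d^2$). Your fallback to $C(8d^2,6)+1$ is the conclusion that the written proof actually supports. Second, your worry about reducible or singular curves is well founded: Castelnuovo's bound and Harnack's inequality apply to an irreducible real curve via its normalization, and for a curve splitting into $k$ real irreducible components the Harnack contributions $\sum_i (g_i + 1)$ carry the additive term $k$, so the inequality $\sum_i (g_i + 1) \leq C(D,n) + 1$ requires a separate (degree-dependent) argument. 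Since the paper does not address either point, your proposal is as complete as the paper's own proof, and more explicit about where the remaining work lies.
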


\begin{proof}
Intersection results like B\'{e}zout's Theorem for projective varieties also hold in our affine situation, since we work with \textit{compact} affine subvarieties of the fiber torus $\F_{\mathbf{w}}$.

By the definition of $B$ and Theorem \ref{Thm:BoundaryArbitraryDimension}, $B(\mathbf{w})$ equals
the number of connected components of the real locus of the variety of the ideal $I = \langle f^{\re},f^{\im},
x_1^2 + y_1^2 - |v_1|^2, \ldots, x_n^2 + y_n^2 - |v_n|^2 \rangle$, where $\Log|\mathbf{v}| = \mathbf{w}$. Since $\cV(f)$ and $\F_{\mathbf{w}}$ do not have a common irreducible component by assumption, for $n=2$ the intersection $\cV(f) \cap
\F_{\mathbf{w}}$ is zero-dimensional. By B\'{e}zout's Theorem, $B(\mathbf{w})$ has degree $D\leq 4 d^{\re} d^{\im} \leq
4 d^2$. In the case $n=3$ the variety $\cV(f) \cap \F_{\mathbf{w}}$ is a curve, hence we can apply Castelnuovo's bound to the real locus of the variety $\cV(I)$.
\end{proof}

\section{Computation of the Boundary}
\label{Sec:Computation}

The computation of amoebas, its contour and its (topological) boundary was initialized by Theobald in \cite{Theobald:ComputingAmoebas} and tackled by different authors since then. In
this section, we first give an overview about the known methods for the computation of amoebas. Afterwards we use our own results, particularly Theorem \ref{Thm:BoundaryArbitraryDimension} to provide a new algorithmic approach. This approach allows us to
decide membership of points in amoebas \textit{exactly} and yields a distinction between
extended boundary points and contour points. While the corresponding computation in dimension at least three involves quantifier elimination methods, we only need Gr\"obner basis methods in dimension two and hence an
efficient computation of the extended boundary of the amoeba, and even the Betti
decomposition of amoebas is possible. Note that no other known
algorithm can can compute the boundary or decide membership (particularly not with symbolic methods). We also present results of a prototype implementation of our algorithm. \\

We start with a comparison of the existing approximation methods for amoebas. As mentioned
before, the first method was given by Theobald \cite{Theobald:ComputingAmoebas} in
dimension two and can be generalized to higher dimensions. The key idea is to use
Mikhalkin's Theorem \ref{Thm:MikhalkinContour} to compute the contour of
an amoeba and thus obtain an approximation. Practically, for $f \in \C[\mathbf{z}]$ this
can be done by computing the roots of all ideals
\begin{eqnarray}
	I_{\mathbf{s}} & = & \left\langle f, z_n \cdot \frac{\partial f}{\partial z_n} -
s_1 \cdot z_1 \cdot \frac{\partial f}{\partial z_1},\ldots,  z_n \cdot \frac{\partial
f}{\partial z_n} -
s_{n-1} \cdot z_{n-1} \cdot \frac{\partial f}{\partial z_{n-1}}\right\rangle
\label{Equ:ThorstensIdeal}
\end{eqnarray}
with $\mathbf{s}=(s_1, \ldots,s_{n-1}) \in \R^{n-1}$. Theorem \ref{Thm:MikhalkinContour}
guarantees that all points in $\cV(I)$ are critical and all critical points are of the
Form
\eqref{Equ:ThorstensIdeal}. Since finally $\cV(I)$ is zero-dimensional one can obtain an
approximation of the contour and hence of the amoeba by computing $\cV(I)$ for suitable
many different $\mathbf{s} \in \R^{n-1}$, see Lemma \ref{Lem:LogGaussFinite}.

Later approaches rely on solving the following \struc{\textit{membership problem}}.
\begin{prob}
Let $f \in \C[\mathbf{z}]$ and $\mathbf{w} \in \R^n$. Decide, whether $\mathbf{w} \in \cA(f)$.
\label{Prob:MembershipProblem}
\end{prob}

Although already described in \cite{Theobald:ComputingAmoebas}, this approach was first
used by Purbhoo \cite{Purbhoo} to provide an approximation method for amoebas based on a
\struc{\textit{lopsidedness certificate}}. This certificate checks whether, evaluated at one
specific point, the absolute value of one monomial of $f$ is larger than the sum of the
absolute values of all other monomials. It is easy to see that if this is the case, then
the point is contained in the complement of the amoeba. In general the contrary is not true: a point on the complement of the amoeba is not necessarily lopsided (except for linear polynomials, see Proposition \ref{Prop:LinearCase}). Purbhoo, however, showed that it is possible to investigate iterated resultants of the original polynomial in order to approximate the amoeba. The iteration process keeps
the amoeba invariant, but guarantees that every point becomes lopsided if the iteration level tends towards infinity \cite{Purbhoo}.

Theobald and the second author showed in \cite{Theobald:deWolff:SOS} that the membership problem can be transformed into a feasibility problem of a particular \struc{\textit{semidefinite optimization problem (SDP)}} via realification and using the Real Nullstellensatz. This allows an approximation of amoebas via SDP-methods, which turn out to be of at most the same complexity as Purbhoo's approach.

Avenda\~{n}o, Kogan, Nisse and Rojas \cite{Nisse:Rojas:et:al} provided limit bounds on the (Hausdorff-) distance between the amoeba of a polynomial $f = \sum_{j = 1}^d b_j \mathbf{z}^{\alp(j)}$ and
the tropical hypersurface of the tropical polynomial $\struc{\archtrop(f)} = \bigoplus_{j = 1}^d \log|b_j| \odot \mathbf{z}^{\alp(j)}$. Recall that the tropical polynomial $\archtrop(f)$ is defined
over the \struc{\textit{tropical semi-ring}} $\struc{(\R \cup \{-\infty\},\oplus,\odot)} = (\R \cup \{-\infty\},\max,+)$, i.e.,  $\archtrop(f) = \max_{j = 1}^d \log|b_j| + \langle \mathbf{z}, \alp(j) \rangle$,
where $\struc{\langle \cdot, \cdot \rangle}$ denotes the standard inner product. Recall furthermore that the \struc{\textit{tropical variety}} of a tropical polynomial is defined as the subset of $\R^n$
where the tropical polynomial has a non-smooth image, i.e., as subset of $\R^n$, where the
maximum is attained at least twice. Since the tropical hypersurface given by $\archtrop(f)$ is easy
to compute, this result yields a new rough but quick way to approximate the amoeba via
finding certificates for non-containment in the amoeba for
certain points. For an introduction to tropical geometry see \cite{Maclagan:Sturmfels}.\\

Now, we discuss our approach. The main improvement is that we
are able to describe the extended boundary of amoebas and that we are able to answer the membership
question \textit{exactly}. The former approaches only yield certificates for non-membership
of points in the amoeba, but cannot certify membership. Our approach is to use
Theorem \ref{Thm:BoundaryArbitraryDimension}, which guarantees for $n = 2$ that every
fiber $\F_{\mathbf{w}}$ only contains finitely many points of $\cV(f)$. Furthermore,
it allows us to distinguish between regular points of the amoeba, its contour points and its extended boundary
points. 

\begin{cor}
Let $f \in \C[\mathbf{z}]$ such that the real and imaginary parts of the coefficients
of $f$ are rational, and $\mathbf{w}\in \R^n$ such that $\exp(\mathbf{w}) \in \Q^n$.
Then we can decide with symbolical methods, whether $\mathbf{w} \in \cA(f)$, $\mathbf{w}
\in \partial^{e} \cA(f)$ or $\mathbf{w} \in \cC(f)$.
\label{Cor:FiberComputation}
\end{cor}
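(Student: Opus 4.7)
The plan is to invoke Theorem \ref{Thm:BoundaryArbitraryDimension} and reduce all three decision problems to effective real algebraic geometry on an explicit ideal with rational coefficients. First, under the hypothesis that the real and imaginary parts of the coefficients of $f$ are rational, the realified polynomials $f^{\re}, f^{\im}$ lie in $\Q[\mathbf{x},\mathbf{y}]$. Moreover, $\exp(\mathbf{w}) \in \Q^n$ means $|v_j|^2 = \exp(2 w_j) \in \Q$ for $\Log|\mathbf{v}| = \mathbf{w}$, so the entire ideal
\[
  I \ = \ \langle f^{\re}, f^{\im}, x_1^2 + y_1^2 - |v_1|^2, \ldots, x_n^2 + y_n^2 - |v_n|^2 \rangle
\]
is generated by polynomials in $\Q[\mathbf{x},\mathbf{y}]$ and is thus amenable to symbolic treatment via Gr\"obner bases.

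Next I would handle the membership question $\mathbf{w} \in \cA(f)$ by part (1) of Theorem \ref{Thm:BoundaryArbitraryDimension}: compute a Gr\"obner basis for $I$, and decide emptiness of $\cV_\R(I)$ by one of the standard symbolic procedures for real solvability over the rationals (for instance, a Positivstellensatz-based search for infeasibility certificates, or a critical point method producing at least one sample point in each semi-algebraically connected component). Both approaches are purely symbolic and terminate on input with rational coefficients.

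To then separate contour points, boundary points and interior points, I would apply parts (2)--(4) of Theorem \ref{Thm:BoundaryArbitraryDimension} according to dimension. For $n=2$ the ideal $I$ has $n+2 = 4$ generators in $2n = 4$ variables, so generically $I$ is zero-dimensional; a Gr\"obner basis (or the rational univariate representation) lets one enumerate all complex roots of $I$ together with their multiplicities, and a real root isolation step extracts those with vanishing imaginary part. One then reads off the three cases: all real roots simple gives a regular amoeba point, at least one but not all real roots of multiplicity $>1$ gives a contour point, and all real roots of multiplicity $>1$ gives a boundary point. For $n\geq 3$ one instead uses symbolic algorithms for computing the real dimension of $\cV_\R(I)$ and for detecting isolated real points (for example by combining a decomposition into equidimensional components with CAD on each component, or with Safey El Din--Schost-type critical-point procedures): the disjunction \emph{empty / has an isolated real point / is finite / otherwise} corresponds precisely to the four cases of Theorem \ref{Thm:BoundaryArbitraryDimension}.

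The hard part is not the reduction, which is immediate from Theorem \ref{Thm:BoundaryArbitraryDimension}, but providing a robust symbolic subroutine for the real-geometric questions in case $n \geq 3$, where $I$ is positive-dimensional and one must detect isolated real points and decide finiteness of $\cV_\R(I)$; this is the one point where the naive ``compute and enumerate roots'' strategy of the planar case no longer applies, and one must appeal to the more expensive but well-established effective real algebraic geometry tools sketched above.
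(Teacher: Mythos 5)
Your proposal follows essentially the same path as the paper's proof: observe that the hypotheses make the ideal $I$ from Theorem~\ref{Thm:BoundaryArbitraryDimension} rational, compute a Gr\"obner basis to handle the zero-dimensional case $n=2$, and appeal to effective real algebraic geometry (the paper cites a quantifier-elimination algorithm from Basu--Pollack--Roy; you list several interchangeable alternatives) to determine the dimension and isolated-point structure of $\cV_\R(I)$ when $n\geq 3$. The additional detail you supply about which symbolic subroutines to invoke, and the explicit case distinction by multiplicity in dimension two, are harmless elaborations of what the paper treats more tersely.
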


Note that this corollary means in particular that we can solve the membership problem
exactly.

\begin{proof}
Let $\mathbf{v}=(v_1,\ldots,v_n)\in \Q^n$ such that $\Log|\mathbf{v}| = \mathbf{w}
\in \R^n$. We also set $z_j = x_j + \textnormal{i}\cdot y_j$ for $j\in\{1,\ldots,n\}$.\\
Suppose $n=2$. We compute the zero-dimensional variety of $I = \langle
f^{\re}, f^{\im}, x_1^2 + y_1^2 - |v_1|^2, x_2^2 + y_2^2 - |v_2|^2 \rangle$ symbolically
via computing a Gr\"{o}bner basis. Using Theorem \ref{Thm:BoundaryArbitraryDimension} we
can read out from the real locus where $\mathbf{w}$ is located.\\
If $n>2$, then we also consider the ideal $I = \langle f^{\re}, f^{\im}, x_1^2 + y_1^2 -
|v_1|^2,
\ldots, x_n^2 + y_n^2 - |v_n|^2 \rangle$. The generic dimension of $I$ is $n-2>0$, but by
Theorem \ref{Thm:BoundaryArbitraryDimension} it suffices to determine the dimension of
the real locus of $\cV(I)$. This can be decided by quantifier elimination methods; see \cite[Algorithm 14.10.]{Basu:Pollack:Roy}.
\end{proof}

With Corollary \ref{Cor:FiberComputation} we can approximate the contour \textit{and} the extended boundary of an amoeba $\cA(f)$ of a given polynomial $f$ in dimension
two. First, we compute the contour $\cC(f)$, for example by using Theobald's method solving the ideal $I$ from \eqref{Equ:ThorstensIdeal}.
Alternatively, we can compute all contour points along a one dimensional affine subspace of $\R^2$ given by fixing the absolute value of $z_1$ or $z_2$. This means we compute the variety of the ideal
\begin{eqnarray*}
	\left\langle f^{\re}, f^{\im}, \left(z_1 \cdot \frac{\partial f}{\partial z_1}\right)^{\re} \cdot \left(z_2 \cdot \frac{\partial f}{\partial z_2}\right)^{\im} - \left(z_1
\cdot \frac{\partial f}{\partial z_1}\right)^{\im} \cdot \left(z_2 \cdot \frac{\partial f}{\partial z_2}\right)^{\re}, x_1^2 + y_1^2 - \lam_1^2 \right\rangle
\end{eqnarray*}
in $\R[x_1,x_2,y_1,y_2]$, where the third polynomial guarantees that we only investigate
critical points and the fourth polynomial ensures that the absolute value of $z_1$
equals $\lam_1 \in \R$. In the following example we present the result of a prototype \textsc{Sage} / \textsc{Singular}
\cite{sage,Singular} implementation of our algorithm.\\

For $n>2$ an implementation to approximate the boundary is also possible using
Theobald's method and Corollary \ref{Cor:FiberComputation} or by computing the Betti decomposition. However, the runtime of such an algorithm is very long due to the necessary quantifier elimination.

\begin{exa}
We approximate the amoebas of $f = z_1^2z_2 + z_1 z_2^2 + c z_1 z_2 + 1$ with $c = 1.5$ and $c = 1$ via computing their contour and their boundary. Here, we use Theobald's method , see \eqref{Equ:ThorstensIdeal}, to compute the contour $\cC(f)$. Afterwards, we distinguish between contour and boundary with the approach described in Corollary \ref{Cor:FiberComputation}. The result is shown in Figure \ref{Fig:AmoebaBoundaryApproximation}.
\endenvi
\end{exa}

\begin{figure}
\ifpictures
\includegraphics[width=0.4\linewidth]{./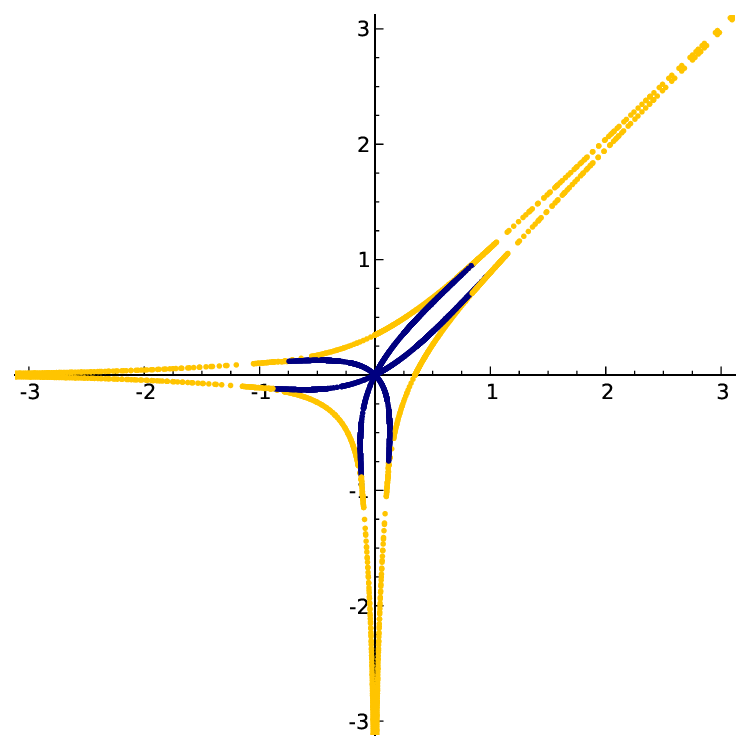} \quad
\includegraphics[width=0.4\linewidth]{./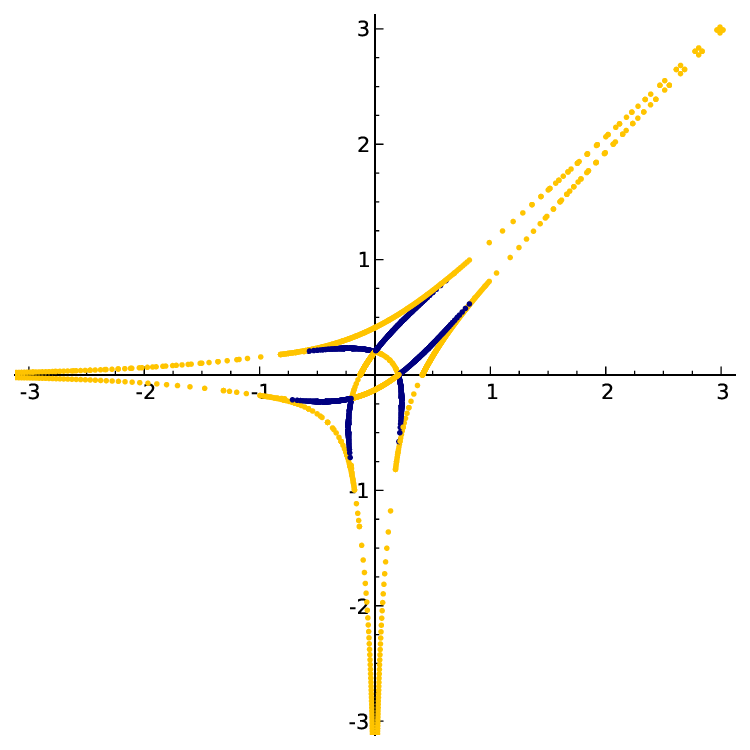}
\fi
\caption{A prototype implementation of our method: An approximation of the contour (dark) and the (extended) boundary (light) of the amoebas of the polynomials $f = z_1^3 + z_2^3 + c z_1 z_2 + 1$ with $c = 1$ and $c = 1.5$.}
\label{Fig:AmoebaBoundaryApproximation}
\end{figure}

Corollary \ref{Cor:FiberComputation} allows us furthermore to compute an arbitrary exact approximation of the Betti decomposition of the amoeba of an arbitrary polynomial in dimension two. Recall that in dimension two the map $B$, see \eqref{Equ:BettiMap}, maps $\mathbf{w} \in \R^n$ to the number of roots in $\cV(f) \cap \F_{\mathbf{w}}$, i.e., to the number of real zeros in a fiber ideal as given in Theorem \ref{Thm:BoundaryArbitraryDimension}. Hence, we can compute all values of $B$ via Corollary \ref{Cor:BettiPartition}, which yields, in addition to information provided by the image of $B$ itself, another method to approximate the amoeba, its contour, and its boundary. The following example is computed via a prototype \textsc{Sage} / \textsc{Singular} implementation of our method.

\begin{exa}
We compute the Betti decomposition of the polynomials $f = z_1^2z_2 + z_1 z_2^2 + c z_1 z_2 + 1$ with $c = -4$ and $c = 1.5$. In order to do so, we compute the variety of every fiber
ideal of points contained in the set $\{20 \cdot (w_1,w_2) \in \Z^2 \ : \ -2 \leq w_1,w_2 \leq 2 \}$. The plots are depicted in Figure \ref{Fig:BettiDecompositon}. Note that for $c = -4$
the polynomial $f$ defines a Harnack curve and hence $\Log|\cdot|$ is 2 to 1 by Proposition \ref{Prop:Harnack} or by Theorem \ref{Thm:BoundaryArbitraryDimension}. Thus, by Corollary
\ref{Cor:BettiPartition} we have $\partial^{e} \cA(f) = \cC(f)$ in this case. Therefore, the left picture in
Figure \ref{Fig:BettiDecompositon} also provides an example for this Corollary \ref{Cor:BettiPartition}.
\label{Exa:BettiDecomp}
\endenvi
\end{exa}

\begin{figure}
\ifpictures
\includegraphics[width=0.4\linewidth]{./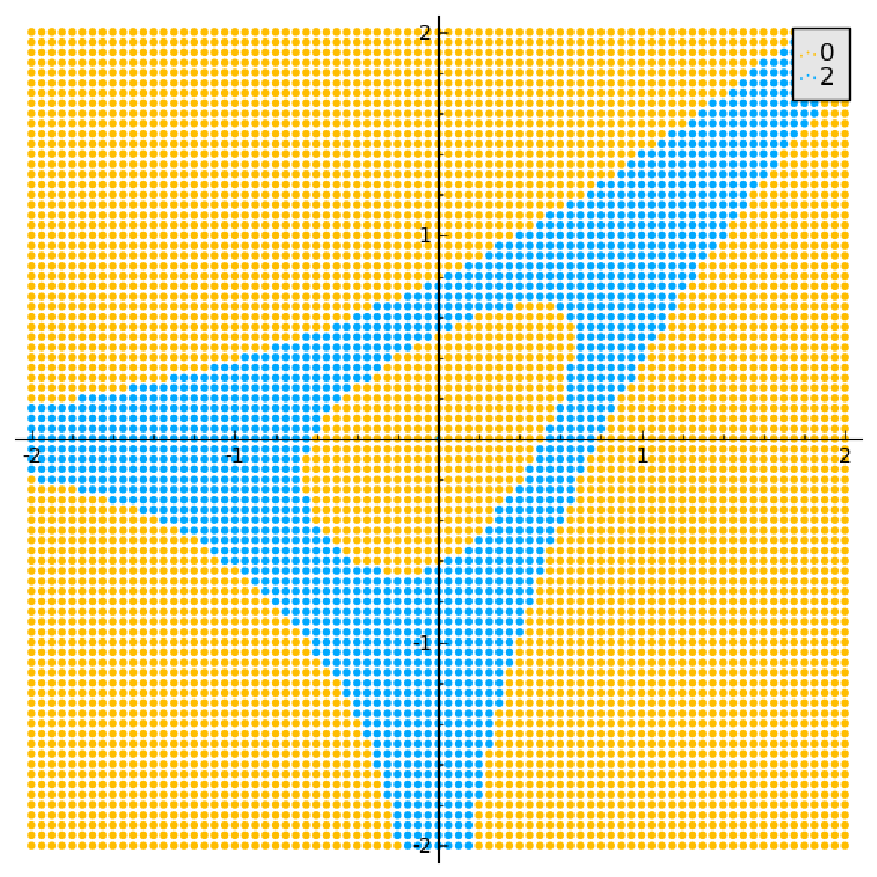} \quad
\includegraphics[width=0.4\linewidth]{./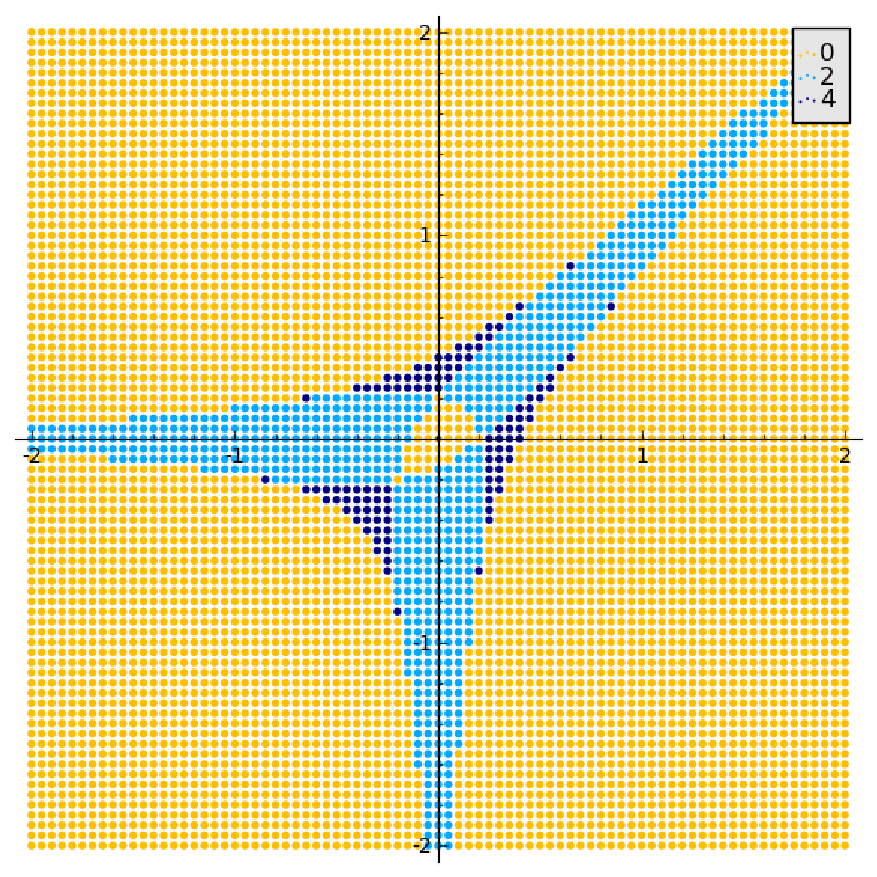}
\fi
\caption{A prototype implementation to approximate of the Betti decomposition of $f = z_1^3 + z_2^3 + c z_1 z_2 + 1$ for $c = -4$ and $c = 1.5$.}
\label{Fig:BettiDecompositon}
\end{figure}

\section{Impact on Amoeba Bases}
\label{Sec:AmoebaBases}

In contrast to amoebas of Laurent polynomials and their hypersurfaces, which have been extensively studied and are well understood in many aspects, amoebas of arbitrary ideals, more precisely, the $\Log|\cdot|$ image of varieties of codimension larger than one, have merely been investigated and are almost completely not understood. To the best of our knowledge, the most remarkable result is by
Purbhoo, see \cite[Corollary 5.2]{Purbhoo}, stating that for every polynomial ideal $I$ it holds
\begin{eqnarray*}
	\cA(I) & = & \bigcap_{f \in I} \cA(f).
\end{eqnarray*}

A canonical question arising from this result, which was already mentioned by Purbhoo himself, is if the intersection on the right hand side can be restricted to a \textit{finite} subset of the polynomials in the ideal, an \struc{\textit{amoeba basis}}. 

Before we define amoeba bases, we briefly recall the definition of Gr\"{o}bner bases and tropical bases in order to demonstrate the analogy in the
definition of amoeba bases in \ref{Def:AmoebaBasis} afterwards. In algebraic geometry and computer algebra Gr\"{o}bner bases play a fundamental
role, since they are the standard tool to solve (non-linear) systems of polynomial equations. For a finitely generated ideal $I = \langle
f_1,\ldots,f_r \rangle \subseteq \C[\mathbf{z}]$ with variety $\cV(I)$ a \struc{\textit{Gr\"{o}bner basis}} $\struc{G} \subset I$ (with respect to a given monomial
ordering $\prec$) is a finite system of polynomials $g_1,\ldots,g_s$ such that the ideal $\lt_\prec(I)$ of leading terms of polynomials in
$I$ is generated by the leading terms of the elements of the Gr\"{o}bner basis. This means
\begin{eqnarray*}
	\lt_\prec(I) & = & \langle \lt_\prec(g_1),\ldots,\lt_\prec(g_s)\rangle.
\end{eqnarray*}
Gr\"obner bases exist for every finitely generated polynomial ideal and monomial ordering and can be computed efficiently, for example via Buchberger's algorithm. For an introduction to the topic see \cite{Cox:Little:OShea}.

Similarly, in tropical geometry there exist \textit{tropical bases}. Let $I \subset \K[\mathbf{z}]$ be an ideal generated by finitely many
polynomials in a polynomial ring $\K[\mathbf{z}]$ over a real valuated field $\K$ like the field of Puiseux series. One defines for every $f =
\sum_{\alp \in A} b_\alp \mathbf{z}^{\alp} \in \K[\mathbf{z}]$ with $A \subset \Z^n$ the corresponding tropical polynomial $\trop(f)$ as  
\begin{eqnarray*}
	\struc{\trop(f)} & = & \bigoplus_{\alp \in A} -\val(b_\alp) \oplus \mathbf{z}^{\alp},
\end{eqnarray*}
where $\val$ denotes the natural valuation map from the algebraic closure $\ovl{\K}$ of $\K$ to $\R \cup \{\infty\}$. As before, the tropical variety $\cT(\trop(f))$ is defined as the subset of $\R^n$ where $\trop(f)$ attains its maximum at least twice and the tropical variety $\cT(I)$ of the ideal $I$ is given by $\cT(I) = \bigcap_{f \in I} \cT(\trop(f))$. With this notation one calls $G = (g_1,\ldots,g_r) \subset \K[\mathbf{z}]$ a \struc{\textit{tropical basis}} of $I$ if $\langle g_1,\ldots,g_r \rangle = I$ and
\begin{eqnarray*}
	\cT(I) & = & \bigcap_{j = 1}^r \cT(\trop(g_j)).
\end{eqnarray*}
For more details about tropical bases see \cite{Bogart:et:al,Hept:Theobald:2,Maclagan:Sturmfels,Speyer:Sturmfels}.\\

As an analog to Gr\"{o}bner bases from algebraic geometry and tropical bases from tropical geometry, we define an amoeba basis in the following way. 

\begin{defn}
Let $I \subset \C[\mathbf{z}]$ be a finitely generated ideal. Then we call $(g_1,\ldots,g_s) \subset I$ an \struc{\textit{amoeba basis}} if
\begin{eqnarray*}
	\begin{array}{crcl}
		(1) & \cA(I) & = & \bigcap_{j = 1}^s \cA(g_j), \\
		(2) & \cA(I) & \subset & \bigcap_{j \in \{1,\ldots,s\} \setminus \{i\}} \cA(g_j) \text{ for every } 1 \leq i \leq s, \\
		(3) & \langle g_1,\ldots,g_s \rangle & = & I.\\
	\end{array}
\end{eqnarray*}
\label{Def:AmoebaBasis}
\endenvi
\end{defn}

Although the question of the existence and (possible) form of amoeba bases was roughly stated by Purbhoo \cite{Purbhoo} (without using the term), to the best of our knowledge no formal definition of an amoeba basis was given elsewhere before. Therefore, we point out, while axiom (2) only requires minimality of an amoeba basis and hence is no proper restriction, it is not clear whether it makes sense to require axiom (3) in general. We do this here since we want to create an object, which is defined analogously to Gr\"{o}bner
bases and tropical bases.

It is unclear for which ideals amoeba bases exist. Purbhoo claims that they do not exist in general \cite[p. 25]{Purbhoo}, but he does not give a formal proof. Furthermore, if amoeba bases exist for certain ideals, then it is unclear how many elements they have, how they can be computed, or if they are unique in any sense. In this section we show that if an amoeba basis exists for some given ideal, then the amoeba of the ideal intersects the boundary of the amoeba of every basis element, see Theorem \ref{Thm:AmoebaBasesBoundary}. Thus, an understanding of the boundary of amoebas is \textit{crucial} in order to find amoeba bases. Furthermore, we demonstrate that amoeba bases are a useful concept in general. Namely, in Theorem \ref{Thm:AmoebaBasesLinearCase} we give an algorithm that computes a linear amoeba basis of length $n+1$ for every ideal given by a full ranked linear system of equations in $(\C^*)^n$.\\

As a first result of this section, we show that the boundary of an amoeba plays a key role for the comprehension of amoeba bases.

\begin{thm}
Let $I \subset \C[\mathbf{z}]$ be a finitely generated ideal. Assume there exists an amoeba basis $(g_1,\ldots,g_s)$ for $I$. Then
$\partial^{e} \cA(g_i) \cap \cA(I) \neq \emptyset$ for every $1 \leq i \leq s$.
\label{Thm:AmoebaBasesBoundary}
\end{thm}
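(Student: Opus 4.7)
The plan is to produce a topological boundary point of $\cA(g_i)$ lying in $\cA(I)$, then invoke the fact that every topological boundary point is automatically an extended boundary point in the sense of Section~\ref{SubSec:Amoebas}. First I would use axiom~$(2)$ of~\eqref{Equ:AmoebaBasisDef}, read as a strict inclusion (otherwise it is automatic from~$(1)$ and carries no information), to extract a witness $\mathbf{w}_i \in K \setminus \cA(g_i)$ where $K := \bigcap_{j \neq i}\cA(g_j)$. Since complement components are open, $\mathbf{w}_i$ lies in some $E_\alp(g_i)$ with $\alp \in \New(g_i)\cap \Z^n$; under the implicit assumption $\cA(I)\neq \emptyset$, I also fix a reference point $\mathbf{v} \in \cA(I) = K\cap \cA(g_i)$.

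Next I would examine the closure $\overline{K\setminus\cA(g_i)}$ in $\R^n$. Both $K$ and $\cA(g_i)$ are closed, so the standard containment $\overline{K\cap \cA(g_i)^c}\subset K\cap (\cA(g_i)^c \cup \partial_{\mathrm{top}}\cA(g_i))$ yields
\[
 \overline{K\setminus \cA(g_i)}\setminus (K\setminus \cA(g_i))\ \subset\ K\cap \partial_{\mathrm{top}}\cA(g_i)\ \subset\ \cA(I)\cap \partial_{\mathrm{top}}\cA(g_i).
\]
If this set is non-empty, any element $\mathbf{u}$ is the sought witness: $\mathbf{u}\in \cA(I)$, and as a topological boundary point of $\cA(g_i)$ it is automatically an extended boundary point, since an arbitrary small coefficient perturbation of $g_i$ in $\C^A$ that locally retracts $\cA(g_i)$ across $\mathbf{u}$ places $\mathbf{u}$ in an adjacent complement component $E_{\alp'}(\tilde g)$.

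The remaining case is that $K\setminus \cA(g_i)$ is closed in $\R^n$. Then $\cA(I)$ and $K\setminus \cA(g_i)$ are both clopen in $K$ and separate $\mathbf{v}$ from $\mathbf{w}_i$ into distinct connected components of $K$, with the component $D$ of $\mathbf{w}_i$ a closed subset of the open convex set $E_\alp(g_i)$. The hard part will be excluding this disconnection. The route I would take is to combine the connectedness (indeed path-connectedness) of each $\cA(g_j)$---which holds because $\cA(g_j)$ is the continuous $\Log|\cdot|$-image of the connected irreducible variety $\cV(g_j)$---with axiom~$(3)$: the relation $\langle g_1,\ldots,g_s\rangle = I$ supplies algebraic identities $h\cdot g_i = \sum_{j\neq i}h_j g_j$ in $I$ which, translated to the amoeba level, should force $D$ to meet $\partial_{\mathrm{top}} E_\alp(g_i)$ and thereby contradict the disconnection. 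Gaining rigorous control of this global topology of $K$ is the principal obstacle of the proof.
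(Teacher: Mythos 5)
Your approach mirrors the paper's: both try to contradict the minimality axiom~$(2)$ by showing that $\partial\cA(g_i)\cap\cA(I)=\emptyset$ would make $g_i$ redundant. The paper's own proof is far shorter. It assumes $\partial\cA(g_i)\cap\cA(I)=\emptyset$, notes $\cA(I)\subset\cA(g_i)\setminus\partial\cA(g_i)$, and then concludes in a single ``and thus'' that $\cA(I)=\bigcap_{j\neq i}\cA(g_j)$, which contradicts $(2)$. Your proposal essentially unpacks what that ``and thus'' requires: writing $K=\bigcap_{j\neq i}\cA(g_j)$, the set $\cA(I)=K\cap\cA(g_i)$ is closed, and it equals $K\cap\operatorname{int}\cA(g_i)$, so it is also open in $K$; hence $\cA(I)=K$ only once one rules out that $\cA(I)$ and $K\setminus\cA(g_i)$ could be two nonempty clopen pieces of a disconnected $K$. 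Your second case --- $K\setminus\cA(g_i)$ closed --- is exactly that disconnection scenario, and you correctly call it ``the principal obstacle.''

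So the genuine gap in your proposal (which you flag honestly) is the failure to exclude that disconnection; you sketch a route via axiom~$(3)$ and connectedness of each $\cA(g_j)$ but do not carry it through, and it is not obvious that algebraic identities $h\,g_i=\sum_{j\neq i}h_j g_j$ translate into the required topological control on $K$. What is worth noting for your own understanding is that the paper's proof contains no such argument either: it passes over this exact point with ``and thus,'' not invoking axiom~$(3)$ at all, and implicitly assuming the clopen-in-$K$ piece $\cA(I)$ exhausts $K$. Your more careful case split is therefore not wasted effort --- it makes visible a step that the paper treats as immediate --- but it is also more machinery than needed once you see the clopen formulation: you can dispense with the closure $\overline{K\setminus\cA(g_i)}$ and the explicit witness $\mathbf{w}_i$ and go straight to ``$\cA(I)$ is nonempty, clopen in $K$; if $K$ is connected we are done.'' What remains open, for you and for the paper, is justifying that connectedness (or otherwise excluding the disconnection).
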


\begin{proof}
Since $(g_1,\ldots,g_s)$ is an amoeba basis of $I$, we have $\cA(I) = \bigcap_{j = 1}^s \cA(g_j)$ and therefore $\cA(I) \subset \cA(g_i)$ for every $i$. Assume $\partial^{e} \cA(g_i) \cap \cA(I) = \emptyset$ for some $i$. Then $\cA(I) \subset (\cA(g_i) \setminus \partial \cA(g_i))$ and thus $\cA(I) = \bigcap_{j \in \{1,\ldots,s\} \setminus \{i\}} \cA(g_j)$. This is a contradiction
to the minimality assumption of amoeba bases.
\end{proof}

In Theorem \ref{Thm:AmoebaBasesBoundary} we assumed that amoeba bases exist. But, as already mentioned in the beginning of the section, the question about the existence of amoeba bases is open. Obviously, amoeba bases always exist for principal ideals and hence particularly for ideals of univariate polynomials. But this is trivial since  we have $\cA(I) = \cA(f)$ for every such ideal $I = \langle f \rangle$. We show in the following that in general there also exist amoeba bases for non-trivial cases by proving their existence and
computability for full ranked systems of linear equations. The following theorem is joint work with Chris Manon. The initial proof strategy was obtained by him and the second author.

\begin{thm}
Let $I = \langle f_1,\ldots,f_{n} \rangle \subset \C[\mathbf{z}]$, where the $f_j$ are linear polynomials defining a system of linear equations of full rank satisfying $\cV(I) = \mathbf{v}$. Then 
\begin{eqnarray*}
	g_0 & = & 1 + \frac{1}{||\mathbf{v}||_1} \sum_{k = 1}^n -e^{-\textnormal{i} \cdot \arg(v_k)} z_k, \\
	g_j & = & 1 + \sum_{k \in \{1,\ldots,n\} \setminus \{j\}}^n e^{-\textnormal{i} \cdot \arg(v_k)} z_k - \frac{1 + ||\mathbf{v}||_1 -
|v_j|}{v_j} \cdot z_j \ \text{ for } 1 \leq j \leq n, 
\end{eqnarray*}
with $||\mathbf{v}||_1 = \sum_{k = 1}^n |v_k|$ is a linear amoeba basis of length $n+1$ for $\cA(I)$.
\label{Thm:AmoebaBasesLinearCase}
\end{thm}

\begin{proof}
 Let $\Log|\mathbf{v}| = \mathbf{w} \in \R^n$ and hence $\cA(I) = \{\mathbf{w}\}$. First, notice that every $g_j(\mathbf{v}) = 0$ by construction and since all $g_j$ are linear this implies $\cV(\langle g_0,\ldots,g_n \rangle) = \mathbf{v}$. Since furthermore all $f_j$ and $g_j$ are linear, $I$ and $\langle g_0,\ldots,g_n \rangle$ equal their radical ideals and thus $\langle g_0,\ldots,g_n \rangle = I$.

Furthermore, due to the choice of coefficients, we have for every $g_j(\mathbf{v})$ that the norm of the term in $z_j$ for $j \geq 1$ and the constant term for $j = 0$ equals the sum of the norms of all other terms of $g_j(\mathbf{v})$. More specifically, we have
\begin{eqnarray*}
	\frac{1}{||\mathbf{v}||_1} \cdot \sum_{k = 1}^n |- e^{\textnormal{i} \cdot \arg(v_k)} \cdot v_k | \ = \ \frac{1}{||\mathbf{v}||_1}
\cdot \sum_{k = 1}^n |v_k| \ = \ 1 \quad \text{ for } g_0
\end{eqnarray*}
and
\begin{eqnarray*}
	1 + \sum_{k \in \{1,\ldots,n\} \setminus \{j\}} |- e^{\textnormal{i} \cdot \arg(v_k)} \cdot v_k| \ = \ 1 + ||\mathbf{v}||_1 - |v_j| 
\ = \ \left| \frac{1 + ||\mathbf{v}||_1 - |v_j|}{v_j} \cdot v_j \right| \quad \text{ for } g_j.
\end{eqnarray*}
By Proposition \ref{Prop:LinearCase} this implies $\mathbf{w} \in \ovl{E_{e_j}(g_j)} \cap \partial^{e} \cA(g_j)$ for every $0 \leq j \leq n$. Recall that $E_{e_j}(g_j)$ denotes the component of the complement of the amoeba of $g_j$ with order given by the $j$-th standard vector $e_j$, where $e_0 = \mathbf{0}$. Thus, we have $\mathbf{w} \in \bigcap_{k = 0}^n \cA(g_k)$ and hence $(g_0,\ldots,g_n)$
indeed forms an amoeba basis, if we can show that $\bigcap_{k = 0}^n \cA(g_k)$ contains no points besides $\mathbf{w}$. Assume, there exists another $\mathbf{u} \in (\C^*)^n$ with $\Log|\mathbf{u}| \neq \mathbf{w}$ and $\Log|\mathbf{u}| \in \bigcap_{k = 0}^n \cA(g_k) \subset \R^n$. Then either $||\mathbf{u}||_1 < ||\mathbf{v}||_1$ or there exists an entry $u_j$ with $|u_j| > |v_j|$. But this means either for $g_0(\mathbf{u})$ that $1 > \sum_{k = 1}^n |u_k|$ or
for some $g_j(\mathbf{u})$ that $|u_j| > 1 + \sum_{k \in \{1,\ldots,n\} \setminus \{j\}} |u_k|$. Again, by Proposition \ref{Prop:LinearCase}, this implies that there exists some $j \in \{0,\ldots,n\}$ with $\Log|\mathbf{u}| \in E_{e_j}(g_j)$, which is a contradiction to the assumption that $\Log|\mathbf{u}| \in \bigcap_{k = 0}^n \cA(g_k)$. Since $\mathbf{v}$ is computable with the Gau\ss \ algorithm, the computability of the amoeba
basis follows.
\end{proof}

\begin{exa}
Let $I = \langle f_1,\ldots,f_n \rangle$ be a zero-dimensional ideal such that all $f_j$ are linear and the coefficient matrix of the defining set (without the constant terms) is \struc{\textit{stochastic}},
i.e., the entries of the coefficient vector of each $f_j$ are positive real numbers summing up to 1. Since the constant term is always 1 we have $\cV(I) = \{-\mathbf{1}\}$. Then 
$$\left(1 + \frac{1}{n} \sum_{k = 1}^n z_k, 1 - \sum_{k = 2}^n z_k + n z_1,\ldots,1 - \sum_{k = 1}^{n-1} z_k + n z_n\right)$$
is an amoeba basis for $I$. See Figure \ref{Fig:AmoebaBasisLinear} for the case $n = 2$.
\endenvi
\end{exa}

\begin{figure}
\ifpictures
\includegraphics[width=0.5 \linewidth]{./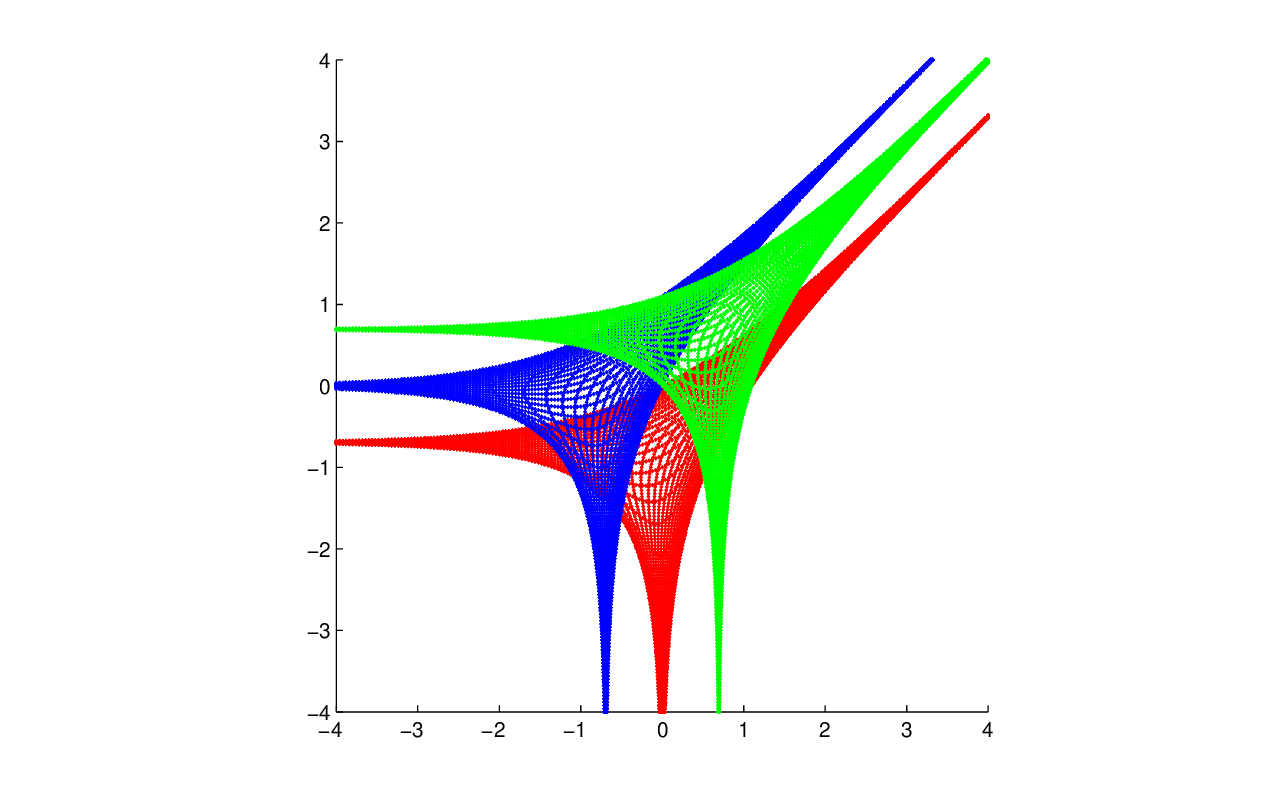}
\fi
\label{Fig:AmoebaBasisLinear}
\caption{The amoeba basis $(1 + 0.5 z_1 + 0.5 z_2, 1 + 2 z_1 - z_2, 1 - z_1 + 2 z_2)$ for the zero-dimensional ideal given by linear polynomials with stochastic $2
\times 2$ coefficient matrix. The figure shows that the amoebas of the basis elements only intersect in the origin, which is exactly $\Log|(-1,-1)|
= \cA(I)$.}
\end{figure}

\medskip

\appendix
\section{Proof of Lemma \ref{Lem:CriticalPointsLogArg}}
\label{Sec:Appendix}

For convenience of the reader we give an own proof of Lemma \ref{Lem:CriticalPointsLogArg}.

\begin{proof}(Lemma \ref{Lem:CriticalPointsLogArg})
We choose a local branch of the holomorphic logarithm $\Log_\C$, and we identify $(\C^*)^n$ with $(\R^2 \setminus \{(0,0)\})^{n}$. We consider $\Log|\cdot|$ and $\Arg$ in an infinitesimal neighborhood $\cB_{\eps}(\mathbf{v})$ around a point $\mathbf{v}$. Thus,  $\Log|\cdot|$ and $\Arg$ behave like linear maps from $\R^{2n}$ to $\R^n$ with
\begin{eqnarray}
	 \cB_{\eps}(\mathbf{v}) & = & \cB_{\eps}(\mathbf{v}) \setminus \Ker(\Log|\cB_{\eps}(\mathbf{v})|) \ \oplus \ \cB_{\eps}(\mathbf{v}) \setminus \Ker(\Arg(\cB_{\eps}(\mathbf{v}))) \label{Equ:Orthogonal} \\
	 & = & \Ker(\Log|\cB_{\eps}(\mathbf{v})|) \ \oplus \ \Ker(\Arg(\cB_{\eps}(\mathbf{v}))). \nonumber
\end{eqnarray}
Let $\mathbf{v} \in \cV(f)$. Consider the tangent space $T_{\mathbf{v}}\cV(f)$ of $\mathbf{v}$ in $\cV(f)$. Since $\cV(f)$ is a variety of complex codimension one, we have after realification $\dim~T_{\mathbf{v}}\cV(f) = 2n-2$.

First, we prove $\dim~\Log|(T_{\mathbf{v}}\cV(f))^\perp| = \dim~\Arg((T_{\mathbf{v}}\cV(f))^\perp)= 1$. Indeed, $(T_{\mathbf{v}}\cV(f))^\perp$
is spanned by the (complex) normal vector $\mathbf{t}$ of $T_{\mathbf{v}}\cV(f)$ and hence we know that $\dim~\Log|(T_{\mathbf{v}}\cV(f))^\perp|$ $+\dim~\Arg((T_{\mathbf{v}}\cV(f))^\perp)= 2$. But we also have that $\dim~\Log|(T_{\mathbf{v}}\cV(f))^\perp| \leq 1$ as $\Log|\mathbf{t}|$ is given by $\RE(\Log_{\C}(\mathbf{t}))$, which is locally a linear map. Thus, the claim follows.

Now, let $\mathbf{v} \in \cV(f)$ be critical under the $\Log|\cdot|$ map. We investigate the situation locally so that $\Log|\cdot|$ can be treated as a linear map. Since $\mathbf{v}$ is critical the Jacobian of $\Log|\cdot|$ at $\mathbf{v}$ does not have full rank, i.e.,
$\dim~\Log|T_{\mathbf{v}}\cV(f)| \leq n-1$. Since furthermore $\dim~\Log|T_{\cV(f)}
(\mathbf{v})^\perp| = 1$ and $\Log|\cdot|$ is surjective it follows that $\dim~\Log|T_{\mathbf{v}}\cV(f)| = n-1$. Hence, for $\Ker(\Log|T_{\mathbf{v}}\cV(f)|) \subset T_{\mathbf{v}}\cV(f)$ it holds that $\dim~\Ker(\Log|T_{\mathbf{v}}\cV(f)|)$ $ = n-1$. Thus, we can choose an
orthogonal basis $B =(b_1,\ldots,b_{2n-2}) \subset \R^{2n}$ of $T_{\mathbf{v}}\cV(f)$ with $b_1,\ldots,b_{n-1} \in \Ker(\Log|T_{\mathbf{v}}\cV(f)|)$. 

Due to \eqref{Equ:Orthogonal} we have $\Ker(\Log|T_{\mathbf{v}}\cV(f)|) \subseteq \Arg(T_{\mathbf{v}}\cV(f))$, i.e., in particular,
$\Arg|_{\langle b_1,\ldots,b_{n-1}\rangle}$ is an immersion. Moreover, as $\dim~\Log|T_{\mathbf{v}}\cV(f)| = n-1$, also
$\Log|_{\langle b_n,\ldots,b_{2n-2}\rangle}$ is an immersion. Thus, with \eqref{Equ:Orthogonal} $b_n,\ldots,b_{2n-2}$ is contained in $\Ker(\Arg(T_{\mathbf{v}}\cV(f)))$, i.e., $\dim~\Ker(\Arg(T_{\mathbf{v}}\cV(f))) = n-1$ and therefore $\dim~\Arg(T_{\mathbf{v}}\cV(f)) = n-1$. Hence, $\mathbf{v}$ is critical under the $\Arg$ map. The converse of this argument follows analogously.
\end{proof}

\bibliographystyle{amsplain}
\bibliography{AmoebaBoundary}

\end{document}